\DeclareMathAlphabet{\mathpzc}{OT1}{pzc}{m}{it}
\newcommand{\mo}{{\mathcal M}}
\newcommand{\ca}{{\mathcal C}}
\newcommand{\otb}{{\overline{\otimes}}}
\newcommand{\Do}{{\mathcal D}}
\definecolor{verde}{rgb}{0.,0.7,0.}
\definecolor{indigo}{rgb}{.18, .34, .78}
\definecolor{indigo1}{rgb}{.18, .24, .78}
\definecolor{indigo2}{rgb}{.18, .14, .78}
\definecolor{indigo3}{rgb}{.18, 0., .78}
\definecolor{rojo}{rgb}{1,0,0}
\definecolor{negro}{rgb}{0,0,0}
\definecolor{lila}{rgb}{.46, .16, .78}
\definecolor{lila1}{rgb}{.46, .16, .86}
\definecolor{lila2}{rgb}{.56, .16, .86}
	\definecolor{lila3}{rgb}{.63, .16, .78}
\definecolor{lila4}{rgb}{.7, .16, .78}
\definecolor{lila5}{rgb}{.78, .26, .78}
\definecolor{lila6}{rgb}{.6, 0., .78}
\theoremstyle{plain}
\newtheorem{thm}{Theorem}[section]
\newtheorem{lma}[thm]{Lemma}
\newtheorem{cor}[thm]{Corollary}
\newtheorem{defn}[thm]{Definition}
\newtheorem{rem}[thm]{Remark}
\newtheorem{prop}[thm]{Proposition}
\newcommand{\qed}{\hfill\quad\fbox{\rule[0mm]{0,0cm}{0,0mm}}  \par\bigskip}
\def\ol{\overline}
\newcommand{\x}{\mbox{-}}
\newcommand{\R}{{\mathcal R}}
\newcommand{\Cc}{{\mathfrak C}}
\newcommand{\Del}{\boxtimes}
\newcommand{\comp}{\circ}
\newcommand{\iso}{\cong}
\newcommand{\ot}{\otimes}
\newcommand{\C}{{\mathcal C}}
\newcommand{\M}{{\mathcal M}}
\newcommand{\D}{{\mathcal D}}
\newcommand{\F}{{\mathcal F}}
\newcommand{\G}{{\mathcal G}}
\newcommand{\Oo}{{\mathcal O}}
\newcommand{\HH}{{\mathcal H}}
\newcommand{\N}{{\mathcal N}}
\newcommand{\A}{{\mathcal A}}
\newcommand{\E}{{\mathcal E}}
\def\ul{\underline}
\def\dul#1{\underline{\underline{#1}}}
\newcommand{\Ev}{\rm Ev}
\newcommand{\Ll}{{\mathcal L}}
\newcommand{\Pp}{{\mathcal P}}
\newcommand{\Qq}{{\mathcal Q}}
\newcommand{\crta}{\overline}
\newcommand{\ev}{\rm ev}
\newcommand{\Id}{\operatorname {Id}}
\newcommand{\Ker}{\operatorname {Ker}}
\newcommand{\Hom}{\operatorname {Hom}}
\newcommand{\Fun}{\operatorname {Fun}}
\def\Nn{{\mathbb N}}
\newcommand{\Pic}{\operatorname{Pic}}
\newcommand{\BrPic}{\operatorname{BrPic}}
\newcommand{\Mod}{\operatorname{Mod}}
\newcommand{\Bimod}{\operatorname{Bimod}}
\newcommand{\Inv}{\operatorname{Inv}}
\newcommand{\AUT}{\operatorname {AUT}}
\newcommand{\Nat}{\operatorname{Nat}}
\newcommand{\Pseud}{\operatorname {Pseud}}
\newcommand{\im}{{\rm Im}\,}
\newcommand{\cref}[1]{Cor.~\ref{c:#1}}
\newcommand{\lelabel}[1]{\label{le:#1}}
\newcommand{\leref}[1]{Lemma~\ref{le:#1}}
\newcommand{\eqlabel}[1]{\label{eq:#1}}
\newcommand{\equref}[1]{(\ref{eq:#1})}
\newcommand{\thlabel}[1]{\label{th:#1}}
\newcommand{\thref}[1]{Theorem~\ref{th:#1}}
\newcommand{\delabel}[1]{\label{de:#1}}
\newcommand{\deref}[1]{Definition~\ref{de:#1}}
\newcommand{\prlabel}[1]{\label{pr:#1}}
\newcommand{\prref}[1]{Proposition~\ref{pr:#1}}
\newcommand{\colabel}[1]{\label{co:#1}}
\newcommand{\coref}[1]{Corollary~\ref{co:#1}}
\newcommand{\rmlabel}[1]{\label{rm:#1}}
\newcommand{\rmref}[1]{Remark~\ref{rm:#1}}
\newcommand{\selabel}[1]{\label{se:#1}}
\newcommand{\seref}[1]{Section~\ref{se:#1}}
\newcommand{\sslabel}[1]{\label{ss:#1}}
\newcommand{\ssref}[1]{Subsection~\ref{ss:#1}}
\begin{document}

\title{Eilenberg-Watts Theorem for 2-categories and quasi-monoidal structures for module categories over bialgebroid categories}

\author{Bojana Femi\'c \vspace{6pt} \\
{\small Facultad de Ingenier\'ia, \vspace{-2pt}}\\
{\small  Universidad de la Rep\'ublica} \vspace{-2pt}\\
{\small  Julio Herrera y Reissig 565, \vspace{-2pt}}\\
{\small  11 300 Montevideo, Uruguay}}

\date{}
\maketitle

\begin{abstract}
We prove Eilenberg-Watts Theorem for 2-categories of the representation categories $\C\x\Mod$ of finite tensor categories $\C$. For a consequence we 
obtain that any autoequivalence of $\C\x\Mod$ is given by tensoring with 
a representative of some class in the Brauer-Picard group $\BrPic(\C)$. We introduce bialgebroid categories over $\C$ and a cohomology over a symmetric 
bialgebroid category. This cohomology turns out to be a generalization of the one we developed in a previous paper and moreover, an analogous 
Villamayor-Zelinsky sequence exists in this setting. In this context, for a symmetric bialgebroid category $\A$, we interpret the middle cohomology group 
appearing in the third level of the latter sequence. We obtain a group of quasi-monoidal structures on the representation category $\A\x\Mod$. 

\bigbreak
{\em Mathematics Subject Classification (2010): 19D23, 18D10, 18D35, 18D05, 19A99.}

{\em Keywords: Brauer-Picard group, Picard group, finite tensor category, braided monoidal category, cohomology.}
\end{abstract}

\section{Introduction}

In our previous paper \cite{Femic2} we introduced a cohomology over a symmetric finite tensor category $\C$ and we constructed an infinite exact sequence 
a la Villamayor-Zelinsky which contains three types of cohomology groups. These three types of groups repeat periodically in the sequence and we consider 
the sequence so that in each level there are three groups of different types. In \cite{Femic3} we interpreted the middle term in the second level of this sequence, 
obtaining the group of quasi $\C$-coring categories. These results generate to the setting of symmetric finite tensor categories our results from \cite{CF}, 
which were done in the context of commutative algebras over a commutative ring $R$. In the author's Ph.D. thesis \cite{th} and a paper which emerged out of it together with Stefaan Caenepeel 
we proceeded the former construction to commutative bialgebroids over $R$. The idea of the present paper is to bring these constructions to the context of 
symmetric finite tensor categories and investigate how far we may get in this setting. 

We introduce bialgebroid categories over a finite tensor category $\C$. In \cite{Femic3} we introduced $\C$-comonoidal categories as coalgebra objects in the 2-category 
$(\C\x\Bimod, \Del_{\C}, \C)$. We define a bialgebroid category $\A$ as a $\C$-bimodule category which is monoidal and $\C$-comonoidal with certain compatibility conditions. 
Intuitively, these conditions require that the commultiplication and the counit functors from the comonoidal structure are monoidal, though some caution is needed, as 
$\A\Del_{\C}\A$ in general is not a monoidal category. To tackle this issue, we present two definitions, one for a non-braided and the other for a braided category $\A$, which 
obviously coincide in the braided setting. 
We concentrate on the case where both $\C$ and $\A$ are symmetric as monoidal categories. 
Then we introduce a new cohomology over $\A$, called Harrison cohomology, which for $\A=\C\Del\C$ coincides with the Amitsur 
cohomology over $\C$ from \cite{Femic2}. In particular, we have that $H^n(\A, P)=H^{n+1}(\C, P)$, where $P$ is a suitable functor from symmetric tensor categories 
to abelian groups (or symmetric monoidal categories). Thus $n$-cocycles over $\A$ are $n+1$-cocycles over $\C$. 
 We record that there exists an analogous infinite exact sequence as in the latter article, which is constructed following {\em mutatis mutandis} 
the ideas from therein. In order to interpret the middle cohomology group in the third level of this sequence, we consider quasi-monoidal structures on the bicategory of representations 
$\A\x\Mod$. These are tensor products on 0-cells with an associativity constraint which satisfies the pentagonal axiom up to a natural equivalence. In general these associativity constraints 
are not identities and this is why we deal indeed with a bicategory, rather than a 2-category $\A\x\Mod$. We call these structures ``quasi-monoidal'' as we are not interested at this point 
in the unit object, nor the corresponding unity constraints and other axioms for a monoidal bicategory structure on $\A\x\Mod$. 

We generate  quasi-monoidal structures on $\A\x\Mod$ using (additive) autoequivalences of $\A\Del_{\C}\A\x\Mod$. For this purpose we develope an Eilenberg-Watts-type Theorem for 2-categories. 
We do this in two stages. Firstly, in \thref{pre-EW} 
we start with the (2-)category $\Pseud(\C\x\Mod, \D\x\Mod)$ of pseudo-functors $\C\x\Mod\to\D\x\Mod$ and construct a functor to the (2-)category $\D\x\C\x\Bimod$. We show how the 
hexagonal coherence diagram for the monoidal structure of a pseudo-functor $\F:\C\x\Mod\to\D\x\Mod$ corresponds to the pentagonal axiom for the right $\C$-module category structure 
of $\F(\C)$, while the pentagonal coherence for a pseudo-natural transformation $\omega:\F\to\G$ of pseudo-functors $\F, \G: \C\x\Mod\to\D\x\Mod$ corresponds to the pentagonal 
axiom for the right $\C$-linearity of the functor $\omega(\C):\F(\C)\to\G(\C)$. The functor $\Pseud(\C\x\Mod, \D\x\Mod)\to\D\x\C\x\Bimod$ induced in this way is cleary essentially surjective, 
though it is not full and fails to be an equivalence of categories. In \thref{EW} we restrict to the category $2\x\Fun_{cont}(\D\x\Mod, \C\x\Mod)$ of 2-functors which preserve 
arbitrary coproducts and cokernels and we prove that it is equivalent to $\D\x\C\x\Bimod$. For a consequence, taking $\D=\C$ and considering invertible cells on both levels, we 
deduce that every autoequivalence of $\C\x\Mod$ is equivalent to $\M\Del_{\C}-$ for some $[\M]\in\BrPic(\C)$, where $\BrPic(\C)$ denotes the Brauer-Picard group 
introduced in \cite{ENO}. 

Finally, we prove that the quasi-monoidal structures on $\A\x\Mod$ that we study form a monoidal category, whose Grothendieck group is isomorphic to the group $Z^2(\A, \dul{\Pic})$ 
of 2-cocycles in our Harrison cohomology with values in $\dul{\Pic}(\A\Del_{\C}\A)$. For $\A=\C\Del\C$ we get an interpretation of the 3-cocycles over $\C$. 
Given a braided finite tensor category $\C$, we denote by $\dul{\Pic}(\C)$ the category of invertible one-sided $\C$-bimodule categories. 
One-sided bimodule categories were studied in \cite{DN}. The Grothendieck group $\Pic(\C)$ of $\dul{\Pic}(\C)$ is a subgroup of 
$\BrPic(\C)$. In \cite{Femic2} we proved that when $\C$ is symmetric, the category $(\dul{\Pic}(\C), \Del_{\C}, \C)$ is symmetric monoidal and thus the group $\Pic(\C)$ is abelian. 
This fact underlies our construction of the Amitsur cohomology over symmetric finite tensor categories in \cite{Femic2} and the Harrison cohomology over symmetric bialgebroid 
categories in the present paper. Due to \cite[Proposition 2.7]{EO} we know that any symmetric finite tensor category $\C$ is equivalent to a category of finite-dimensional representations of a 
finite-dimensional triangular weak quasi-Hopf algebra $H$. Moreover, by \cite[Theorem 5.1.1]{AEG}, \cite[Theorem 4.3]{EG}, if $H$ is a Hopf algebra and the underlying field is 
algebraically closed and of characteristic zero, $H$ is the Drinfel'd twist of a modified supergroup algebra. 

\medskip

In the coming section we set some preliminary notions and results. Here we deduce a relation between the composition of successive morphisms acting between certain invertible objects 
in $\C$, on the one hand-side, and their tensor product, on the other. This will be of big importance in \thref{asoc-coc}. \seref{EW} is dedicated to the Eilenberg-Watts Theorem 
for 2-categories, \seref{Bialgebroids} to the bialgebroid categories and \seref{Harrison} to our version of the Hsrrison cohomology over symmetric bialgebroid categories $\A$. 
In the last section we study certain quasi-monoidal structures in $\A\x\Mod$. In \thref{asoc-coc} we prove how the coherence pentagon for the associativity constraints correspond to the 
2-cocycle condition over $\dul{\Pic}(\A\Del_{\C}\A)$, whereas in \thref{monoidal equiv} we prove that the latter induces a monoidal equivalence between the category of such 
quasi-monoidal structures and of the 2-cocycles. The obtained equivalence of categories finally yields the corresponding group isomorphism.

\section{Preliminaries}

We assume that the reader is familiar with the notions of a finite tensor category, a left/right/bimodule category over a finite tensor category, (bi)module functors,
Deligne tensor product of finite abelian categories, tensor product of bimodule categories. For the basics on the subject we refer to \cite{EO}, \cite{ENO}, \cite{Gr}, \cite{EGNObook}.
All categories will be finite, and $k$-linear and all functors will be $k$-linear, where $k$ throughout will be an algebraically closed field of characteristic zero. 
When there is no confusion we will denote the identity functor on a category $\M$ by $\M$. 

For basic notions on bicategories we refer to \cite{Be, Bo}. The definitions recalled in \cite{Ga}, \cite[Section 1.1]{Neu} are sufficient for the results
in this paper. The corresponding notions of ``functors'' and ``natural transformations'' for bicategories are ``pseudo-functors'' and
``pseudo-natural transformations''. 
When the associativity and unity constraints for 0-cells in a bicategory are identities,
the bicategory is said to be a 2-category. For a finite tensor category $\C$ we have the 2-category $\C\x\Mod$ whose 0-cells are $\C$-module categories,
1-cells are left $\C$-module functors and 2-cells are left $\C$-natural transformations.

A module category is said to be strict if the natural transformations for the associativity and unit constraints for the action 
are identities. A module functor is said to be strict if the natural transformation for the linearity of the functor 
is identity.

\medskip

As we observed in \cite[Section 3]{Femic2}, given a tensor functor $\eta:\C\to\E$ there is a right adjoint functor to $\E\Del_{\C}-: \C\x\Mod\to\E\x\Mod$
which we call restriction of scalars functor $\R: \E\x\Mod\to\C\x\Mod$ defined as follows. Given a left $\E$-module category $\M$ it is a left $\C$-module category
via $\eta$, that is, for $M\in\M, C\in\C$ it is $C\crta\ot M=\eta(C)\crta\ot M$. We will use the following notation:
\begin{equation} \eqlabel{res}
\R(\M)={}_{\eta}\M.
\end{equation}

A right $\ca$-module category $\M$ gives rise to a left $\C$-module category $\mo^{op}$ with the action
given by \equref{left op} and associativity isomorphisms $m^{op}_{X,Y,M}= m_{M,{}^*Y, {}^*X}$ for all $X, Y\in \ca, M\in \mo$.
Similarly, a left $\ca$-module category $\mo$ gives rise to a right $\ca$-module category $\mo^{op}$ with the action 
given via \equref{right op}. Here ${}^*X$ denotes
the left dual object and $X^*$ the right dual object for $X\in\C$. If $\mo$ is a $(\ca,\Do)$-bimodule category then $\mo^{op}$ is a
$(\Do,\ca)$-bimodule category and $(\mo^{op})^{op}\iso\M$ as $(\ca,\Do)$-bimodule categories. \vspace{-0,7cm}
\begin{center}
\begin{tabular}{p{4.8cm}p{1,2cm}p{5.4cm}}
\begin{eqnarray}  \eqlabel{left op}
X\crta\ot^{op}M=M\otb {}^*X
\end{eqnarray}  & &
\begin{eqnarray} \eqlabel{right op}
M\crta\ot^{op}X=X^*\crta\ot M
\end{eqnarray}
\end{tabular}
\end{center} \vspace{-0,7cm}

For a $\C\x\D$-bimodule functor $\F:\M\to\N$ the $\D\x\C$-bimodule functor ${}^{op}\F: {}^{op}\N\to{}^{op}\M$ is given by
${}^{op}\F=\sigma_{\M, \C}^{-1}\comp\F^*\comp\sigma_{\N, \C}$. Here $\F^*: \Fun_{\C}(\N, \C)\to\Fun_{\C}(\M, \C)$ is given by
$\F^*(G)=G\comp\F$ and $\sigma$ is the equivalence proved in \cite[Lemma 4.3]{Femic2} and given by
$\sigma_{\M,\N}: \M\Del_{\C}{}^{op}\N\to\Fun(\N,\M)_{\C}, \sigma(M\Del_{\C}N)=M\crta\ot \ol\Hom_{\N}(-, N)$. The object $\ol\Hom_{\N}(N', N)\in\C$ is
the inner hom-object. If $\F$ is an equivalence, it is $(\F^*)^{-1}=(\F^{-1})^*=-\comp\F^{-1}$ and consequently: $({}^{op}\F)^{-1}={}^{op}(\F^{-1})$.

\medskip

A $(\ca, \Do)$-bimodule category $\mo$ is called \emph{invertible} \cite{ENO} if there are  equivalences of bimodule categories
$$\mo^{op}\boxtimes_{\ca} \mo\simeq \Do, \quad \mo\boxtimes_{\Do} \mo^{op}\simeq \ca.$$
The group of equivalence classes of exact invertible module categories over a finite tensor category $\C$ is called the Brauer-Picard group. It was
introduced in \cite{ENO} and it is denoted by $\BrPic(\C)$.

When $\C$ is braided, then every left $\C$-module category can be made into a right and a $\C$-bimodule category, by putting:
$M\crta\ot X=X\crta\ot M$, \cite[Section 2.8]{DN}. This kind of $\C$-bimodule categories is called {\em one-sided $\C$-bimodule categories.} They form a monoidal
subcategory $(\C^{br}\x\Mod, \Del_{\C}, \C)$ of $(\C\x\dul\Mod, \Del_{\C}, \C)$. As we proved in \cite[Proposition 3.4]{Femic2} when $\C$ is symmetric, $(\C^{br}\x\Mod, \Del_{\C}, \C)$ is symmetric.

For a braided finite tensor category $\C$ we will denote by $\dul\Pic(\C)$ the monoidal category $(\dul\Pic(\C), \Del_{\C}, \C)$ of exact invertible
one-sided $\C$-bimodule categories. The Grothendieck group of $\dul\Pic(\C)$, that is, the Picard group of equivalence classes of exact invertible
one-sided $\C$-bimodule categories, is denoted by $\Pic(\C)$. It is a subgroup of $\BrPic(\C)$, \cite[Section 4.4]{ENO}, \cite[Section 2.8]{DN}. In view of the
above said, when $\C$ is symmetric, the group $\Pic(\C)$ is clearly abelian.

\medskip

In \cite[Lemma 4.9]{Femic2} and \cite[Lemma 2.7]{Femic3} respectively we proved:

\begin{lma}\lelabel{basic lema}
Let 
$\F, \G: \M\to\N$ be $\D\x\C$-bimodule functors and let $\Pp$ be an invertible $\C$-bimodule category.
\begin{enumerate}
\item It is $\F\Del_{\C}\Id_{\Pp}=\G\Del_{\C}\Id_{\Pp}$ if and only if $\F=\G$.
\item If $\HH: \Pp\to\Ll$ is a $\C$-bimodule equivalence functor, it is $\F\Del_{\C}\HH=\G\Del_{\C}\HH$ if and only if $\F=\G$.
\end{enumerate}
\end{lma}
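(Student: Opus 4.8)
The plan is to reduce both statements to the same kind of cancellation argument and to treat (2) as a consequence of (1) together with the properties of the equivalence $\sigma$ recalled in the preliminaries. The key point is that the Deligne tensor product over $\C$ of bimodule functors is a well-behaved bifunctorial operation and that invertibility of $\Pp$ provides a two-sided inverse module category, so that tensoring with $\Pp$ can be undone.

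First I would prove (1). The ``if'' direction is trivial, so assume $\F\Del_{\C}\Id_{\Pp}=\G\Del_{\C}\Id_{\Pp}$ as $\D\x\C$-bimodule functors $\M\Del_{\C}\Pp\to\N\Del_{\C}\Pp$. Since $\Pp$ is invertible there is a $\C$-bimodule equivalence $\Pp^{op}\Del_{\C}\Pp\simeq\C$ (and likewise on the other side); I would tensor the given equality on the right with $\Id_{\Pp^{op}}$, use the pseudo-functoriality of $-\Del_{\C}-$ to rewrite $(\F\Del_{\C}\Id_{\Pp})\Del_{\C}\Id_{\Pp^{op}}\cong\F\Del_{\C}\Id_{\Pp\Del_{\C}\Pp^{op}}$ up to the canonical coherence isomorphisms, and then transport along the equivalence $\Pp\Del_{\C}\Pp^{op}\simeq\C$ (noting $\M\Del_{\C}\C\simeq\M$, $\N\Del_{\C}\C\simeq\N$ as bimodule categories). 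Because these transport isomorphisms are the same for $\F$ and for $\G$, the two functors $\F$ and $\G$ become identified. The only delicate part is bookkeeping: one must check that the coherence 2-cells introduced by the associativity of $\Del_{\C}$ and by the equivalence $\Pp\Del_{\C}\Pp^{op}\simeq\C$ are identical on the $\F$-side and the $\G$-side, so that equality of the ``tensored-up'' functors really yields $\F=\G$ on the nose rather than merely $\F\cong\G$. Here one uses that these 2-cells depend only on the data $\M,\N,\Pp,\C$ and not on the particular functor being tensored.

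Next I would deduce (2) from (1). Given a $\C$-bimodule equivalence $\HH:\Pp\to\Ll$, write $\HH=\Id_{\Ll}\comp\HH$; more usefully, from $\F\Del_{\C}\HH=\G\Del_{\C}\HH$ compose on the right with $\Id_{\M}\Del_{\C}\HH^{-1}$, using again pseudo-functoriality of $-\Del_{\C}-$ to identify $(\F\Del_{\C}\HH)\comp(\Id_{\M}\Del_{\C}\HH^{-1})$ with $\F\Del_{\C}(\HH\comp\HH^{-1})=\F\Del_{\C}\Id_{\Ll}$ up to canonical coherence, and similarly on the $\G$-side. This reduces the hypothesis to $\F\Del_{\C}\Id_{\Ll}=\G\Del_{\C}\Id_{\Ll}$, and since $\HH$ is an equivalence between invertible module categories, $\Ll$ is itself invertible, so part (1) applies and gives $\F=\G$. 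Alternatively, and perhaps more cleanly, one observes directly that $\HH$ being an equivalence means $\F\Del_{\C}\HH$ and $\F$ are related by the equivalence $\Id\Del_{\C}\HH$, so that $\F\Del_{\C}\HH=\G\Del_{\C}\HH$ forces $\F\Del_{\C}\Id_{\Pp}=\G\Del_{\C}\Id_{\Pp}$, again invoking (1).

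I expect the main obstacle to be purely the coherence bookkeeping sketched above: making precise that tensoring an equality of $1$-cells by $\Id_{\Pp^{op}}$ (resp. by $\Id\Del\HH^{-1}$) and transporting through the invertibility equivalences is a faithful operation on the relevant $1$-cells, i.e.\ that no genuine $2$-isomorphism ambiguity is introduced. This is really a statement that the functor $-\Del_{\C}\Id_{\Pp}$ (or $-\Del_{\C}\HH$) on hom-categories is faithful on objects, which follows once one exhibits an explicit quasi-inverse-on-objects via $\Pp^{op}$ and checks the two composites are the identity up to the canonical coherence data; since the paper has already set up $\sigma$, the equivalence $\M\Del_{\C}{}^{op}\N\simeq\Fun(\N,\M)_{\C}$, and the standard facts about invertible bimodule categories from \cite{ENO}, all the needed machinery is available and the verification is routine though notationally heavy.
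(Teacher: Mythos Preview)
The paper does not actually prove this lemma in the text: it is imported from \cite[Lemma 4.9]{Femic2} and \cite[Lemma 2.7]{Femic3}, so there is no in-paper argument to compare against. Your approach --- tensor on the right by $\Id_{\Pp^{op}}$ and transport through the invertibility equivalence $\Pp\Del_{\C}\Pp^{op}\simeq\C$ for part (1), then precompose with $\Id\Del_{\C}\HH^{-1}$ to reduce (2) to (1) --- is the standard cancellation argument and is correct; it is also presumably what the cited proofs do.

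One remark: you are spending considerable effort worrying about recovering strict equality $\F=\G$ rather than a mere isomorphism $\F\cong\G$, and about whether the coherence $2$-cells might introduce spurious ambiguity. In this paper (and in \cite{Femic2,Femic3}) the symbol ``$=$'' between functors of bimodule categories is used throughout to mean ``naturally isomorphic'' (see for instance how the lemma is invoked in the proof of \thref{asoc-coc}, where the conclusion is always phrased as ``there is a natural equivalence''). Under that reading, the bookkeeping you flag as the ``main obstacle'' disappears: tensoring an isomorphism $\F\Del_{\C}\Id_{\Pp}\simeq\G\Del_{\C}\Id_{\Pp}$ by $\Id_{\Pp^{op}}$ and conjugating by the unit equivalence $\M\Del_{\C}\Pp\Del_{\C}\Pp^{op}\simeq\M$ immediately yields $\F\simeq\G$, with no need to argue that the coherence data ``depend only on $\M,\N,\Pp,\C$''. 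So your proof is right, but simpler than you make it sound.
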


\begin{lma} \lelabel{alfa dagger}
Let $\C$ be a finite tensor category and $\M$ an exact $\C$-bimodule category and let $\alpha: \M\to\C$ be a $\C$-bimodule equivalence. Define
$\alpha^{\dagger}=\crta\ev(id_{{}^{op}\M}\Del_{\C}\alpha^{-1}): {}^{op}\M\to\C$.
Then:
\begin{enumerate}
\item $\crta\ev=\alpha^{\dagger} \Del_{\C}\alpha$ as right $\C$-linear functors.
\item $\alpha^{\dagger}=({}^{op}\alpha)^{-1}$ as $\C$-bimodule functors.
\end{enumerate}
\end{lma}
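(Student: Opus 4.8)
The plan is to derive (1) from the monoidal $2$-categorical structure of $(\C\x\Bimod,\Del_\C,\C)$ (associativity, unit and interchange for $\Del_\C$) together with $\alpha^{-1}\comp\alpha\iso\id_\M$ and the right $\C$-linearity of $\alpha^{-1}$, and then (2) from (1), \leref{basic lema}(2), and an unraveling of the definitions of $\crta\ev$ and of the operation ${}^{op}(-)$ in terms of the equivalence $\sigma$ recalled above. For (1), writing $\alpha^\dagger=\crta\ev\comp(\id_{{}^{op}\M}\Del_\C\alpha^{-1})$ as a functor ${}^{op}\M\Del_\C\C\to\C$ and applying the interchange law,
\[
\alpha^\dagger\Del_\C\alpha=\bigl(\crta\ev\comp(\id_{{}^{op}\M}\Del_\C\alpha^{-1})\bigr)\Del_\C\alpha=(\crta\ev\Del_\C\id_\C)\comp\bigl((\id_{{}^{op}\M}\Del_\C\alpha^{-1})\Del_\C\alpha\bigr).
\]
By associativity of $\Del_\C$ the second factor is $\id_{{}^{op}\M}\Del_\C(\alpha^{-1}\Del_\C\alpha)$, and, after the canonical unit identifications $\C\Del_\C\M\iso\M\iso\M\Del_\C\C$, the functor $\alpha^{-1}\Del_\C\alpha$ becomes $M\mapsto\alpha^{-1}(\mathbf 1)\crta\ot\alpha(M)\iso\alpha^{-1}(\mathbf 1\ot\alpha(M))\iso\alpha^{-1}(\alpha(M))\iso M$ (the first isomorphism because $\alpha^{-1}$ is a right $\C$-module functor), hence is isomorphic to $\id_\M$. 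Therefore $(\id_{{}^{op}\M}\Del_\C\alpha^{-1})\Del_\C\alpha\iso\id_{{}^{op}\M\Del_\C\M}$, while $\crta\ev\Del_\C\id_\C$ is $\crta\ev$ up to $\C\Del_\C\C\iso\C$, and (1) follows.

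For (2), by the identity $({}^{op}\F)^{-1}={}^{op}(\F^{-1})$ recalled above (applicable since $\alpha$ is an equivalence) it suffices to prove $\alpha^\dagger={}^{op}(\alpha^{-1})$. Both sides are $\C$-bimodule functors ${}^{op}\M\to{}^{op}\C\iso\C$ --- $\alpha^\dagger$ because it is a composite of the bimodule functors $\crta\ev$ and $\id_{{}^{op}\M}\Del_\C\alpha^{-1}$ --- so by \leref{basic lema}(2) applied to the $\C$-bimodule equivalence $\alpha\colon\M\to\C$ it is enough to show $\alpha^\dagger\Del_\C\alpha={}^{op}(\alpha^{-1})\Del_\C\alpha$, which by (1) amounts to the identity $\crta\ev={}^{op}(\alpha^{-1})\Del_\C\alpha$. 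I would verify the latter by unwinding definitions through $\sigma$: under the equivalence ${}^{op}\M\iso\Fun(\M,\C)_\C$ coming from $\sigma$, the canonical evaluation $\crta\ev$ becomes $(G\Del_\C M)\mapsto G(M)$, while ${}^{op}(\alpha^{-1})$, which by definition is of the shape $\sigma^{-1}\comp(\alpha^{-1})^{*}\comp\sigma$ with $(\alpha^{-1})^{*}(G)=G\comp\alpha^{-1}$, sends the object corresponding to $G$ to $G\comp\alpha^{-1}$, i.e. to $G(\alpha^{-1}(\mathbf 1))$; hence $\bigl({}^{op}(\alpha^{-1})\Del_\C\alpha\bigr)(G\Del_\C M)\iso G(\alpha^{-1}(\mathbf 1))\ot\alpha(M)\iso G(\alpha^{-1}(\mathbf 1)\crta\ot\alpha(M))\iso G(M)$, which is exactly $\crta\ev$. (Equivalently, this is the naturality of $\sigma$ from \cite[Lemma 4.3]{Femic2} specialized to $\alpha$; it also gives $\alpha^\dagger={}^{op}(\alpha^{-1})$ directly, both acting by $G\mapsto G(\alpha^{-1}(\mathbf 1))$.)

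The computations are routine; the delicate part is keeping track of the coherence isomorphisms for $\Del_\C$ --- the unit identifications $\C\Del_\C\M\iso\M$, $\M\Del_\C\C\iso\M$, $\C\Del_\C\C\iso\C$, $\C\Del_\C{}^{op}\M\iso{}^{op}\M$ and their compatibility with the bimodule structure of $\alpha$ --- and the one genuinely non-formal ingredient is the description of $\crta\ev$ and of ${}^{op}(-)$ through $\sigma$, read off from \cite[Lemma 4.3]{Femic2}. I expect that bookkeeping to be the main obstacle, though it is an unraveling of definitions rather than a new idea.
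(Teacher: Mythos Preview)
The paper does not prove this lemma here: it is quoted from \cite[Lemma~2.7]{Femic3} (see the sentence ``In \cite[Lemma 4.9]{Femic2} and \cite[Lemma 2.7]{Femic3} respectively we proved:'' immediately preceding the two lemmas). So there is no in-paper proof to compare against.

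That said, your argument is correct and essentially the natural one. For (1) the interchange/associativity/unit manipulation is exactly right; the only content is identifying $\alpha^{-1}\Del_\C\alpha$ with $\id_\M$ under the unit equivalences, and your computation $\alpha^{-1}(\mathbf 1)\crta\ot\alpha(M)\iso\alpha^{-1}(\alpha(M))\iso M$ via right $\C$-linearity of $\alpha^{-1}$ does this cleanly. For (2) your reduction via \leref{basic lema}(2) to $\crta\ev={}^{op}(\alpha^{-1})\Del_\C\alpha$ is legitimate (note that $\M$, being $\C$-bimodule equivalent to $\C$, is invertible, so the hypothesis of that lemma is met), and your verification through $\sigma$ is the intended unwinding: under $\sigma_{\C,\M}\colon{}^{op}\M\simeq\Fun(\M,\C)_\C$ the evaluation becomes $(G,M)\mapsto G(M)$ and ${}^{op}(\alpha^{-1})$ becomes $G\mapsto G\comp\alpha^{-1}$, after which right $\C$-linearity of $G$ gives $G(\alpha^{-1}(\mathbf 1))\ot\alpha(M)\iso G(M)$. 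One small notational point: the paper writes $\sigma_{\M,\C}$ in the formula for ${}^{op}\F$, but what is actually meant (and what you use) is the instance $\sigma_{\C,\M}\colon\C\Del_\C{}^{op}\M\to\Fun(\M,\C)_\C$; this is just a labeling convention and does not affect your computation. Your closing caveat about coherence bookkeeping is accurate but, as you say, it is routine.
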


\subsection{Composition versus tensor product of morphisms in $\dul{\Pic}(\C)$ for $\C$ symmetric} \sslabel{1st subsection} 

In the developing paper ``Harrison cohomology over commutative Hopf algebroids'' by Stefaan Caenepeel and the author we proved some
results on symmetric Picard groupoids that will be useful here. In what follows we prove them in a different form.

\medskip

Let $\C$ be a braided monoidal category with braiding $\Phi$ and unit object $I$. Let $X^*$ be a right dual object for $X\in\C$ with morphisms 
$\crta{ev}: X^*\ot X\to I$ and $\crta{coev}: I\to X\ot X^*$. We will use the following notation:  
$$\crta{ev}=
\gbeg{3}{1}
\got{1}{X^*} \got{1}{X} \gnl
\gev \gnl
\gob{1}{}
\gend
\qquad\textnormal{and}\qquad
\crta{coev}=
\gbeg{3}{3}
\got{1}{} \gnl
\gdb \gnl
\gob{1}{X} \gob{2}{\hspace{-0,2cm}X^*.}
\gend 
$$
Then ${}^*X=X^*$ with morphisms
\begin{equation} \eqlabel{coev new}
ev=\crta{ev}\comp\Phi_{X, X^*}: X\ot {}^*X\to I \qquad \textnormal{and} \qquad
coev=\Phi^{-1}_{X, X^*}\comp\crta{coev}: I\to {}^*X\ot X
\end{equation}
is a left dual object for $X$ in $\C$. Recall that invertible objects in $\C$ are such objects $X\in\C$ for which there exists $X'\in\C$ so that $X\ot X'\iso I$.

\begin{lma} \lelabel{special}
The following are equivalent:
\begin{enumerate}
\item an invertible object $X\in\C$ has a right dual and it is $\crta{coev}\comp\crta{ev}=\Phi_{X^*, X}$;
\item for an invertible object $X\in\C$ it is $\Phi_{X, X}=\Id_{X\ot X}$ (equivalently: $\Phi_{X, X}^{-1}=\Id_{X\ot X}$).
\end{enumerate}
\end{lma}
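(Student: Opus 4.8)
The plan is to prove the two implications separately, working entirely with string diagrams in the braided category $\C$ and exploiting that $X$ is invertible. First I would set up the key reduction: since $X$ is invertible, $X\ot X'\iso I$ for some $X'$, and a standard argument shows that an invertible object automatically has a right (and left) dual, with $X'\iso X^*$. Moreover, because $X$ is invertible, the functor $X\ot(-)$ is an equivalence, so a morphism $f\colon Y\to Z$ is an identity (or equals another morphism) if and only if $\id_X\ot f$ is. This ``cancellation'' principle is what converts statements about $\Phi_{X,X}$ into statements about $\crta{ev}$ and $\crta{coev}$.

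For the direction (2) $\Rightarrow$ (1): assuming $\Phi_{X,X}=\Id_{X\ot X}$, I would first note that existence of a right dual follows from invertibility as above. Then to prove $\crta{coev}\comp\crta{ev}=\Phi_{X^*,X}$, I would tensor both sides on the left by $\id_X$ and use the cancellation principle; equivalently, compose with $\crta{coev}$ or $\crta{ev}$ on an appropriate side to unfold everything into a diagram involving only $\Phi$, $\crta{ev}$, $\crta{coev}$ and the triangle (zig-zag) identities relating $ev,coev$ (as in \equref{coev new}) to $\crta{ev},\crta{coev}$. Naturality of the braiding moves the $\Phi_{X,X}$ (now an identity) past the evaluation/coevaluation, collapsing the diagram. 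The bookkeeping is: express $\Phi_{X^*,X}$ via naturality of $\Phi$ applied to $\crta{coev}\colon I\to X\ot X^*$, obtaining a relation $\Phi_{X^*,X}\comp(\crta{coev}\ot\id_X)$-type expression in terms of $\Phi_{X,X}$ sandwiched by duality maps; then substitute $\Phi_{X,X}=\id$.

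For (1) $\Rightarrow$ (2): assuming $\crta{coev}\comp\crta{ev}=\Phi_{X^*,X}$, I would compute $\Phi_{X,X}$ by writing $X\ot X$ as $X\ot X$ with one factor replaced using the zig-zag identity $(\id_X\ot\crta{ev})\comp(\crta{coev}\ot\id_X)=\id_X$ (and its mirror), insert the hypothesis to replace the occurrence of $\crta{coev}\comp\crta{ev}$ (or rather $\crta{coev}$ followed, after a braiding, by $\crta{ev}$) by $\Phi_{X^*,X}$, and then use naturality/hexagon identities for $\Phi$ to simplify the resulting composite back down to $\id_{X\ot X}$. Again the cancellation principle for the equivalence $X\ot(-)$ lets me reduce to checking the identity after tensoring with $X$, where the extra dual pairs make the zig-zag substitutions available. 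The equivalence of $\Phi_{X,X}=\Id$ with $\Phi_{X,X}^{-1}=\Id$ is immediate since $\Phi_{X,X}$ is invertible.

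The main obstacle I anticipate is purely diagrammatic: correctly threading the braiding $\Phi$ past the evaluation and coevaluation maps using naturality, since $\crta{ev}$ and $\crta{coev}$ are not themselves morphisms to/from $I$ that interact trivially with $\Phi$ unless one is careful about which strand crosses which. In particular one must keep straight the distinction between the right-dual maps $\crta{ev},\crta{coev}$ and the left-dual maps $ev,coev=\Phi^{\pm1}\comp(\cdots)$ from \equref{coev new}, because the hypothesis in (1) is stated with the right-dual maps but the natural simplifications want the left-dual ones; reconciling these introduces exactly one factor of $\Phi_{X,X}$ (or $\Phi_{X^*,X}$), which is where the two conditions meet. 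Once the correct picture is drawn, each step is an instance of naturality, a hexagon, or a triangle identity, and invertibility of $X$ supplies the final cancellation.
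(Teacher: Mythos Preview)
Your plan is sound and the direction (1)$\Rightarrow$(2) is essentially what the paper does: rewrite the hypothesis as $\Phi_{X^*,X}^{-1}\comp\crta{coev}\comp\crta{ev}=\Id$, tensor with an extra $X$-strand so that a zig-zag identity becomes available, and then a short string-diagram chase using naturality collapses $\Phi_{X,X}$ to the identity. Your anticipated ``obstacle'' about threading the braiding past $\crta{ev},\crta{coev}$ is exactly the content of that chase, and it goes through cleanly.

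For (2)$\Rightarrow$(1) your route differs from the paper's and is slightly less efficient. You propose to invoke the standard fact that invertibles are dualizable, fix \emph{some} right dual $(\crta{ev},\crta{coev})$, and then separately prove $\crta{coev}\comp\crta{ev}=\Phi_{X^*,X}$ via cancellation by $X\ot(-)$. This works (the condition is independent of the chosen duality data, by naturality of $\Phi$), but it splits the argument into two pieces. The paper instead builds the right dual so that the condition is automatic: starting from any isomorphism $e\colon X'\ot X\to I$, it \emph{defines} $\crta{ev}:=e$ and $\crta{coev}:=\Phi_{X',X}\comp e^{-1}$, so that $\crta{coev}\comp\crta{ev}=\Phi_{X',X}$ holds by construction; the only thing left to check is the zig-zag identities for this pair, and that is a two-line computation using $\Phi_{X,X}=\Id$. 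Your approach buys a cleaner conceptual separation (existence of duals versus the special property), while the paper's buys brevity: once you see that $\crta{coev}$ can be \emph{defined} as $\Phi\comp\crta{ev}^{-1}$, condition~(1) is tautological and only the triangle identities need $\Phi_{X,X}=\Id$.
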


\begin{proof}
The condition (1) implies that
\begin{equation}\eqlabel{cuenta}
\scalebox{0.9}[0.9]{
\gbeg{2}{5}
\got{1}{X^*} \got{1}{X} \gnl
\gev  \gnl
\gdb \gnl
\gibr \gnl
\gob{1}{X^*} \gob{1}{X}
\gend}=
\scalebox{0.9}[0.9]{
\gbeg{3}{5}
\got{1}{X^*} \got{1}{X} \gnl
\gcl{3} \gcl{3} \gnl
\gob{1}{X^*} \gob{1}{X}
\gend}\qquad\textnormal{hence:}\qquad
\scalebox{0.9}[0.9]{
\gbeg{4}{3}
\got{2}{} \got{1}{X} \got{1}{X} \gnl
\gdb \gcl{1} \gcl{1} \gnl
\gob{1}{X} \gob{1}{X^*} \gob{1}{X} \gob{1}{X}
\gend}=
\scalebox{0.9}[0.9]{
\gbeg{4}{6}
\got{2}{} \got{1}{X} \got{1}{X} \gnl
\gdb \gcl{1} \gcl{4} \gnl
\gcl{3} \gev \gnl
\gvac{1} \gdb \gnl
\gvac{1} \gibr \gnl
\gob{1}{X} \gob{1}{X^*} \gob{1}{X} \gob{1}{X}
\gend}=
\scalebox{0.9}[0.9]{
\gbeg{4}{4}
\got{1}{X} \got{2}{} \got{1}{X} \gnl
\gcl{2} \gdb \gcl{2} \gnl
\gvac{1} \gibr \gnl
\gob{1}{X} \gob{1}{X^*} \gob{1}{X} \gob{1}{X}
\gend}
\end{equation}
Then we have:
$$
\scalebox{0.9}[0.9]{
\gbeg{2}{3}
\got{1}{X} \got{1}{X} \gnl
\gbr \gnl
\gob{1}{X} \gob{1}{X}
\gend}=
\scalebox{0.9}[0.9]{
\gbeg{4}{4}
\got{2}{} \got{1}{X} \got{1}{X} \gnl
\gdb \gibr \gnl
\gcl{1} \gev \gcl{1} \gnl
\gob{1}{X} \gob{5}{X}
\gend}\stackrel{\equref{cuenta}}{=}
\scalebox{0.9}[0.9]{
\gbeg{4}{6}
\got{1}{X} \got{2}{} \got{1}{X} \gnl
\gcl{4} \gdb \gcl{2} \gnl
\gvac{1} \gibr \gnl
\gvac{1} \gcl{1} \gibr \gnl
\gvac{1} \gev \gcl{1} \gnl
\gob{1}{X} \gob{5}{X}
\gend}=
\scalebox{0.9}[0.9]{
\gbeg{3}{4}
\got{1}{X} \got{1}{X} \gnl
\gcl{2} \gcl{2} \gnl
\gob{1}{X} \gob{1}{X}
\gend}
$$
Suppose that (2) holds and fix an isomorphism $e: X'\ot X\to I$ for every invertible object $X\in\C$. Define $c=\Phi_{X', X}\comp e^{-1}$, so we have:
$$(X\ot e)(c\ot X)=
\scalebox{0.9}[0.9]{
\gbeg{4}{5}
\got{2}{} \got{1}{X} \gnl
\glmpb \gnot{\hspace{-0,2cm}e^{-1}} \grmpb \gcl{2} \gnl
\gbr \gnl
\gcl{1} \glmpt\gnot{\hspace{-0,2cm}e}\grmpt \gnl
\gob{1}{X}
\gend}\stackrel{(2)}{=}
\scalebox{0.9}[0.9]{
\gbeg{4}{6}
\got{2}{} \got{1}{X} \gnl
\glmpb \gnot{\hspace{-0,2cm}e^{-1}} \grmpb \gcl{1} \gnl
\gcl{1} \gbr \gnl
\gbr \gcl{1} \gnl
\gcl{1} \glmpt\gnot{\hspace{-0,2cm}e}\grmpt \gnl
\gob{1}{X}
\gend}=
\scalebox{0.9}[0.9]{
\gbeg{3}{4}
\got{1}{X} \gnl
\gcl{2} \gnl
\gob{1}{X}
\gend}
$$
thus one law for right duality is satisfied, and similarly the other one is proved.
\qed\end{proof}

For invertible objects in $\C$ satisfying one of the equivalent conditions of the above lemma we will say that {\em they satisfy the property P}. 
The notation $\Phi^{\pm 1}$ will mean that the statement in question is valid both for $\Phi$ and $\Phi^{-1}$.

\begin{cor} \colabel{twist two morfs}
Given morphisms $f:X\to Y$ and $g:X\to Z$ between invertible objects $X,Y,Z$ 
satisfying the property P, then
\begin{equation} \eqlabel{braid}
\Phi_{Y,Z}^{\pm 1}(f\ot g)=g\ot f.
\end{equation}
If $Y=Z$, then $f\ot g=g\ot f$.
\end{cor}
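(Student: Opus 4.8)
The plan is to derive everything from the naturality of the braiding together with property P of the object $X$. First I would apply naturality of $\Phi$ to the pair of morphisms $f\colon X\to Y$ and $g\colon X\to Z$; the resulting commuting square says precisely $\Phi_{Y,Z}\comp(f\ot g)=(g\ot f)\comp\Phi_{X,X}$. Since $X$ is an invertible object satisfying property P, \leref{special}(2) gives $\Phi_{X,X}=\Id_{X\ot X}$, so the right-hand side collapses to $g\ot f$. This is exactly \equref{braid} in the $\Phi$ (i.e.\ $+1$) case, and note that the argument uses no assumption on $f,g$ beyond being morphisms between these invertible objects.

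For the $\Phi^{-1}$ case the quickest route is to apply the identity just obtained with the roles of $(f,Y)$ and $(g,Z)$ interchanged: this yields $\Phi_{Z,Y}\comp(g\ot f)=f\ot g$, that is $\Phi_{Z,Y}^{-1}\comp(f\ot g)=g\ot f$, which is the content of \equref{braid} read with $\Phi^{-1}$. Alternatively one invokes naturality of the inverse natural isomorphism $\Phi^{-1}$ directly on $f$ and $g$ and uses the equivalent form $\Phi_{X,X}^{-1}=\Id_{X\ot X}$ of property P to cancel the correction term.

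Finally, for the case $Y=Z$: here $\Phi_{Y,Z}=\Phi_{Y,Y}$, and since $Y$ is itself an invertible object satisfying property P, \leref{special}(2) forces $\Phi_{Y,Y}=\Id_{Y\ot Y}$. Substituting this into $\Phi_{Y,Z}(f\ot g)=g\ot f$ leaves $f\ot g=g\ot f$.

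I do not foresee a genuine obstacle: the only points needing a little care are getting the orientation of the naturality square right (so that the correction term appears as $\Phi_{X,X}$ precomposed on the source rather than as $\Phi_{Y,Z}$ postcomposed on the target), and observing that property P is actually invoked only for $X$ in the general statement and for $Y$ in the special case — the hypothesis is stated uniformly for $X$, $Y$, $Z$ merely for convenience of reference later.
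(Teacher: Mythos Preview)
Your argument is correct and is exactly the paper's approach: naturality of the braiding gives $\Phi_{Y,Z}(f\ot g)=(g\ot f)\Phi_{X,X}$, and \leref{special} collapses $\Phi_{X,X}$ to the identity, with the $\Phi^{-1}$ case and the $Y=Z$ case handled in the same way. Your remark that property~P is only actually used for $X$ (and for $Y$ in the special case) is a nice sharpening the paper leaves implicit.
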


\begin{proof}
By naturality (both of $\Phi$ and $\Phi^{-1}$) we have: $\Phi_{Y,Z}(f\ot g)=(g\ot f)\Phi_{X,X}\stackrel{\leref{special}}{=}g\ot f$. The rest also follows by \leref{special}.
\qed\end{proof}

In the following three claims, which are consequences of \coref{twist two morfs}, in all the appearances of the braiding $\Phi$ in the assertions the analogous statements with
$\Phi$ replaced by $\Phi^{-1}$ are valid.

Observe that for invertible objects $X$ satisfying the property P it is
\begin{equation}\eqlabel{odnos coev}
\crta{ev}\comp coev=\crta{ev}\comp\Phi^{-1}\comp\crta{coev}=\Id_I.
\end{equation}


\begin{prop}
Given morphisms $f:X\to Y, g:Y\to Z$ and $h:X\to Z$ between invertible objects $X,Y,Z$ in $\C$ satisfying the property P, observe that
$f\ot g, (Y\ot h)\Phi_{X,Y}=\Phi_{Z,Y}(h\ot Y): X\ot Y\to Y\ot Z$. Then it is
$$f\ot g=(Y\ot h)\Phi_{X,Y}\qquad\textnormal{if and only if}\qquad h=g\comp f.$$
\end{prop}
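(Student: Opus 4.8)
The plan is to deduce both implications from the single identity
$$f\ot g=\bigl(Y\ot (g\comp f)\bigr)\,\Phi_{X,Y}\colon X\ot Y\longrightarrow Y\ot Z,$$
which holds for any composable pair $f\colon X\to Y$, $g\colon Y\to Z$ and makes no reference to a third morphism. Before proving it, note that the equality $(Y\ot h)\Phi_{X,Y}=\Phi_{Z,Y}(h\ot Y)$ in the statement is simply naturality of $\Phi$, and that $f\ot g$, $(Y\ot h)\Phi_{X,Y}$ and $\Phi_{Z,Y}(h\ot Y)$ are all morphisms $X\ot Y\to Y\ot Z$, so the assertion is well posed.

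To prove the displayed identity I would first write $f\ot g=(\Id_Y\ot g)\comp(f\ot \Id_Y)$ by the interchange law, and then slide $f$ through the braiding: naturality of $\Phi$ in its first variable applied to $f\colon X\to Y$ gives $(\Id_Y\ot f)\comp\Phi_{X,Y}=\Phi_{Y,Y}\comp(f\ot\Id_Y)$, and since $Y$ satisfies the property P we have $\Phi_{Y,Y}=\Id_{Y\ot Y}$ by \leref{special}, whence $f\ot\Id_Y=(\Id_Y\ot f)\comp\Phi_{X,Y}$. Substituting this and using interchange once more,
$$f\ot g=(\Id_Y\ot g)\comp(\Id_Y\ot f)\comp\Phi_{X,Y}=\bigl(\Id_Y\ot(g\comp f)\bigr)\comp\Phi_{X,Y},$$
as claimed. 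In graphical terms this says that, once the $f$-box has been pushed below the crossing $\Phi_{X,Y}$, the crossing becomes $\Phi_{Y,Y}$ and disappears by property P, in the same spirit as the proof of \coref{twist two morfs}.

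Granted the identity, the implication ``$h=g\comp f\Rightarrow f\ot g=(Y\ot h)\Phi_{X,Y}$'' is immediate. Conversely, if $f\ot g=(Y\ot h)\Phi_{X,Y}$, then together with the identity we get $(Y\ot h)\Phi_{X,Y}=\bigl(Y\ot(g\comp f)\bigr)\Phi_{X,Y}$; precomposing with the isomorphism $\Phi_{X,Y}^{-1}$ gives $Y\ot h=Y\ot(g\comp f)$ as morphisms $Y\ot X\to Y\ot Z$, and since $Y$ is invertible the functor $Y\ot-$ is an equivalence, in particular faithful, so $h=g\comp f$. The argument is routine; the only point needing care is this last cancellation, which uses invertibility of $Y$ rather than merely property P — alternatively one composes with $Y'\ot-$ for an inverse $Y'$ of $Y$ and identifies $Y'\ot Y\ot-$ with the identity via the canonical constraints.
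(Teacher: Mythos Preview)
Your proof is correct. Both you and the paper handle the ``if'' direction in essentially the same way: decompose $f\ot g$ via interchange and use property P (through naturality of $\Phi$, exactly as in \coref{twist two morfs}) to slide one factor past the braiding. The difference lies in the ``only if'' direction. The paper argues by an explicit duality computation: it rebuilds $h$ from the hypothesis by inserting $\crta{coev}$ and $\crta{ev}$ via \equref{odnos coev}, then applies naturality and \equref{braid} repeatedly in string diagrams to reduce to $g\comp f$. Your route is shorter and more conceptual: you first establish the unconditional identity $f\ot g=(Y\ot(g\comp f))\Phi_{X,Y}$, after which both implications are pure cancellation --- the ``only if'' uses only that $\Phi_{X,Y}$ is an isomorphism and that $Y\ot-$ is faithful. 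The paper's approach exercises the duality data that \leref{special} attaches to invertible objects with property P; yours needs only the invertibility of $Y$, which is arguably the minimal hypothesis for the cancellation step.
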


\begin{proof}
It is:
$$\gbeg{2}{5}
\got{1}{X} \gnl
\gcl{1} \gnl
\gbmp{h} \gnl
\gcl{1} \gnl
\gob{1}{Z}
\gend\stackrel{\equref{odnos coev}}{=}
\gbeg{4}{5}
\got{5}{X} \gnl
\gdb \gcl{1} \gnl
\gibr \gbmp{h} \gnl
\gev \gcl{1} \gnl
\gob{5}{Z}
\gend=
\gbeg{4}{7}
\got{5}{X} \gnl
\gdb \gcl{2} \gnl
\gibr \gnl
\gcl{1} \gibr \gnl
\gcl{1} \gbr \gnl
\gev \gbmp{h} \gnl
\gob{5}{Z}
\gend\stackrel{*}{=}
\gbeg{4}{7}
\got{5}{X} \gnl
\gdb \gcl{2} \gnl
\gibr \gnl
\gcl{1} \gibr \gnl
\gcl{1} \gbmp{f} \gbmp{g} \gnl
\gev \gcl{1} \gnl
\gob{5}{Z}
\gend\stackrel{nat.}{=}
\gbeg{4}{7}
\got{5}{X} \gnl
\gdb \gcl{2} \gnl
\gibr \gnl
\gcl{1} \gbmp{g} \gbmp{f} \gnl
\gcl{1} \gibr \gnl
\gev \gcl{1} \gnl
\gob{5}{Z}
\gend\stackrel{\equref{braid}}{=}
\gbeg{4}{7}
\got{5}{X} \gnl
\gdb \gcl{2} \gnl
\gibr \gnl
\gcl{2} \gcl{2} \gbmp{f} \gnl
\gvac{2} \gbmp{g} \gnl
\gev \gcl{1} \gnl
\gob{5}{Z}
\gend\stackrel{\equref{odnos coev}}{=}gf
$$
where we used the assumption $f\ot g=(Y\ot h)\Phi_{X,Y}$ at the place *. Conversely, $f\ot g=(Y\ot g)(f\ot Y)\stackrel{\equref{braid}}{=}\Phi_{Z,Y}\comp(g\ot Y)(f\ot Y)
\stackrel{*}{=}\Phi_{Z,Y}\comp(h\ot Y)\stackrel{nat.}{=}(Y\ot h)\comp\Phi_{X,Y}$ where * represents the assumption $h=gf$.
\qed\end{proof}

\begin{cor}
Given morphisms $f:X\to Y$ and $g:Y\to Z$ between invertible objects $X,Y,Z$ in a braided monoidal category $\C$ satisfying the property P, it is:
\begin{equation} \eqlabel{braid rule}
\gbeg{3}{5}
\got{1}{X} \got{1}{Y} \gnl
\gbr \gnl
\gcl{1} \gbmp{f} \gnl
\gcl{1} \gbmp{g} \gnl
\gob{1}{Y} \gob{1}{Z}
\gend=
\gbeg{3}{5}
\got{1}{X} \got{1}{Y} \gnl
\gcl{1} \gcl{1} \gnl
\gbmp{f} \gbmp{g} \gnl
\gcl{1} \gcl{1} \gnl
\gob{1}{Y} \gob{1}{Z}
\gend
\end{equation}
\end{cor}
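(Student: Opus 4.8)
The plan is to read the statement off directly from the preceding Proposition by specializing its auxiliary morphism $h$. Concretely, I would set $h:=g\comp f: X\to Z$; since $X$ and $Z$ are invertible objects satisfying the property P, that Proposition applies verbatim to the triple $(f,g,h)$, and its ``if'' direction — the implication $h=g\comp f\ \Rightarrow\ f\ot g=(Y\ot h)\Phi_{X,Y}$ — yields
$$f\ot g=(Y\ot(g\comp f))\,\Phi_{X,Y}.$$
It then remains only to observe that the two string diagrams in \equref{braid rule} are precisely the two sides of this equality: the right-hand diagram is the morphism $f\ot g: X\ot Y\to Y\ot Z$, while the left-hand diagram, read from top to bottom, is $\Phi_{X,Y}$ followed by $\id_Y\ot f$ followed by $\id_Y\ot g$, that is, $(Y\ot(g\comp f))\,\Phi_{X,Y}$. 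The version with $\Phi$ replaced by $\Phi^{-1}$ follows in the same way from the $\Phi^{-1}$-form of the Proposition, which is available by the remark made just before it.

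If one prefers an argument that does not quote the Proposition, there is a short direct computation. Beginning from $(Y\ot(g\comp f))\,\Phi_{X,Y}$, naturality of the braiding in its first argument (applied to $g\comp f: X\to Z$) rewrites it as $\Phi_{Z,Y}\comp\big((g\comp f)\ot\id_Y\big)=\Phi_{Z,Y}\comp(g\ot\id_Y)\comp(f\ot\id_Y)$. Now \equref{braid} of \coref{twist two morfs}, applied to the pair $g: Y\to Z$ and $\id_Y: Y\to Y$ (which share the source $Y$ and take values in invertible objects with the property P), gives $\Phi_{Z,Y}\comp(g\ot\id_Y)=\id_Y\ot g$, so the expression collapses to $(\id_Y\ot g)\comp(f\ot\id_Y)=f\ot g$ by the interchange law; the $\Phi^{-1}$ case is identical, using the corresponding clause of \coref{twist two morfs}.

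I do not anticipate any genuine difficulty: all of the content has already been extracted in the Proposition (equivalently in \coref{twist two morfs}), and the corollary is merely its instance $h=g\comp f$. The only points that call for a moment's attention are bookkeeping ones: matching the graphical calculus of \equref{braid rule} with the algebraic identity, and, in the alternative argument, checking that \coref{twist two morfs} is invoked for two morphisms with a common source, as its hypotheses demand.
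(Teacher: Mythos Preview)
Your proposal is correct and matches the paper's approach exactly: the paper states this result as an unproved corollary immediately after the Proposition, so its intended proof is precisely the specialization $h:=g\comp f$ that you carry out. Your reading of the two string diagrams as $f\ot g$ and $(Y\ot(g\comp f))\Phi_{X,Y}$ is accurate, and the alternative direct computation via \coref{twist two morfs} is a valid (if redundant) second route.
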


\begin{cor} \colabel{comp-tensor}
Given a chain of morphisms $D_0\stackrel{f_1}{\to}D_1\stackrel{f_2}{\to} \dots \stackrel{f_n}{\to}D_n$ between invertible objects
$D_0,D_1,\dots, D_n$ in a braided monoidal category $\C$ satisfying the property P. Then it is:
\begin{equation} \eqlabel{comp-tensor}
(D_1\ot D_2\ot\dots\ot D_{n-1}\ot(f_n\comp f_{n-1}\comp\dots\comp f_1))\comp\Phi_{D_0,D_1\ot D_2\ot\dots\ot D_{n-1}}=f_1\ot f_2\ot\dots\ot f_n.
\end{equation}
\end{cor}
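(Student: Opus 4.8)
The plan is to prove \coref{comp-tensor} by induction on $n$, using the previous corollary (equation \equref{braid rule}) as the engine that converts a single braiding crossing over a composite morphism into a ``parallel'' tensor product of the factors.

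For the base case $n=1$ the statement reads $f_1 = f_1$ after noting $\Phi_{D_0, I}=\id$ (the empty tensor product of $D_1\ot\dots\ot D_{n-1}$ is the unit $I$), and for $n=2$ it is exactly \equref{braid rule} read with $X=D_0$, $Y=D_1$, $Z=D_2$, $f=f_1$, $g=f_2$: indeed $(D_1\ot(f_2\comp f_1))\comp\Phi_{D_0,D_1}=f_1\ot f_2$. For the inductive step I would assume the formula holds for a chain of length $n-1$, i.e. $(D_1\ot\dots\ot D_{n-2}\ot(f_{n-1}\comp\dots\comp f_1))\comp\Phi_{D_0,D_1\ot\dots\ot D_{n-2}}=f_1\ot\dots\ot f_{n-1}$. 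Tensoring this identity on the right by $\id_{D_{n-1}}$ and post-composing with $\id_{D_1\ot\dots\ot D_{n-2}}\ot(\id\otimes f_n)$ type manipulations, I would then need to move the last crossing: one rewrites $\Phi_{D_0, D_1\ot\dots\ot D_{n-1}}$ via the hexagon axiom as $(\Phi_{D_0,D_1\ot\dots\ot D_{n-2}}\ot D_{n-1})\comp(D_1\ot\dots\ot D_{n-2}\ot\Phi_{D_0,D_{n-1}})$ (up to associativity, which is suppressed by strictness conventions), and apply \equref{braid rule} to the pair $(f_{n-1}\comp\dots\comp f_1, f_n)$ sitting over the last strand to turn the crossing $\Phi_{D_0, D_{n-1}}$ followed by these two morphisms into the vertical (parallel) configuration. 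Combining with the inductive hypothesis on the first $n-1$ strands gives $f_1\ot\dots\ot f_{n-1}\ot f_n$.

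Concretely, in the graphical calculus the proof is: start from the right-hand side $f_1\ot\dots\ot f_n$; on the last strand recognize $D_{n-1}\stackrel{f_n}{\to}D_n$ and, since $D_{n-1},D_n$ satisfy property P, \equref{braid rule} lets us replace the parallel strands carrying $\id_{D_0}$ (to be fed by $\Phi$) and $f_n$ by a braiding $\Phi_{D_0,D_{n-1}}$ followed by $f_n$ on the appropriate strand and nothing on the other — i.e. we create the last crossing; then the inductive hypothesis creates the remaining crossings bundled into $\Phi_{D_0,D_1\ot\dots\ot D_{n-2}}$; finally the hexagon axiom (naturality plus coherence of $\Phi$) assembles $(\Phi_{D_0,D_1\ot\dots\ot D_{n-2}}\ot D_{n-1})\comp(D_1\ot\dots\ot D_{n-2}\ot\Phi_{D_0,D_{n-1}})$ into the single braiding $\Phi_{D_0, D_1\ot\dots\ot D_{n-1}}$, yielding the left-hand side. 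Throughout, \coref{twist two morfs} and \leref{special} guarantee that all the morphisms involved commute past one another appropriately and that self-braidings on the relevant invertible objects are trivial, which is what makes the bundling step legitimate.

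The main obstacle I expect is bookkeeping rather than conceptual: one must be careful that the intermediate objects through which the composite $f_n\comp\dots\comp f_1$ factors are all invertible and satisfy property P (they are, being the $D_i$), so that each application of \equref{braid rule} and \coref{twist two morfs} is justified, and one must correctly track the associativity rebracketings hidden by the string diagrams when splitting $\Phi_{D_0, D_1\ot\dots\ot D_{n-1}}$ via the hexagon. A cleaner alternative that avoids induction is to apply \equref{braid rule} simultaneously to each crossing in the fully-crossed diagram, reading the left-hand side of \equref{comp-tensor} as the strand $D_0$ braiding successively past $D_1,\dots,D_{n-1}$ while the composite $f_n\comp\dots\comp f_1$ runs down that strand, and then using naturality of $\Phi$ together with \coref{twist two morfs} to slide each $f_i$ onto its own output strand; this reduces \equref{comp-tensor} to $n-1$ instances of \equref{braid rule}, but the inductive formulation is likely the safest to write down rigorously.
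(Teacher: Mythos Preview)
Your proposal is correct and follows essentially the same route as the paper: both arguments amount to $n-1$ successive applications of \equref{braid rule}, sliding the $D_0$-strand past $D_1,\dots,D_{n-1}$ one at a time and accumulating the composite $f_n\circ\cdots\circ f_1$ on the last strand. The paper writes this out directly as a chain of string-diagram equalities (your ``cleaner alternative''), while you package the same computation as an induction; the only slip is that your hexagon factorisation of $\Phi_{D_0,D_1\otimes\cdots\otimes D_{n-1}}$ has the two factors composed in the wrong order, but the sources and targets force the correct one.
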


\begin{proof}
We have:
$$
\gbeg{7}{5}
\got{1}{D_0} \got{1}{D_1} \got{1}{D_2} \got{2}{\dots} \got{1}{D_n} \gnl
\gcl{1} \gcl{1} \gcl{1} \gcl{1}  \gcn{1}{1}{0}{0} \gcn{1}{1}{-1}{-1}  \gcn{1}{1}{-1}{-1}  \gnl
\gbmp{f_1} \gbmp{f_2} \gbmp{f_3}  \glmptb\gnot{\hspace{-0,4cm}\dots}\grmptb  \gbmp{f_n} \gnl
\gcl{1} \gcl{1} \gcl{1} \gcl{1} \gcn{1}{1}{0}{0} \gcn{1}{1}{-1}{-1}  \gcn{1}{1}{-1}{-1} \gnl
\gob{1}{D_1} \gob{1}{D_2} \gob{1}{D_3} \gob{2}{\dots} \gob{1}{D_n}
\gend\stackrel{\equref{braid rule}}{=}
\gbeg{7}{5}
\got{1}{D_0} \got{1}{D_1} \got{1}{D_2} \got{2}{\dots} \got{1}{D_n} \gnl
\gbr \gcl{1} \gcl{1}  \gcn{1}{1}{0}{0} \gcn{1}{1}{-1}{-1}  \gcn{1}{1}{-1}{-1} \gnl
\gcl{1} \gbmp{f_1} \gbmp{f_3} \glmptb\gnot{\hspace{-0,4cm}\dots}\grmptb \gbmp{f_n} \gnl
\gcl{1} \gbmp{f_2} \gcl{1} \gcl{1} \gcn{1}{1}{0}{0} \gcn{1}{1}{-1}{-1}  \gcn{1}{1}{-1}{-1} \gnl
\gob{1}{D_1} \gob{1}{D_2} \gob{1}{D_3} \gob{2}{\dots} \gob{1}{D_n}
\gend\stackrel{\equref{braid rule}}{=}
\gbeg{7}{7}
\got{1}{D_0} \got{1}{D_1} \got{1}{D_2} \got{2}{\dots} \got{1}{D_n} \gnl
\gbr \gcl{1} \gcl{1}  \gcn{1}{1}{0}{0} \gcn{1}{1}{-1}{-1}  \gcn{1}{2}{-1}{-1} \gnl
\gcl{1} \gbr \glmptb\gnot{\hspace{-0,4cm}\dots}\grmptb  \gnl
\gcl{1} \gcl{1} \gbmp{f_1} \gcl{1} \gcn{1}{1}{0}{0} \gcn{1}{1}{-1}{-1} \hspace{-0,42cm} \gbmp{f_n} \gnl
\gvac{1} \gcl{1} \gcl{1} \gbmp{f_2} \gcl{1} \gcn{1}{1}{0}{0} \gcn{1}{1}{-1}{-1}  \gcn{1}{1}{-1}{-1} \gnl
\gvac{1} \gcl{1} \gcl{1} \gbmp{f_3} \gcl{1} \gcn{1}{1}{0}{0} \gcn{1}{1}{-1}{-1}  \gcn{1}{1}{-1}{-1} \gnl
\gvac{1} \gob{1}{D_1} \gob{1}{D_2} \gob{1}{D_3} \gob{2}{\dots} \gob{1}{D_n}
\gend$$

$$=\dots=
\gbeg{9}{8}
\got{1}{D_0} \got{1}{D_1} \got{1}{D_2} \got{2}{\dots} \gvac{2} \got{1}{D_n} \gnl
\gbr \gcl{1} \gcl{2}  \gcn{1}{2}{0}{0} \gcn{1}{2}{-1}{-1} \gcn{1}{2}{-1}{-1}  \gcl{3} \gnl
\gcl{1} \gbr \gnl
\gcl{1} \gcl{1} \gvac{1} \glmptb\gnot{\hspace{-0,4cm}\dots}\grmptb \gnl
\gcl{3} \gcl{3} \gcl{3} \gcl{3} \gcn{1}{3}{0}{0} \gcn{1}{3}{-1}{-1} \hspace{-0,42cm} \gbmp{f_1} \gvac{1} \gbmp{f_n} \gnl
\gvac{6} \gbmp{\dots} \gvac{1} \gcl{2} \gnl
\gvac{6} \glmptb\gnot{\hspace{-0,4cm}f_{n-1}}\grmp \gnl
\gvac{1} \gob{1}{D_1} \gob{1}{D_2} \gob{1}{D_3} \gob{2}{\dots} \gob{2}{D_{n-1}} \gob{1}{D_n}   
\gend\stackrel{\equref{braid rule}}{=}
\gbeg{8}{9}
\got{1}{D_0} \got{1}{D_1} \got{1}{D_2} \got{2}{\dots} \got{1}{\hspace{-0,34cm} D_{n-1}} \got{1}{D_n} \gnl
\gbr \gcl{1} \gcl{2} \gcn{1}{2}{0}{0} \gcn{1}{2}{-1}{-1} \gcn{1}{2}{-1}{-1} \gcn{1}{3}{-1}{-1}  \gnl
\gcl{1} \gbr \gnl
\gcl{1} \gcl{1} \gvac{1} \glmptb\gnot{\hspace{-0,4cm}\dots}\grmptb \gnl
\gcl{4} \gcl{4} \gcl{4} \gcl{4} \gcn{1}{4}{0}{0} \gcn{1}{4}{-1}{-1} \hspace{-0,42cm} \gbr \gnl
\gvac{6} \gcl{3} \gbmp{f_1} \gnl
\gvac{7} \gbmp{\dots} \gnl
\gvac{7} \gbmp{f_n} \gnl
\gvac{1} \gob{1}{D_1} \gob{1}{D_2} \gob{1}{D_3} \gob{2}{\dots} \gob{1}{D_{n-1}} \gob{2}{D_n}
\gend
$$
in the last two diagrams two obvious lines are missing because they are involved in the chain of the braidings in the middle strings. The claim
follows by the braiding axioms.
\qed\end{proof}

In \cite[Section 4]{Femic2} we proved that in a symmetric tensor category $\C$ for an object $\M\in\dul{\Pic}(\C)$ its (right and left) dual object in $(\C\x\Bimod, \Del_{\C}, \C)$
is $\M^{op}$ and that the corresponding evaluation and coevaluation functors are equivalences. Moreover, in \cite[Corollary 4.11]{Femic2} we showed: $coev\simeq ev^{-1}$
up to the symmetry $\tau: \M\Del_{\C}\N\stackrel{\simeq}{\to}\N\Del_{\C}\M$ induced by $M\Del N\mapsto N\Del_{\C}M$, for $\M, \N\in\dul{\Pic}(\C)$. This means that
every $\M\in\dul{\Pic}(\C)$ satisfies the property $P$.
Therefore, given 
a chain of morphisms $\M_0\stackrel{\F_1}{\to}\M_1\stackrel{\F_2}{\to} \dots \stackrel{\F_n}{\to}\M_n$ in $\dul{\Pic}(\C)$ one has:
$\F_1\Del_{\C}\dots\Del_{\C}\F_n\simeq(\M_1\Del_{\C}\M_2\Del_{\C}\dots\Del_{\C}\M_{n-1}\Del_{\C}(\F_n\comp\F_{n-1}\comp\dots\comp\F_1))\comp\hspace{0,2cm}
\tau_{\M_0,\M_1\Del_{\C}\M_2\Del_{\C}\dots\Del_{\C}\M_{n-1}}$.

\section{Eilenberg-Watts Theorem for 2-categories: module categories over finite tensor categories} \selabel{EW}

A version of Eilenberg-Watts Theorem (proved in \cite{Watts, Eilenberg}) is valid also in the context of module categories over finite tensor categories.
Given a finite tensor category $\C$, any additive equivalence 2-endofunctor  $F$ of $\C\x\Mod$ is given by $F\iso\M\Del_{\C}-$ for an invertible $\C$-bimodule category $\M$.
We prove this result here.

Let $\Pseud(\C\x\Mod,\D\x\Mod)$ denote the category of pseudo-functors $\C\x\Mod\to\D\x\Mod$ and pseudo-natural transformations between them. It is actually a 2-category, where
2-cells are modifications between pseudo-natural transformations, as so are $\C\x\Mod$ and $\D\x\Mod$, but we consider them as ordinary categories here.

\begin{thm} \thlabel{pre-EW}
Let $\C$ and $\D$ be finite tensor categories. There is a functor
$$\Omega: \Pseud(\C\x\Mod,\D\x\Mod)\to\D\x\C\x\Bimod.$$
\end{thm}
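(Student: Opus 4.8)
The plan is to build $\Omega$ as ``evaluation at the regular module category $\C$'', upgraded by a right $\C$-action on the values that is extracted from pseudo-functoriality. For each $C\in\C$ let $R_C:=-\ot C:\C\to\C$; associativity of $\ot$ makes $R_C$ a left $\C$-module endofunctor of $\C$, i.e.\ a $1$-cell of $\C\x\Mod$, and for a morphism $f:C\to C'$ in $\C$ the whiskering $-\ot f$ is a $2$-cell $R_f:R_C\Rightarrow R_{C'}$. Modelling $\C$ as a strict monoidal category, as we may by coherence, we have $R_I=\Id_\C$ and $R_{C'}\comp R_C=R_{C\ot C'}$ strictly, so the $R_C$ realize the regular right $\C$-action on $\C$ inside $\C\x\Mod$.

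On $0$-cells I would set $\Omega(\F):=\F(\C)$. As a $0$-cell of $\D\x\Mod$ this is a $\D$-module category; I equip it with the right $\C$-action $M\crta\ot C:=\F(R_C)(M)$ and $M\crta\ot f:=\F(R_f)_M$, which is a bifunctor $\F(\C)\times\C\to\F(\C)$ because $\F$ preserves vertical composition of $2$-cells strictly. Its constraints come from the coherence data of $\F$: the left $\D$-module functor structure carried by each $1$-cell $\F(R_C)$ is the ``middle'' associativity isomorphism $(D\crta\ot M)\crta\ot C\iso D\crta\ot(M\crta\ot C)$; the compositor $\F(R_{C'})\comp\F(R_C)\iso\F(R_{C'}\comp R_C)=\F(R_{C\ot C'})$ of the pseudo-functor is the right associativity constraint $(M\crta\ot C)\crta\ot C'\iso M\crta\ot(C\ot C')$; and the unitor $\F(\Id_\C)\iso\Id_{\F(\C)}$ is the right unit constraint $M\crta\ot I\iso M$. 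The key point --- the ``hexagon versus pentagon'' correspondence announced in the introduction --- is that the coherence axiom for the monoidal structure of $\F$, evaluated at the composable triple $(R_{C''},R_{C'},R_C)$, is exactly the pentagon axiom for this right $\C$-module structure, and the unit coherences of $\F$ are the triangle axioms; the coherence of $\F$ mixing the module structure of the $\F(R_C)$ with the compositor gives the compatibility of the middle and outer associators. Hence $\Omega(\F)$ is an object of $\D\x\C\x\Bimod$.

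On $1$-cells I would send a pseudo-natural transformation $\omega:\F\to\G$ to its component $\Omega(\omega):=\omega(\C):\F(\C)\to\G(\C)$, which is a $1$-cell of $\D\x\Mod$, i.e.\ a left $\D$-module functor. Its right $\C$-linearity isomorphism is the invertible $2$-cell $\omega_{R_C}:\G(R_C)\comp\omega(\C)\Rightarrow\omega(\C)\comp\F(R_C)$ furnished by pseudo-naturality of $\omega$ at the $1$-cell $R_C$, with naturality of this family in $C$ being compatibility of $\omega$ with the $2$-cells $R_f$. The pentagonal coherence of $\omega$ against the composable pair $(R_{C'},R_C)$ becomes the pentagon axiom for the right $\C$-linearity of $\omega(\C)$, and the unit coherence of $\omega$ gives compatibility with the unit constraints, so $\Omega(\omega)$ is a $1$-cell of $\D\x\C\x\Bimod$. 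On $2$-cells, a modification $\Gamma:\omega\to\omega'$ is sent to $\Gamma(\C):\omega(\C)\Rightarrow\omega'(\C)$, which is left $\D$-linear because it is a $2$-cell of $\D\x\Mod$ and right $\C$-linear by the modification axiom taken at each $R_C$. Functoriality of $\Omega$ is then immediate: evaluating pseudo-natural transformations and modifications at the single object $\C$ preserves identities and (vertical) composition strictly, and the constraints above are transported compatibly.

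I expect the only genuine difficulty to be bookkeeping: verifying that substituting the particular $1$-cells $R_C$, the $2$-cells $R_f$, and the equality $R_I=\Id_\C$ (in the strict model) into the coherence diagrams of a pseudo-functor, of a pseudo-natural transformation, and of a modification literally yields, respectively, the pentagon and triangle axioms of a bimodule category, the pentagon axiom of a bimodule functor, and the linearity condition for a bimodule natural transformation. This is a direct but somewhat lengthy diagram-by-diagram translation, which we spell out below.
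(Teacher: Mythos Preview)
Your proposal is correct and follows essentially the same approach as the paper: evaluate at the regular object $\C$ and extract the right $\C$-action from the pseudo-functor applied to the right-multiplication endofunctors $R_C=-\ot C$ (the paper's $F_C$), with the compositor of $\F$ giving the associator and the pseudo-naturality $2$-cells of $\omega$ giving the right $\C$-linearity of $\omega(\C)$. The only cosmetic differences are that you invoke strictness of $\C$ to have $R_{C'}\comp R_C=R_{C\ot C'}$ on the nose, and you also sketch the $2$-cell level via modifications, which the paper deliberately suppresses.
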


\begin{proof}
\dul{On 0-cells.}
Let $\F:\C\x\Mod\to\D\x\Mod$ be a pseudo-functor. For $\N\in\C\x\Mod$ we have a functor:
\begin{equation} \eqlabel{func C,N}
\F_{\C, \N}: \N\simeq\Fun_{\C}(\C, \N)\to\Fun_{\D}(\F(\C), \F(\N)).
\end{equation}
Given two functors 
$G\in\Fun_{\C}(\C, \C)$ and $H\in\Fun_{\C}(\C, \N)$ we have:
\begin{equation} \eqlabel{func comp}
\theta^{\C, \N}_{H,G}: \F_{\C, \N}(H\comp G)\stackrel{\iso}{\to}\F_{\C, \N}(H)\F_{\C, \C}(G)\quad\textnormal{and}\quad\F_{\C, \C}(\Id_{\C})\simeq\Id_{\F_{\C, \C}(\C)}
\end{equation}
where $\theta^{\C, \N}_{H,G}$ are isomorphisms natural in $\C$ and $\N$. Let $N\in\N$ and define $F_N=-\crta\ot N\in\Fun_{\C}(\C, \N)$. Given $C\in\C$ we clearly have:
$F_{C\crta\ot N}\iso F_N\comp F_C$, hence we identify: $\F_{\C, \N}(F_{C\crta\ot N})=\F_{\C, \N}(F_N\comp F_C)$. For this reason by \equref{func comp} we have a natural isomorphism
\begin{equation} \eqlabel{left C-lin fun}
\theta^{\C, \N}_{F_N,F_C}: \F_{\C, \N}(F_{C\crta\ot N}) \stackrel{\iso}{\to} \F_{\C, \N}(F_N)\F_{\C, \C}(F_C).
\end{equation}
For $\N=\C$ the functor $\F_{\C, \C}: \C\to\Fun_{\D}(\F(\C), \F(\C))$ makes $\M:=\F(\C)$ a right $\C$-module category. Given $M\in\M$ define
\begin{equation} \eqlabel{M right C-mod}
M\crta\ot C=\F_{\C, \C}(C)(M).
\end{equation}
Take another $D\in\C$, then we define the associativity constraint for the right $\C$-action by
\begin{equation} \eqlabel{asoc a}
a_{M, C,D}=\theta^{\C, \C}_{F_D,F_C}(M): \F_{\C, \C}(F_{C\ot D})(M)\stackrel{\iso}{\to}\F_{\C, \C}(F_D)\F_{\C, \C}(F_C)(M).
\end{equation}
Now the pentagonal coherence for $\M$ to be a right $\C$-module category holds because of the hexagonal coherence for $\theta^{\C, \C}_{F_C,F_D}(M)$:
\begin{equation} \eqlabel{hexagon theta}
\end{equation} \vspace{0,2cm}
$$
\scalebox{0.84}{
\bfig
\putmorphism(-200,500)(1,0)[\F(F_C\crta\comp(F_D\crta\comp F_E)) ` \F(F_C)\crta\comp\F(F_D\crta\comp F_E) ` \theta_{F_C, F_D\crta\comp F_E}]{1400}1a
\putmorphism(1150,500)(1,0)[\phantom{(X \ot (Y \ot U)) \ot W}` \F(F_C)\crta\comp(\F(F_D)\crta\comp\F(F_E))  ` \Id_{\F(F_C)}\crta\comp\theta_{F_D, F_E}]{1650}1a
\putmorphism(2800,500)(0,-1)[`(\F(F_C)\crta\comp\F(F_D))\crta\comp\F(F_E) `\alpha']{500}1l
\putmorphism(-160,500)(0,-1)[``\F(\alpha)]{500}1r
\putmorphism(-200,0)(3,-1)[\F((F_C\crta\comp F_D)\crta\comp F_E) `  ` ]{1160}1l
  \putmorphism(100,-80)(4,-1)[` \F(F_C\crta\comp F_D)\crta\comp\F(F_E) ` ]{1160}0l
\putmorphism(0,-100)(3,-1)[`  ` \theta_{F_C\crta\comp F_D,F_E}]{1160}0l
\putmorphism(1900,-240)(3,1)[`  ` ]{280}1r
\putmorphism(1750,-330)(3,1)[`  ` \theta_{F_C,F_D}\crta\comp\Id_{\F(F_E)}]{500}0r
\efig}
$$
where $E\in\C$ and $\alpha$ and $\alpha'$ are associativity constraints for the functors in $\Fun_{\C}(\C, \C)$. The equation in $\Fun_{\D}(\F(\C), \F(\C))$ encoded in this diagram:
$$(\theta_{F_C,F_D}\crta\comp\Id_{\F(F_E)}) \theta_{F_C\crta\comp F_D,F_E} \F(\alpha)=\alpha' (\Id_{\F(F_C)}\crta\comp\theta_{F_D, F_E})\theta_{F_C, F_D\crta\comp F_E}$$
applied to an object $M\in\M$ and by \equref{asoc a} transforms into:
$$a_{M\crta\ot E,D,C}\comp a_{M,E,D\ot C}\comp(M\crta\ot\mathfrak{a})=(a_{M, E,D}\crta\ot C)a_{M, E\ot D,C}$$  
where $\mathfrak{a}$ is the associativity constraint in $\C$ (the associativity $\alpha'$ becomes reduntant). The rule for the action of the unit is proved similarly, thus
$\M$ is indeed a right $\C$-module category. Since $\F_{\C, \C}(C)$ is left $\D$-linear for every $C\in\C$, the category $\M$ is actually a $\D\x\C$-bimodule category
with the natural isomorphism $\gamma_{D,M,C}: (D\crta\ot M)\crta\ot C\to D\crta\ot (M\crta\ot C)$ given by the following composition:
$$\gamma_{D,M,C}: (D\crta\ot M)\crta\ot C=\F_{\C,\C}(C)(D\crta\ot M)\stackrel{\tilde s_{D,M}}{\iso}D\crta\ot \F_{\C,\C}(C)(M)=D\crta\ot(M\crta\ot C)$$
for every $D\in\D$, where $\tilde s_{D,M}$ is the left $\D$-module functor structure of $\F_{\C,\C}(C)$.

\bigskip

\dul{On 1-cells.}
Given a pseudo-natural transformation $\omega:\F\to\G$ of pseudo-functors $\F, \G:\C\x\Mod\to\D\x\Mod$.
Set $\F(\C)=\M$ and $\G(\C)=\N$. Then $\omega(\C): \M\to\N$ is a left $\D$-module functor. Let us show that it is right $\C$-linear.
Let $\Ll\in\C\x\Mod$ and fix an object $L\in\Ll$. For the left $\C$-module functor $F_L=-\crta\ot L:\C\to\Ll$ the pseudo-naturality of $\omega$ implies that
the diagram
\begin{equation*}
\scalebox{0.88}{\bfig
\putmorphism(-180,400)(1,0)[\M` \N `\omega(\C)]{1800}1a
\putmorphism(-200,0)(1,0)[\F(\Ll)` \G(\Ll),` \omega(\Ll)]{1850}1a
\putmorphism(-200,400)(0,-1)[\phantom{B}``\F_{\C, \Ll}(F_L)]{380}1l
\putmorphism(1620,400)(0,-1)[\phantom{B\ot B}``\G_{\C, \Ll}(F_L)]{380}1r
\efig}
\end{equation*}
commutes up to an isomorphism, that is, there is a natural isomorphism:
\begin{equation} \eqlabel{omega F_L}
\tilde\omega(F_L): \omega(\Ll)\F_{\C, \Ll}(F_L)\stackrel{\iso}{\to}\G_{\C, \Ll}(F_L)\omega(\C).
\end{equation}
Set $\Ll=\C$ and $L=C$, then applying $\tilde\omega(F_C)$ to some $M\in\M$ 
we get that there are isomorphisms natural in $M$ and $C$:
$$\tilde\omega(F_C)(M): \omega(\C)\F_{\C, \C}(F_C)(M)\stackrel{\iso}{\to}\G_{\C, \C}(F_C)\omega(\C)(M).$$
Recall from \equref{M right C-mod} the right $\C$-module category structure of $\M=\F(\C)$ and identify $F_C\in\Fun_{\C}(\C, \C)\simeq\C$ with $C\in\C$.
We get:
\begin{equation} \eqlabel{s MC}
s_{M,C}=\tilde\omega(F_C)(M): \omega(\C)(M\crta\ot C)\stackrel{\iso}{\to}\omega(\C)(M)\crta\ot C.
\end{equation}
Let $D\in\C$ and set $\alpha=\omega(\C):\M\to\N$. Observe that the octogonal coherence for $\omega$ contains three arrows with associativity for 1-cells, 
which become identities when applied to the 2-categories $\C\x\Mod$ and $\D\x\Mod$, so we indeed have a pentagon. Now this pentagonal coherence for $\omega$ 
applied to functors $F_C$ and $F_D$ reads (we read the transformations from the right to the left!):
\begin{equation*}
\scalebox{0.84}{
\bfig
\putmorphism(-200,500)(1,0)[\alpha\crta\comp\F(F_D)\crta\comp\F(F_C)` \alpha\crta\comp\F(F_DF_C) ` \Id\crta\comp\hspace{0,12cm}\theta_{F_D,F_C}]{1560}{-1}a
\putmorphism(1170,500)(1,0)[\phantom{(X \ot (Y \ot U)) \ot W}` \G(F_DF_C)\crta\comp\alpha ` \tilde\omega(F_DF_C)]{1480}1a
\putmorphism(2570,500)(0,-1)[``\theta'_{F_D,F_C}\crta\comp\Id]{500}1l
\putmorphism(-160,500)(0,-1)[``\tilde\omega(F_D)\crta\comp\Id]{500}1r
\putmorphism(-200,0)(1,0)[\G(F_D)\crta\comp\hspace{0,12cm}\alpha\hspace{0,12cm}\crta\comp\F(F_C) ` \G(F_D)\crta\comp\G(F_C)\crta\comp\alpha ` \Id\crta\comp\hspace{0,12cm}\tilde\omega(F_C)]{2760}1b
\efig}
\end{equation*}
Applying the equation $(\Id\crta\comp\hspace{0,12cm}\tilde\omega(F_C))(\tilde\omega(F_D)\crta\comp\Id)(\Id\crta\comp\hspace{0,12cm}\theta_{F_D,F_C})=(\theta'_{F_D,F_C}\crta\comp\Id)\tilde\omega(F_DF_C)$
encoded in this diagram to an object $M\in\M$ and using \equref{s MC} and \equref{asoc a} we get:
$$(s_{M,C}\crta\ot D)s_{M\crta\ot C, D}\comp \alpha(a_{M,C,D})=a_{\alpha(M),C,D}\comp s_{M,C\ot D}.$$
The compatibility with unit is proved similarly and $\omega(\C)$ is a functor in $\D\x\C\x\Bimod$.

\qed\end{proof}

\begin{lma} \lelabel{left C-lin fun}
The functor $\F_{\C, \N}: \N\to\Fun_{\D}(\F(\C), \F(\N))$ from \equref{func C,N} is left $\C$-linear.
\end{lma}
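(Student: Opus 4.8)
The plan is to show that the functor $\F_{\C,\N}$ from \equref{func C,N} is compatible with the left $\C$-module structures on its source $\N$ and its target $\Fun_{\D}(\F(\C),\F(\N))$. On the source side the left $\C$-action is the given $\C$-module category structure of $\N$; on the target side the left $\C$-action on $\Fun_{\D}(\F(\C),\F(\N))$ is precomposition-free and instead arises by post-composing with the right $\C$-action on $\F(\N)$ obtained exactly as in \equref{M right C-mod}, i.e.\ $(C\crta\ot G)(-) = G(-)\crta\ot C = \F_{\C,\N}(F_C)\circ$ applied after $G$ — or more precisely via the functors $\F_{\C,\C}(F_C)$ acting on the common source $\F(\C)$. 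So first I would spell out the two $\C$-module structures explicitly and identify what the linearity natural isomorphism must be.

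The key computation is the following. For $N\in\N$ and $C\in\C$ one has $F_{C\crta\ot N}\iso F_N\comp F_C$ in $\Fun_{\C}(\C,\N)$ (noted already right before \equref{left C-lin fun}), and applying $\F_{\C,\N}$ together with the structure isomorphism $\theta^{\C,\N}_{F_N,F_C}$ of \equref{left C-lin fun} gives a natural isomorphism
\[
\F_{\C,\N}(F_{C\crta\ot N}) \stackrel{\iso}{\to} \F_{\C,\N}(F_N)\,\F_{\C,\C}(F_C),
\]
which is precisely an isomorphism $\F_{\C,\N}(C\crta\ot N)\iso C\crta\ot\F_{\C,\N}(N)$ in the target module category. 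That will be the candidate left $\C$-linearity constraint $\ell_{C,N}$ for $\F_{\C,\N}$. So the second step is to define $\ell_{C,N}:=\theta^{\C,\N}_{F_N,F_C}$ under these identifications and observe that it is natural in both $C$ and $N$, which is immediate from the naturality of $\theta^{\C,\N}_{-,-}$ in $\C$ and $\N$ asserted in \equref{func comp}.

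The third step, and the main obstacle, is the pentagon/coherence axiom that $\ell$ must satisfy to make $\F_{\C,\N}$ a left $\C$-module functor: for $C,D\in\C$ and $N\in\N$ the two ways of going from $\F_{\C,\N}((C\ot D)\crta\ot N)$ to $D\crta\ot(C\crta\ot\F_{\C,\N}(N))$ — one using the associativity $\mathfrak a$ of $\C$ together with $\ell$, the other using the module associativity of $\N$, then $\ell$ twice, then $\gamma$ and the $\D\x\C$-bimodule associativity constraint of $\F(\N)$ — must agree. This is exactly the hexagonal coherence diagram \equref{hexagon theta} for $\theta$, now written for the triple of functors $F_N, F_C, F_D\in\Fun_{\C}(\C,\N)$ (with $F_E$ replaced by $F_N$ and with $F_C\comp F_D\iso F_{D\crta\ot C}$ etc.), translated through the identifications $F_{C\crta\ot N}\iso F_N\comp F_C$. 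Concretely I would take the hexagon for $\theta$ applied to $(F_C, F_D, F_N)$, note that the associativity constraints $\alpha,\alpha'$ among the $F_?$ in $\Fun_{\C}(\C,-)$ reduce on objects to the module associativity of $\N$ and of $\F(\N)$ together with $\mathfrak a$ (as in the computation right after \equref{hexagon theta}), and read off the required pentagon. The unit axiom for $\ell$ follows the same way from $\F_{\C,\C}(\Id_{\C})\simeq\Id$ in \equref{func comp}, exactly as the unit rule was handled in the proof of \thref{pre-EW}. Thus the proof is a direct transcription of the already-established coherence of $\theta$, and the only care needed is bookkeeping the identifications $F_{C\crta\ot N}\iso F_N\comp F_C$ consistently.
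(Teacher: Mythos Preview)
Your approach is the paper's: define the linearity constraint as $s_{C,N}=\theta^{\C,\N}_{F_N,F_C}$ and deduce the pentagon from the hexagonal coherence \equref{hexagon theta} of $\theta$, then handle the unit via $\F_{\C,\C}(\Id_\C)\simeq\Id$. Two bookkeeping slips to fix. First, the left $\C$-module structure on $\Fun_{\D}(\F(\C),\F(\N))$ is $C\crta\ot G=G(-\crta\ot C)=G\comp\F_{\C,\C}(F_C)$, i.e.\ \emph{pre}composition with the right $\C$-action on the source $\F(\C)$ (no right $\C$-action on $\F(\N)$ has been defined); your ``more precisely'' clause lands on this, but the preceding sentence about post-composition on $\F(\N)$ is wrong. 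Second, in the hexagon it is $F_C$ that must be replaced by $F_N$, so $(F_C,F_D,F_E)\mapsto(F_N,F_C,F_D)$, not $F_E\mapsto F_N$: the functor $F_N:\C\to\N$ is necessarily the outermost one in $F_N\comp F_C\comp F_D$, and your substitution would not typecheck.
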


\begin{proof}
Recall that $\Fun_{\D}(\F(\C), \F(\N))$ is a left $\C$-module category by $C\crta\ot F=F(-\crta\ot C)$ for any $F\in\Fun_{\D}(\F(\C), \F(\N))$ and $C\in\C$, and here we
consider the right $\C$-module structure on $\M=\F(\C)$ induced by $\F_{\C, \C}$ via \equref{M right C-mod}. Then taking $F=\F_{\C, \N}(N)$ we have: $C\crta\ot\F_{\C, \N}(N)=
\F_{\C, \N}(N)\F_{\C, \C}(C)$. Now define 
\begin{equation} \eqlabel{lin fun}
s_{C,N}=\theta^{\C, \N}_{F_N,F_C}: \F_{\C, \N}(C\crta\ot N)\stackrel{\iso}{\to}C\crta\ot\F_{\C, \N}(N)
\end{equation}
where we identified $\F_{\C, \N}(C\crta\ot N)=\F_{\C, \N}(F_N\comp F_C)$ and $C\crta\ot\F_{\C, \N}(N)=\F_{\C, \N}(N)\F_{\C, \C}(C)$ as commented above. 
The pentagonal coherence for $(\F_{\C, \N}, s_{C,N})$ to be left $\C$-linear holds, because of the hexagonal coherence for $\theta^{\C, \N}_{F_N,F_C}$:
first replace $F_C, F_D, F_E$ by $F_N, F_C, F_D$ in \equref{hexagon theta} respectively to obtain:
\begin{equation} \eqlabel{translating left}
(\theta_{F_N,F_C}\crta\comp\Id_{\F_{\C, \C}(F_D)}) \theta_{F_N\crta\comp F_C,F_D} \F_{\C, \N}(\alpha)=\alpha' (\Id_{\F_{\C, \N}(F_N)}\crta\comp\theta_{F_C, F_D})\theta_{F_N, F_C\crta\comp F_D}.
\end{equation}
In \equref{asoc a} we gave the right $\C$-module associativity constraint, its left hand-side version would read: $a_{D, C, -}=\theta^{\C, \C}_{F_D,F_C}(-)$. 
Now apply the equation \equref{translating left} to $M\in\M$ to get:
$$(D\crta\ot s_{C,N}) s_{D,C\crta\ot N} \F_{\C, \N}(a_{D,C,N})=a_{D,C,\F_{\C, \N}(N)}s_{D\ot C, N}.$$
The compatibility with unit is proved similarly.
\qed\end{proof}

The functor $\Omega: \Pseud(\C\x\Mod,\D\x\Mod)\to\D\x\C\x\Bimod$ from \thref{pre-EW}, although essentially surjective, is not faithful, and thus it is not an equivalence of categories.
The candidate for the quasi-inverse functor is the one presented in \thref{EW}. Namely, for a $\D\x\C$-bimodule category $\M$ we have a 2-functor $\M\Del_{\C}-: \C\x\Mod\to\D\x\Mod$
and $\Omega(\M\Del_{\C}-)=\M\Del_{\C}\C\simeq\C$. However, for two pseudo-functors $\F, \G$ and a functor $H:\F(\C)\to\G(\C)$ in $\D\x\C\x\Bimod$, we find that
$H\Del_{\C}-: \F(\C)\Del_{\C}-\to\G(\C)\Del_{\C}-$ defines a 2-natural transformation, but there may be a proper pseudo-natural transformation $\alpha: \F(\C)\Del_{\C}-\to\G(\C)\Del_{\C}-$
so that $\Omega(\alpha)=\alpha(\C)$ coincides on objects with $H$, i.e. that there is an isomorphism $\omega(M): \alpha(\C)(M)\to H(M)$ natural in $M\in\F(\C)$.

\medskip

We do have the equivalence with the category $2\x\Fun_{cont}(\C\x\Mod,\D\x\Mod)$ of 2-functors $\C\x\Mod\to\D\x\Mod$ which preserve cokernels and arbitrary coproducts,
and the corresponding 2-natural transformations.
If $\F, \G: \M\to\N$ are two functors in $\C\x\Mod$ that preserve colimits and $colim (\Ll_i)$ denotes certain colimit of
objects $\Ll_i\in\C\x\Mod$, we say that a natural transformation $\alpha: \F\to\G$ preserves the colimit if $\alpha(colim (\Ll_i))=colim(\alpha(\Ll_i))$.

To prove the mentioned equivalence we will use \cite[Proposition (1.1) in Chapter II]{Bass} by which we have:

\begin{prop} \prlabel{class in ab}
Let $\A$ be an abelian category with arbitrary coproducts and let $\Oo$ be a class of objects in $\A$ 
containing a generator of $\A$ and which is closed under cokernels and arbitrary coproducts of objects in $\Oo$. Then $\Oo$ contains all the objects of $\A$, i.e. $\Oo=Ob(\A)$.
\end{prop}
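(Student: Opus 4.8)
The plan is to use the defining property of a generator to exhibit every object of $\A$ as the cokernel of a morphism whose source and target both lie in $\Oo$, and then to invoke the hypothesis that $\Oo$ is closed under cokernels.

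First I would fix a generator $G\in\Oo$. Since $\A$ has arbitrary coproducts, one of the standard equivalent reformulations of ``$G$ is a generator'' is that for every object $A\in\A$ the canonical morphism $p_A\colon G^{(I_A)}\to A$, with $I_A=\Hom_{\A}(G,A)$ and with component $f$ on the copy of $G$ indexed by $f\in I_A$, is an epimorphism. I would spell this out, since it is the only non-formal point: if the cokernel $C$ of $p_A$ were nonzero, then the (nonzero) quotient map $q\colon A\twoheadrightarrow C$ would, by the generator property, satisfy $q\comp g\neq 0$ for some $g\colon G\to A$; but $g$ is one of the summands of $G^{(I_A)}$ and $q\comp p_A=0$, so $q\comp g=0$, a contradiction. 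By hypothesis $G^{(I_A)}$, being a coproduct of copies of the object $G\in\Oo$, lies in $\Oo$; write $P_A:=G^{(I_A)}$.

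Next, given an arbitrary $A\in\A$, I would set $K:=\Ker(p_A)$ with its canonical monomorphism $\iota\colon K\hookrightarrow P_A$, and apply the previous step to $K$ to obtain an epimorphism $p_K\colon P_K\twoheadrightarrow K$ with $P_K\in\Oo$. The composite $f:=\iota\comp p_K\colon P_K\to P_A$ then has $\operatorname{im}(f)=\operatorname{im}(\iota)=K$ (because $p_K$ is epi), so $\operatorname{coker}(f)\iso P_A/K\iso A$, the last isomorphism being the first isomorphism theorem applied to $p_A$. Thus $A$ is the cokernel of a morphism between the two objects $P_K,P_A\in\Oo$, and closure of $\Oo$ under cokernels yields $A\in\Oo$. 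Since $A$ was arbitrary, $\Oo=Ob(\A)$.

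The only real obstacle is conceptual rather than computational: being an epimorphic image of an object of $\Oo$ is not in itself enough to conclude membership in $\Oo$, because $\Oo$ is only assumed closed under cokernels, not under epimorphic images. This is exactly why the argument must resolve not only $A$ itself but also the ``syzygy'' $K=\Ker(p_A)$ by an object of $\Oo$; once that two-step presentation is set up, the rest is routine abelian-category bookkeeping.
\qed
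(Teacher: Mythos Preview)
Your argument is correct: the two-step resolution $P_K\to P_A\to A\to 0$ with $P_K,P_A$ coproducts of copies of the generator, followed by closure under cokernels, is exactly the standard proof. Note that the paper does not supply its own proof of this proposition; it simply quotes \cite[Proposition~(1.1), Chapter~II]{Bass}, and the argument you have written is essentially the one found there.
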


Given a  finite tensor category $\C$ observe that the category of left $\C$-module categories $\C\x\Mod$ has arbitrary coproducts. Namely, if $(\M_i\vert i\in J)$
is a family of $\C$-module categories, any object in the coproduct $\oplus_{i\in J}\M_i$ is a finite direct sum $M=\oplus_{i=1\\i\in J}^{i=n}M_i$ with $M_i\in\M_i$.
Then $\oplus_{i\in J}\M_i$
is a $\C$-module category by $C\crta\ot M:=\oplus_{i=1\\i\in J}^{i=n}(C\crta\ot M_i)$ for any $C\in\C$. Moreover, $\C$ is a generator of $\C\x\Mod$. Any $\C$-module category
is equivalent to $\C_A$ for an algebra $A\in\C$ by \cite[Theorem 3.17]{Os1}, \cite[Corollary 7.10.5]{EGNObook}. Given a non-zero functor $F:\C_A\to\N$ in $\C\x\Mod$, there is
an object $X\in\C_A$ such that $F(X)\not=0$, then define a functor $G:\C\to\C_A$ in $\C\x\Mod$ by $G(I)=X$. Then $FG:\C\to\N$ is a non-zero functor, which proves that $\C$ is a
generator of $\C\x\Mod$. Now we may proceed with:

\begin{thm} \thlabel{EW}
Let $\C$ and $\D$ be finite tensor categories. There is an equivalence of categories
$$\HH: \D\x\C\x\Bimod\to 2\x\Fun_{cont}(\C\x\Mod,\D\x\Mod).$$
\end{thm}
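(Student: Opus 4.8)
The plan is to construct the equivalence $\HH$ explicitly and then verify it is fully faithful and essentially surjective, using \prref{class in ab} to reduce everything to the generator $\C\in\C\x\Mod$.

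\textbf{Construction of $\HH$ on 0-cells and 1-cells.} Given a $\D\x\C$-bimodule category $\M$, I would set $\HH(\M)=\M\Del_{\C}-:\C\x\Mod\to\D\x\Mod$. Since $\Del_{\C}$ is exact in each variable and commutes with arbitrary coproducts (the Deligne product of finite abelian categories is right exact and additive), this is a 2-functor preserving cokernels and coproducts, so it lands in $2\x\Fun_{cont}$. On 1-cells, a $\D\x\C$-bimodule functor $H:\M\to\M'$ gives the 2-natural transformation $H\Del_{\C}-:\M\Del_{\C}-\to\M'\Del_{\C}-$; 2-naturality is clear since whiskering a Deligne product by a $\C$-module functor on the right is strictly functorial in the 2-category $\C\x\Mod$. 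Functoriality of $\HH$ itself ($\HH(H'\comp H)=\HH(H')\comp\HH(H)$ and preservation of identities) is immediate.

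\textbf{Essential surjectivity and the quasi-inverse.} Given a continuous 2-functor $\F:\C\x\Mod\to\D\x\Mod$, apply $\Omega$ from \thref{pre-EW} to obtain the $\D\x\C$-bimodule category $\M:=\F(\C)$, with right $\C$-action as in \equref{M right C-mod}. I then need a 2-natural isomorphism $\F\simeq\M\Del_{\C}-$. First build it on the object $\C$: by \cite[Lemma 4.3]{Femic2}-type identifications one has $\M\Del_{\C}\C\simeq\M=\F(\C)$ canonically. Then for an arbitrary $\N\in\C\x\Mod$, use that $\N\simeq\C_A$ for an algebra $A\in\C$ (\cite[Theorem 3.17]{Os1}); present $\C_A$ as a cokernel of a map between coproducts of copies of $\C$ — concretely the coequalizer encoding the $A$-action — and use that both $\F$ and $\M\Del_{\C}-$ preserve this colimit to transport the isomorphism at $\C$ to an isomorphism at $\N$. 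More cleanly: the class $\Oo$ of 0-cells $\N$ on which the comparison natural transformation $\M\Del_{\C}-\to\F(-)$ is an equivalence contains the generator $\C$, and is closed under cokernels and arbitrary coproducts because both functors are continuous; by \prref{class in ab} (applied levelwise, or rather to the relevant hom-categories / to the category of module categories presented via algebras) $\Oo$ is everything. Naturality in $\N$ of this comparison, i.e. that it assembles into a 2-natural transformation, follows because it is built from the coherence isomorphisms $\theta$ of \equref{func comp} which are themselves natural.

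\textbf{Full faithfulness.} For $\D\x\C$-bimodule categories $\M,\M'$ I must show $\HH$ induces an equivalence $\Fun_{\D\x\C}(\M,\M')\to\Nat_{2}(\M\Del_{\C}-,\M'\Del_{\C}-)$. Faithfulness and fullness on 2-cells (modifications) reduces, via \leref{basic lema}, to evaluating at $\C$: a modification of 2-natural transformations $H\Del_{\C}-\Rightarrow H'\Del_{\C}-$ is determined by its component at $\C$, which is a $\D\x\C$-bimodule natural transformation $H\Rightarrow H'$, and conversely every such lifts; here the key point is precisely part (1) of \leref{basic lema}, that $H\Del_{\C}\Id_{\Pp}$ determines $H$. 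For fullness on 1-cells: given a 2-natural transformation $\alpha:\M\Del_{\C}-\to\M'\Del_{\C}-$, its component $\alpha(\C):\M\to\M'$ is a priori only left $\D$-linear, but 2-naturality against the functors $F_C=-\crta\ot C:\C\to\C$ forces (exactly as in the ``On 1-cells'' part of \thref{pre-EW}, now with strict 2-naturality rather than pseudo-naturality) that $\alpha(\C)$ is right $\C$-linear, hence a $\D\x\C$-bimodule functor $H$ with $\HH(H)=H\Del_{\C}-$; that $\alpha$ and $H\Del_{\C}-$ agree on all of $\C\x\Mod$ and not just at $\C$ again follows from the generator argument, since both are continuous 2-natural transformations agreeing at $\C$.

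\textbf{Main obstacle.} The routine-looking but genuinely delicate point is the generator/colimit argument: I must be careful that $\C\x\Mod$ is not literally an abelian category, so \prref{class in ab} is not applied to it directly but rather to suitable abelian hom-categories, or one reworks it through the algebra presentations $\N\simeq\C_A$ and the explicit bar-type colimit presenting $\C_A$ — and one must check that the comparison transformation built at $\C$ really is compatible with these presentations, i.e. that it is 2-natural, which comes down to chasing the coherence isomorphisms $\theta$ of \equref{func comp}. The rest (the bimodule-functor axioms for $\alpha(\C)$, the pentagon checks) is a direct transcription of the computations already carried out in the proof of \thref{pre-EW}, specialized to the 2-categorical (strict) case where the associativity and unit constraints of pseudo-functors are identities.
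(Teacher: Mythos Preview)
Your overall strategy is the same as the paper's: define $\HH(\M)=\M\Del_{\C}-$, prove essential surjectivity and fullness by a generator argument via \prref{class in ab}, and recover the right $\C$-linearity of $\alpha(\C)$ from (strict) 2-naturality exactly as in \thref{pre-EW}. Where you diverge from the paper, and where there is a genuine gap, is in the \emph{construction} of the comparison transformation $\M\Del_{\C}-\to\F(-)$ in the essential-surjectivity step.

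You write: build the isomorphism at $\C$, then ``transport'' it to each $\N$ via a colimit presentation, and afterwards check naturality using the coherence isomorphisms $\theta$. But the argument via \prref{class in ab} (your ``more cleanly'' formulation) presupposes that a natural transformation $\G_{\C,-}:\M\Del_{\C}-\to\F(-)$ already exists on \emph{all} of $\C\x\Mod$; only then can you form the class $\Oo$ of objects on which it is an equivalence. Manufacturing $\G_{\C,\N}$ object-by-object from ad hoc colimit presentations of $\N$ does not a priori give you something natural in $\N$, and ``it is built from $\theta$'' is not a construction. The paper closes this gap cleanly with one extra ingredient you omit: the adjunction
\[
\Fun_{\D}(\M\Del_{\C}\N,\Ll)\simeq\Fun_{\C}(\N,\Fun_{\D}(\M,\Ll))
\]
of \cite[Corollary 3.22]{Gr}. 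Since $\F_{\C,\N}:\N\to\Fun_{\D}(\M,\F(\N))$ is left $\C$-linear (\leref{left C-lin fun}), setting $\Ll=\F(\N)$ and transporting $\F_{\C,\N}$ across this adjunction produces $\G_{\C,\N}:\M\Del_{\C}\N\to\F(\N)$ for every $\N$ simultaneously, with naturality in $\N$ automatic. Only after this does the paper invoke \prref{class in ab}. Your ``main obstacle'' paragraph correctly senses that something is missing here, but misidentifies it as a problem with abelianness of $\C\x\Mod$ rather than with the existence of the comparison map.

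Two smaller points. First, your appeal to \leref{basic lema} for faithfulness is misplaced: that lemma requires $\Pp$ to be \emph{invertible}, which a general $\C$-module category is not. Faithfulness is simpler than that --- just evaluate at $\C$, as the paper does: $\F\Del_{\C}-\simeq\G\Del_{\C}-$ gives $\F\simeq\F\Del_{\C}\C\simeq\G\Del_{\C}\C\simeq\G$. Second, the theorem is stated as an equivalence of ordinary categories (the paper explicitly says so before \thref{pre-EW}), so your discussion of modifications and ``fullness on 2-cells'' is not needed.
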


\begin{proof}
\dul{On 0-cells.}
Given a $\D\x\C$-bimodule category $\M$, there is a 2-functor $\HH(\M):=\M\Del_{\C}-: \C\x\Mod\to\D\x\Mod$. It sends
an object $\N\in\C\x\Mod$ to the object $\M\Del_{\C}\N\in\D\x\Mod$ and a functor $\F:\N_1\to\N_2$ \\ in $\C\x\Mod$ to the functor
$\M\Del_{\C}\F: \M\Del_{\C}\N_1\to\M\Del_{\C}\N_2\in\D\x\Mod$.
The functor $\M\Del_{\C}\F$ is well defined: it is $\C$-balanced, precisely because $\F$ is a left $\C$-module functor, and clearly $\M\Del_{\C}\F$ is left $\D$-linear.
The monoidal structure of $\M\Del_{\C}-$ for the composition of 1-cells is obviously the identity, so it is indeed a 2-functor.
Given a natural transformation $\alpha: \F\to\G$ in $\C\x\Mod$, we have a left $\D$-module natural transformation $\M\Del_{\C}\alpha: \M\Del_{\C}\F\to\M\Del_{\C}\G$.
The 2-functor $\HH(\M)=\M\Del_{\C}-$ is described in details in \cite[Theorem 3.4]{FMM}. Considered as a functor, it is left adjoint to the functor $\Fun_{\D}(\M, -)$,
thus it preserves colimits, in particular cokernels and arbitrary coproducts. Namely, by \cite[Corollary 3.22]{Gr} there is an equivalence of bimodule categories:
\begin{equation} \eqlabel{adj f}
\Fun_{\D}(\M\Del_{\C}\N, \Ll)\simeq\Fun_{\C}(\N, \Fun_{\D}(\M, \Ll)).
\end{equation}

\dul{On 1-cells.}
Given a $\D\x\C$-bimodule functor $\G:\M\to\N$ the corresponding 2-natural transformation in $2\x\Fun_{cont}(\C\x\Mod,\D\x\Mod)$ is given by $\HH(\G):=\G\Del_{\C}-: \M\Del_{\C}-\to\N\Del_{\C}-$,
which evaluated at any 
object $\Ll\in\C\x\Mod$ is given by the functor $\G\Del_{\C}\Ll$ in $\D\x\Mod$. The functor $\G\Del_{\C}\Ll$ is defined in the obvious way as the functor $\M\Del_{\C}\F$ above
(it is $\C$-balanced, since $\G$ is a right $\C$-module functor). We have that $\HH(\G)$ is a 2-natural transformation, for given
a functor $\F: \Ll_1\to\Ll_2$ in $\C\x\Mod$, the transformation: 
$\HH(\G)(\F): \HH(\G)(\Ll_2)\comp\HH(\M)(\F)\to\HH(\N)(\F)\comp\HH(\G)(\Ll_1)$ translates to: $(\G\Del_{\C}\Ll_2)(\M\Del_{\C}\F)\to(\N\Del_{\C}\F)(\G\Del_{\C}\Ll_1)$,
where both are obviously equal to the natural transformation $\G\Del_{\C}\F$, so the transformation $\HH(\G)(\F)$ can be taken to be identity (then it trivially
fulfilles the necessary coherences).

\bigskip

\dul{$\HH$ is essentially surjective.}
Take a 2-functor $\F$. Set $\Ll=\F(\N)$ in \equref{adj f} and consider $\F_{\C, \N}$ as an object on the right thereof (in \leref{left C-lin fun} we proved that it
is left $\C$-linear). Let $\G_{\C, \N}: \M\Del_{\C}\N\to\F(\N)$ be its companion on the left hand-side. Since the functors $\F_{\C, \N}$ are natural in $\N$, so are the
functors $\G_{\C, \N}$ and we may consider the natural transformation $\G_{\C, -}: \M\Del_{\C}-\to\F$. Let $\Oo$ be the class of $\C$-module categories $\N$ for which
$\G_{\C, \N}$ is an equivalence of left $\D$-module categories. Since both $\M\Del_{\C}-$ and $\F$ preserve cokernels and arbitrary coproducts, the class $\Oo$ is closed
for cokernels and arbitrary coproducts. Obviously $\C\x\Mod$ contains its generator, $\C$, then by \prref{class in ab} $\G_{\C, \N}$ is an equivalence for every $\C$-module category
$\N$, i.e. $\G_{\C, -}$ is a natural isomorphism. Observe that any natural transformation which operates over the corresponding underlying ordinary categories of 2-categories is a
2-natural transformation. Thus we have: $\F\simeq\M\Del_{\C}-$ as 2-functors.

\dul{$\HH$ is fully faithful.}
We clearly have that $\HH$ is faithful, i.e. the assignment
$$\HH_{\M, \N}: \Fun_{\D\x\C}(\M, \N)\to 2\x\Nat_{cont}(\M\Del_{\C}-, \N\Del_{\C}-)$$
$$\F\mapsto \F\Del_{\C}-$$
is injective, for given another $\G\in\Fun_{\D\x\C}(\M, \N)$ such that $\F\Del_{\C}-\simeq\G\Del_{\C}-$, it is $\F\simeq\F\Del_{\C}\C\simeq\G\Del_{\C}\C\simeq\G$.
Now, given $\alpha\in 2\x\Nat(\M\Del_{\C}-, \N\Del_{\C}-)$, we proved in \thref{pre-EW} that $\alpha(\C):\M\to\N$ is a $\D\x\C$-bimodule functor. To prove this we used the isomorphism \equref{s MC}
coming from the structure of a pseudo-natural transformation of $\alpha$ (which now is identity!) and the isomorphism $\theta$ in \equref{asoc a} coming from the structure of a
pseudo-functor of $\M\Del_{\C}-$ (which now is identity, too!). This means that we have that $\alpha(\C):\M\to\N$ is a $\D\x\C$-bimodule functor with the strict right $\C$-module functor structure,
where $\M$ and $\N$ are considered as strict right $\C$-module categories. Though, due to \cite[Proposition 2.8]{Ga1} this is not a problem, as we may consider $\D\x\C$-bimodule
categories $\M'$ and $\N'$ which are strict as right $\C$-module categories and such that $\M\simeq\M'$ and $\N\simeq\N'$. (If you are not comfortable with the non-strict left $\D$-module
structures, you may consider $\D\x\C$-bistructures as right $\C\Del\D^{rev}$-structures and replace $\M$ and $\N$ by equivalent strict right $\C\Del\D^{rev}$-module categories.)
Finally, similarly as in the proof of essential surjectiveness of $\HH$, we prove that $\HH_{\M, \N}(\alpha(\C))=\alpha(\C)\Del_{\C}-\simeq\alpha$ as natural transformations
$\M\Del_{\C}-\to \N\Del_{\C}-$. Let $\Oo$ be the class of $\Ll\in\C\x\Mod$ such that $\alpha(\C)\Del_{\C}\Ll\simeq\alpha(\Ll)$ as functors $\M\Del_{\C}\Ll\to \N\Del_{\C}\Ll$.
Then $\Oo$ contains the generator $\C$ and it is clearly closed for cokernels and arbitrary coproducts. Then by \prref{class in ab} we have that $\alpha(\C)\Del_{\C}\Ll\simeq\alpha(\Ll)$
for all $\Ll\in\C\x\Mod$, meaning that $\HH_{\M, \N}$ is a full functor.
\qed\end{proof}

\begin{rem}
Evaluating equation \equref{omega F_L} at $M\in\M$ yields an isomorphism in $\N\Del_{\C}\Ll$:
$$\tilde\omega(F_L)(M): \omega(\Ll)(M\Del_{\C}F_L)\stackrel{\iso}{\to}\omega(\C)(M)\Del_{\C}F_L.$$
Given that $F_L\in\Fun(\C,\Ll)_{\C}\simeq\Ll$ we may rewrite this as:
$$\tilde\omega(L)(M): \omega(\Ll)(M\Del_{\C}L)\stackrel{\iso}{\to}\alpha(M)\Del_{\C}L$$
where $\alpha=\omega(\C)$, or more generally:
\begin{equation} \eqlabel{omega i alfa}
\tilde\omega(\bullet)(-): \omega(\Ll)(-\Del_{\C}\bullet)\stackrel{\iso}{\to}\alpha(-)\Del_{\C}\Id_{\Ll}(\bullet).
\end{equation}
\end{rem}

\begin{rem}
The right hand-side version of the above Theorem is proved similarly using the adjunction $\Fun(\N\Del_{\C}\M, \A)_{\E}\iso\Fun(\N, \Fun(\M, \A)_{\E})_{\C}$,
proved in \cite[Proposition 4.5]{Femic2}.
\end{rem}

On one hand, from \thref{EW} it is clear that there is a bijection between 
equivalence functors $\C\x\Mod\to\D\x\Mod$ and invertible $\D\x\C$-bimodule categories.
On the other hand, if $\C=\D$ both categories in \thref{EW} are monoidal: $(\C\x\Bimod, \Del_{\C}, \C)$ and $\Fun_{cont}(\C\x\Mod,\C\x\Mod)$ is monoidal with respect to
the composition of functors and unit being the identity functor on $\C\x\Mod$. 
We have a monoidal equivalence of categories $\C\x\Bimod$ and $\Fun_{cont}(\C\x\Mod,\C\x\Mod)$,
because of the canonical equivalence of bimodule categories. If moreover $\C$ is braided, the monoidal subcategory of one-sided $\C$-bimodule categories $(\C^{br}\x\Mod, \Del_{\C}, \C)$
turns out to be monoidally equivalent to the subcategory $\Fun_{cont}^*(\C\x\Mod,\C\x\Mod)$ of $\Fun_{cont}(\C\x\Mod,\C\x\Mod)$ of ``continuos'' endofunctors of $\C\x\Mod$
with respect to the one-sided $\C$-module structures.

Extracting invertible objects and morphisms in the above monoidal categories we get the categorical groups:
$\ul{\BrPic}(\C)$, \cite[Section 4]{ENO}, and $\ul{\AUT}(\C\x\Mod)$ - the categorical group of 
equivalence endofunctors of $\C\x\Mod$. Truncating them to ordinary
groups (taking corresponding isomorphism classes of objects) we obtain the Brauer-Picard group $\BrPic(\C)$ and $\AUT(\C\x\Mod)$ the group of isomorphism classes of
equivalence endofunctors of $\C\x\Mod$, where the isomorphism classes are taken with respect to natural isomorphisms between functors. These groups are isomorphic,
as well as their corresponding subgroups: the Picard group $\Pic(\C)$ and $\AUT^*(\C\x\Mod)$ (with one-sided $\C$-structures).

\begin{cor} \colabel{autofun}
For a finite tensor category $\C$ there is a group isomorphism
$$\BrPic(\C)\to\AUT(\C\x\Mod)\quad\textnormal{given by}\quad [\M]\mapsto[\M\Del_{\C}-]$$
which restricts to the isomorphism of subgroups $\Pic(\C)\iso\AUT^*(\C\x\Mod)$.
\end{cor}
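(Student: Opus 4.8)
The plan is to extract \coref{autofun} directly from \thref{EW} by the standard argument that a monoidal equivalence of categories induces an isomorphism of the associated categorical groups, hence of their truncations to ordinary groups. First I would take $\C=\D$ in \thref{EW} and recall that, as already noted in the paragraph preceding the corollary, $\HH:\C\x\Bimod\to\Fun_{cont}(\C\x\Mod,\C\x\Mod)$ is not merely an equivalence but a \emph{monoidal} equivalence, where the left-hand side carries $(\Del_{\C},\C)$ and the right-hand side carries composition of functors with unit $\Id_{\C\x\Mod}$; the monoidal structure on $\HH$ comes from the canonical equivalences $(\M\Del_{\C}-)\comp(\N\Del_{\C}-)\simeq(\M\Del_{\C}\N)\Del_{\C}-$ and $\C\Del_{\C}-\simeq\Id$. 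Since a monoidal functor sends (left- or right-)invertible objects to invertible objects and reflects invertibility when it is an equivalence, $\HH$ restricts to an equivalence between the full subcategories of invertible objects, i.e. between the categorical groups $\ul{\BrPic}(\C)$ and $\ul{\AUT}(\C\x\Mod)$.

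Next I would pass to $\pi_0$. Truncating a categorical group to its group of isomorphism classes of objects is a functor, and a (monoidal) equivalence of categorical groups induces a group isomorphism on $\pi_0$; applying this to the equivalence from the previous step yields the group isomorphism $\BrPic(\C)\to\AUT(\C\x\Mod)$. On the level of objects this isomorphism is exactly $[\M]\mapsto[\M\Del_{\C}-]$: indeed $\HH(\M)=\M\Del_{\C}-$ by construction in \thref{EW}, and $\M$ is invertible as a $\C$-bimodule category precisely when $\M\Del_{\C}-$ is an equivalence of $\C\x\Mod$ (one direction is immediate; the converse uses essential surjectivity and full faithfulness of $\HH$, which force a quasi-inverse of $\M\Del_{\C}-$ to be of the form $\N\Del_{\C}-$, whence $\M\Del_{\C}\N\simeq\C\simeq\N\Del_{\C}\M$). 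That $\M\mapsto\M\Del_{\C}-$ is multiplicative up to the canonical isomorphism is exactly the monoidal-functor property of $\HH$, so the induced map on classes is a group homomorphism, and it is bijective because $\HH$ is an equivalence.

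For the restriction to subgroups, I would invoke the parenthetical remark before the corollary: when $\C$ is braided, $\HH$ restricts to a monoidal equivalence between the subcategory $(\C^{br}\x\Mod,\Del_{\C},\C)$ of one-sided $\C$-bimodule categories and the subcategory $\Fun_{cont}^{*}(\C\x\Mod,\C\x\Mod)$ of endofunctors that are continuous with respect to the one-sided $\C$-module structures. Extracting invertible objects on both sides gives $\ul{\Pic}(\C)\simeq\ul{\AUT}^{*}(\C\x\Mod)$, and truncating gives $\Pic(\C)\iso\AUT^{*}(\C\x\Mod)$; since $\C^{br}\x\Mod$ is a monoidal subcategory of $\C\x\Bimod$, this isomorphism is the restriction of the previous one, as claimed. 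The only point that requires a little care — and which I would regard as the main obstacle, though it is more bookkeeping than a genuine difficulty — is checking that $\HH$ is genuinely monoidal (that the chosen coherence isomorphisms satisfy the hexagon/associativity and unit axioms) and that it does restrict as asserted to the one-sided subcategories; once that is in place the corollary is a formal consequence of \thref{EW}.
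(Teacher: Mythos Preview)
Your proposal is correct and follows essentially the same approach as the paper: the corollary is stated without a separate proof and is meant to be read off directly from the two paragraphs preceding it, which assert exactly what you spell out---that for $\C=\D$ the equivalence $\HH$ of \thref{EW} is monoidal, that extracting invertible objects yields the categorical-group equivalence $\ul{\BrPic}(\C)\simeq\ul{\AUT}(\C\x\Mod)$, and that truncating gives the claimed group isomorphism together with its one-sided restriction. Your write-up simply makes explicit the bookkeeping (multiplicativity, reflection of invertibility, the $\pi_0$ passage) that the paper leaves implicit.
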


\section{Bialgebroid categories} \selabel{Bialgebroids}

Let $\C$ be a finite tensor category. Given objects $X_1\dots X_n$, we will often write $X_1\cdots X_n$ for the tensor product
$X_1\ot \dots\ot X_n$ in $\C$. The n-fold Deligne tensor product $\C^{\Del n}$ is a tensor category with the componentwise tensor product and
unit object $I^{\Del n}$, where $I$ is the unit object of $\C$.

In  \cite[Section 4]{Femic3} we defined a $\C$-comonoidal category. It is a 5-tuple $(\A, \Delta, \varepsilon, a, l, r)$ where $\A$
is a $\C$-bimodule category, $\Delta: \A\to\A\Del_{\C}\A$ and $\varepsilon: \A\to\C$ are $\C$-bimodule functors, and $a,l,r$ are certain $\C$-bimodule
natural isomorphisms, so that certain 2 coherence diagrams commute. We use the Sweedler-type notation $\Delta(A)=A_{(1)}\Del_{\C} A_{(2)}$ for $A\in\A$.
Intuitively, a $\C$-bialgebroid category would be a monoidal and a $\C$-comonoidal category such that the comultiplication and the counit functors of $\A$ 
are monoidal functors. However, one should be cautious about how to define a tensor product in $\A\Del_{\C}\A$. We present two definitions for a $\C$-bialgebroid, 
one for a non-braided and the other for a braided category $\A$. For $\A$ braided these two definitions basically coincide: the first one is slightly more general - 
we do not specify in it how the $\C$-bimodule category structure of $\A$ is given. 

\medskip

In \cite[Section 3.3]{Neu} bimonoidal categories were defined. Let $(\Cc_k, \Del, vec)$ denote the 2-category of (small) finite abelian categories, where $vec$
is the category of finite-dimensional $k$-vector spaces. A bimonoidal category is a monoidal category with a comonoidal structure in $(\Cc_k, \Del, vec)$ with certain
compatibility conditions for the monoidal and the comonoidal structure. A $\C$-bialgebroid category is a monoidal category with a comonoidal structure in the 2-category
$(\C\x\Bimod, \Del_{\C}, \C)$, rather than in
$(\Cc_k, \Del, vec)$, with similar compatibility conditions. Concretely, compared to \cite[Definition 3.8]{Neu}, we define a $\C$-bialgebroid category requiring a
different natural transformation $\Sigma$ and basically replacing the category $vec$ by $\C$ and the Deligne tensor product $\Del$ by $\Del_{\C}$
throughout. To save space, we will not type the necessary 12 coherence diagrams: they will be the analogous 12 coherence diagrams as in \cite[Definition 3.8]{Neu}
where the described changes are made.

In the next definition we will use the following functor:
$$\Delta *\Delta: \A\Del\A\to\A\Del_{\C}\A \qquad\textnormal{given by}\quad\Delta *\Delta(A\Del B)=A_{(1)}B_{(1)}\Del_{\C} A_{(2)}B_{(2)}.$$

\medskip

\begin{defn} \delabel{bialg noncomm}
Let $\C$ and $\A$ be finite tensor categories where $(\A, \ot, I, \alpha, \tilde\lambda, \tilde\rho)$ is a monoidal category structure of $\A$. We say that $\A$ is a 
{\em $\C$-bialgebroid category} if $(\A, \Delta, \varepsilon, a, l, r)$ is a $\C$-comonoidal category so that there are natural isomorphisms:
\begin{enumerate}
\item $\Sigma: \Delta *\Delta \to \Delta\comp\ot$, 
that is, for all $A, B\in\A$ there is an isomorphism: \\ $A_{(1)}B_{(1)}\Del_{\C} A_{(2)}B_{(2)}\stackrel{\Sigma}{\to} (AB)_{(1)}\Del_{\C}(AB)_{(2)}$;
\item $\Theta: \ot_{\C}\comp(\varepsilon\Del\varepsilon) \to \varepsilon\comp\ot_{\A}$;
that is, for all $A, B\in\A$ there is an isomorphism: $\varepsilon(A)\varepsilon(B)\stackrel{\Theta}{\to} \varepsilon(AB)$;
\end{enumerate}
and isomorphisms:
\begin{enumerate}[(i)]
\item $\Sigma_0: I\Del_{\C}I\stackrel{\Sigma_0}{\to}\Delta(I)=I_{(1)}\Del_{\C}I_{(2)}$;
\item $\Theta_0: I_{\C}\stackrel{\Theta_0}{\to}\varepsilon(I)$,
where $I_{\C}$ is the unit object in $\C$;
\end{enumerate}
so that the 12 coherence diagrams described above commute.
\end{defn}

We comment briefly the contents of the 12 compatibility diagrams:
\begin{enumerate}
\item $\Sigma^2, \Delta^2$;
\item $\Delta(\tilde\lambda)$ (with $\Sigma, \Sigma_0, \Delta$);
\item $\Delta(\tilde\rho)$ (with $\Sigma, \Sigma_0, \Delta$);
\item $\Theta^2$ (with $\varepsilon$);
\item $\varepsilon(\tilde\lambda)$ (with $\Theta_0, \Theta$);
\item $\varepsilon(\tilde\rho)$ (with $\Theta_0, \Theta$);
\item $a, \Delta, \Sigma$;
\item $l\ot l$ (with $\Theta, \varepsilon, \Sigma$);
\item $r\ot r$ (with $\Theta, \varepsilon, \Sigma$);
\item $\Sigma_0^2, \Delta, a$;
\item $\varepsilon, \Sigma_0, \Theta_0, l$;
\item $\varepsilon, \Sigma_0, \Theta_0, r$.
\end{enumerate}

\medskip

In order to make a faithful generalization of the notion of a bialgebroid, actually we would have had to require $\A$ to be a bimodule category
over $\C\Del\C^{rev}$. However, this kind of construction goes beyond our interest at this moment.
We will be interested in a $\C$-bialgebroid category $\A$ which is symmetric as a monoidal category.
In \cite[Lemma 2.8]{Femic3} we proved for a braided tensor category $\A$ which has a structure of a $\C$-bimodule category coming from two
tensor functors $\lambda, \rho:\C\to\A$, that $\A\Del_{\C}\A$ is a braided finite tensor category with the tensor product
$$
M: (\A\Del_{\C}\A)\Del(\A\Del_{\C}\A)\to\A\Del_{\C}\A
$$
induced by a functor $\C$-balanced at two places which is given by  $(A\Del B)\Del(A'\Del B'):=AA'\Del_{\C} BB'$, for $A\Del B, A'\Del B'\in\A\Del\A$. 
When proving the $\C$-balance of this
operation, we kept track of the directions in which we moved objects from $\C$, so we did not encounter symmetricity restrictions on the braiding,
as it can be seen in the proof. To simplify this we will assume, as it is done in \cite{DGNO1}
and \cite{Gr1}, that the functors $\lambda$ and $\rho$ factor through the M\"uger's center category $\A'$. The M\"uger's center category $\A'$ is
a braided subcategory of $\A$ such that for all $A'\in\A'$ and all $A\in\A$ the braiding $\tilde\Phi$ of $\A$ is symmetric when acting between
$A'$ and $A$, that is: $\tilde\Phi_{A,A'}=(\tilde\Phi_{A,A'})^{-1}$, \cite[Definition 2.9]{M}.

\begin{defn} \delabel{second def}
Let $\C$ and $\A$ be finite braided tensor categories where $(\A, \ot, I, \alpha, \tilde\lambda, \tilde\rho)$ is a monoidal category structure of $\A$. We say that $\A$ is a 
{\em braided $\C$-bialgebroid category} if:
\begin{enumerate}
\item there are braided tensor functors $\lambda, \rho: \C\to\A$ which factor through the M\"uger's center category $\A'$ and
provide $\A$ with a $\C$-bimodule category structure:
$$X\crta\ot A=\lambda(X)A, \qquad A\crta\ot X=A\rho(X)$$
where $X\in\C, A\in\A$; 
\item $(\A, \Delta, \varepsilon, a, l, r)$ is a $\C$-comonoidal category with the above $\C$-bimodule category structure;
\item there are natural isomorphisms:
\begin{enumerate}
\item $\Sigma: M\comp(\Delta\Del\Delta) \to \Delta\comp\ot$,
that is, for all $A, B\in\A$ there is an isomorphism: \\ $A_{(1)}B_{(1)}\Del_{\C} A_{(2)}B_{(2)}\stackrel{\Sigma}{\to} (AB)_{(1)}\Del_{\C}(AB)_{(2)}$;
\item $\Theta: \ot_{\C}\comp(\varepsilon\Del\varepsilon) \to \varepsilon\comp\ot_{\A}$;
that is, for all $A, B\in\A$ there is an isomorphism: \\ $\varepsilon(A)\varepsilon(B)\stackrel{\Theta}{\to} \varepsilon(AB)$;
\end{enumerate}
and isomorphisms:
\begin{enumerate}[(i)]
\item $\Sigma_0: I\Del_{\C}I\stackrel{\Sigma_0}{\to}\Delta(I)=I_{(1)}\Del_{\C}I_{(2)}$,
where $I$ is the unit object in $\A$;
\item $\Theta_0: I_{\C}\stackrel{\Theta_0}{\to}\varepsilon(I)$,
where $I_{\C}$ is the unit object in $\C$;
\end{enumerate}
so that the 12 coherence diagrams as in \deref{bialg noncomm} commute.
\end{enumerate}
\end{defn}

\begin{rem}
The only differences between the above two definitions are that in \deref{second def} we specify the $\C$-bimodule category structures and that
the natural isomorphism $\Sigma$ is formally differently defined.
\end{rem}

If $\A$ is a $\C$-bimodule category one may define tensor functors $\lambda, \rho: \C\to\A$ by $\lambda(C)=C\crta\ot I$ and $\rho(C)=I\crta\ot C$. 
Then the $\C$-bimodule category structure on $\A$ defined as in item 1) of the above definition coincides with the original one. 

\begin{lma} \lelabel{res sc}
Every $\A$-bimodule category $\M$ is a $\C$-bimodule category ${}_{\lambda}\M_{\rho}$ by the restriction of scalars functor:
$$X\crta\ot M=\lambda(X)\crta\ot M, \qquad M\crta\ot X=M\crta\ot\rho(X)$$
for every $X\in\C, M\in\M$.
\end{lma}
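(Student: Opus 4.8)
The plan is to verify directly that the two bimodule category axioms hold for ${}_{\lambda}\M_{\rho}$, exploiting that $\lambda$ and $\rho$ are tensor functors and that the $\A$-bimodule structure on $\M$ already satisfies its own associativity and unit constraints. First I would spell out the candidate structure: the left action $X\crta\ot M:=\lambda(X)\crta\ot M$ and right action $M\crta\ot X:=M\crta\ot\rho(X)$, with the left associativity constraint for $X,Y\in\C$ obtained by composing the tensor structure isomorphism $\lambda(X\ot Y)\iso\lambda(X)\lambda(Y)$ (applied to $M$) with the left $\A$-module associativity $(\lambda(X)\lambda(Y))\crta\ot M\iso\lambda(X)\crta\ot(\lambda(Y)\crta\ot M)$; symmetrically on the right using $\rho$; and the middle constraint $(X\crta\ot M)\crta\ot Y\iso X\crta\ot(M\crta\ot Y)$ being simply the $\A$-bimodule middle constraint $(\lambda(X)\crta\ot M)\crta\ot\rho(Y)\iso\lambda(X)\crta\ot(M\crta\ot\rho(Y))$. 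The unit constraints come from $\lambda(I_{\C})\iso I_{\A}$ and $\rho(I_{\C})\iso I_{\A}$ together with the $\A$-module unit constraints.

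Next I would check the pentagon for the left $\C$-action on ${}_{\lambda}\M$. Expanding $(X\ot Y\ot Z)\crta\ot M$ four ways, the diagram decomposes into two pieces glued along a common edge: one piece is exactly the pentagon for the monoidal functor $\lambda$ (the hexagon/pentagon coherence relating $\lambda$ applied to the associator $\mathfrak{a}_{X,Y,Z}$ in $\C$ with the composites of the tensor structure isomorphisms of $\lambda$), and the other is the pentagon for $\M$ as a left $\A$-module category applied to the objects $\lambda(X),\lambda(Y),\lambda(Z)$. Both commute by hypothesis, hence so does the composite, giving the left $\C$-module pentagon. The right $\C$-action pentagon is verified identically with $\rho$ in place of $\lambda$.

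Then I would check the two compatibility (``bimodule'') pentagons, namely the ones relating the middle constraint with the left, resp. right, associativity constraint. Consider $(X\ot Y)\crta\ot M\crta\ot Z$: one route uses the left $\C$-associativity then the middle constraint twice, the other uses the middle constraint then the left $\C$-associativity. After unfolding the definitions, the $\lambda$-tensor-structure isomorphism slides past the right action by $\rho(Z)$ simply by naturality of the left $\A$-module structure maps in the $\M$-variable, and what remains is precisely the corresponding compatibility pentagon for $\M$ as an $\A$-bimodule category applied to $\lambda(X),\lambda(Y)$ and $\rho(Z)$. The other compatibility pentagon is symmetric. The unit-compatibility triangles reduce likewise to the unit coherences of $\M$ over $\A$ together with the monoidal-functor unit axioms for $\lambda$ and $\rho$. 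Finally one notes all the constructed constraints are isomorphisms since they are composites of isomorphisms, and that the construction is functorial and compatible with bimodule functors, so that the restriction of scalars indeed defines the asserted $\C$-bimodule category.

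The main obstacle is bookkeeping: each coherence diagram for ${}_{\lambda}\M_{\rho}$ must be exhibited as a pasting of a coherence diagram for the \emph{monoidal functor} $\lambda$ (or $\rho$) and a coherence diagram for the \emph{$\A$-bimodule category} $\M$, and one must be careful that the interchange between the $\lambda$- (resp. $\rho$-) structure isomorphisms and the $\A$-actions is governed by naturality rather than by some further axiom. I expect no genuine difficulty beyond this — the statement is a routine but slightly tedious verification — so I would present the proof by describing the structure maps explicitly and then indicating, for each of the $\C$-bimodule coherence axioms, which functor-coherence of $\lambda,\rho$ and which $\A$-bimodule coherence of $\M$ it factors through, leaving the diagram-chase itself to the reader.
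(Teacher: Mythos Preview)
Your proposal is correct: it is the standard verification that restriction of scalars along a pair of tensor functors $\lambda,\rho:\C\to\A$ carries an $\A$-bimodule category to a $\C$-bimodule category, and your decomposition of each $\C$-coherence diagram into a monoidal-functor coherence for $\lambda$ (or $\rho$) glued to an $\A$-bimodule coherence for $\M$ is exactly right.

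The paper, however, gives no proof at all: the lemma is stated and immediately followed by the next lemma, so it is treated as evident. Your write-up is therefore strictly more detailed than what the paper offers. If anything, you could shorten it to a single sentence noting that the coherence constraints for ${}_{\lambda}\M_{\rho}$ are obtained by composing the monoidal structure isomorphisms of $\lambda$ and $\rho$ with the $\A$-bimodule constraints of $\M$, and that each $\C$-bimodule axiom then factors as a pasting of a monoidal-functor axiom and an $\A$-bimodule axiom; this matches the paper's level of detail while still indicating why the result holds.
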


\begin{lma}
For any finite symmetric tensor category $\C$ the category $\A=\C\Del\C$ is a $\C$-bialgebroid category with the functors
$\Delta: \C\Del\C \to (\C\Del\C) \Del_{\C} (\C\Del\C) \simeq \C\Del\C\Del\C$ and $\varepsilon:\C\Del\C\to\C$ given by
$$\Delta(X\Del Y)=(X\Del I) \Del_{\C} (I\Del Y)\quad\textnormal{and}\quad \varepsilon(X\Del Y)=XY.$$
\end{lma}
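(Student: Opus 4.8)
The statement to prove is that $\A = \C \Del \C$, equipped with the comultiplication $\Delta(X\Del Y) = (X\Del I)\Del_\C(I\Del Y)$ and counit $\varepsilon(X\Del Y) = XY$, is a $\C$-bialgebroid category in the sense of \deref{second def} (taking $\C$ symmetric, so $\A$ is braided). The plan is to verify the three clauses of \deref{second def} in order, relying on the already-established fact from \cite[Section 4]{Femic3} that $\C\Del\C$ is a $\C$-comonoidal category. The key observation that makes everything run smoothly is that the monoidal structure on $\A = \C\Del\C$ is the componentwise one, $(X\Del Y)\ot(X'\Del Y') = XX'\Del YY'$, so that all the structure functors are built out of the tensor product and symmetry of $\C$.

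\textbf{Step 1 (bimodule structure, clause 1).} I would take $\lambda, \rho : \C \to \C\Del\C$ to be $\lambda(X) = X\Del I$ and $\rho(X) = I\Del X$. These are evidently braided tensor functors, and since $\C$ is symmetric, \emph{every} object of $\C\Del\C$ is in the Müger center, so a fortiori $\lambda$ and $\rho$ factor through $(\C\Del\C)'$. The induced $\C$-bimodule structure is $X\crta\ot(Y\Del Z) = (X\Del I)(Y\Del Z) = XY\Del Z$ and $(Y\Del Z)\crta\ot X = Y\Del ZX$, which is exactly the standard $\C$-bimodule structure on $\C\Del\C$ used in \cite{Femic3}; this matches the remark just before \leref{res sc}.

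\textbf{Step 2 (comonoidal structure, clause 2).} This is essentially quoted: \cite[Section 4]{Femic3} establishes that $(\C\Del\C, \Delta, \varepsilon, a, l, r)$ is a $\C$-comonoidal category with the above $\Delta$ and $\varepsilon$ and the bimodule structure of Step 1. I would just recall that $\Delta$ and $\varepsilon$ are $\C$-bimodule functors (e.g.\ $\varepsilon(X\crta\ot(Y\Del Z)) = \varepsilon(XY\Del Z) = XYZ = X\crta\ot\varepsilon(Y\Del Z)$, using the middle-unit identifications), and that the coassociator $a$ and counitors $l,r$ come from the associativity and unit constraints of $\C$ transported through the canonical equivalence $(\C\Del\C)\Del_\C(\C\Del\C)\simeq\C\Del\C\Del\C$.

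\textbf{Step 3 (the isomorphisms $\Sigma,\Theta,\Sigma_0,\Theta_0$, clause 3 — the main point).} This is where the actual work is. For $\Sigma$: unravelling, $M\comp(\Delta\Del\Delta)$ sends $(X\Del Y)\Del(X'\Del Y')$ to $(X\Del I)(X'\Del I)\Del_\C(I\Del Y)(I\Del Y') = XX'\Del_\C Y Y'$, while $\Delta\comp\ot$ sends it to $\Delta(XX'\Del YY') = (XX'\Del I)\Del_\C(I\Del YY')$, and under the canonical equivalence $(\C\Del\C)\Del_\C(\C\Del\C)\simeq\C\Del\C\Del\C$ both become $XX'\Del I\Del YY'$ — so $\Sigma$ can be taken to be (essentially) the identity, up to the coherence isomorphisms of that equivalence. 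Similarly $\Theta : \varepsilon(X\Del Y)\varepsilon(X'\Del Y') = XYX'Y' \to XX'YY' = \varepsilon(XX'\Del YY')$ is built from the symmetry $\Phi_{Y,X'}$ of $\C$ (this is the one place the symmetry is genuinely used), and $\Sigma_0 : I\Del_\C I \to \Delta(I\Del I) = (I\Del I)\Del_\C(I\Del I)$ and $\Theta_0 : I_\C \to \varepsilon(I\Del I) = II \simeq I$ are unit constraints of $\C$. The main obstacle is then verifying the twelve coherence diagrams of \deref{bialg noncomm}. I expect these to reduce, after unwinding the canonical equivalences, to instances of the pentagon, triangle, and hexagon axioms for $\C$ together with the naturality and symmetry relation $\Phi\circ\Phi = \id$; no diagram should require anything beyond the coherence theorem for symmetric monoidal categories. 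I would present the constructions of $\Sigma,\Theta,\Sigma_0,\Theta_0$ explicitly, remark that the diagrams involving only $\Sigma$ and $\Delta$ (items 1, 2, 3, 7, 10 in the list) follow from coherence of $\C$ and are independent of the symmetry, that items 4, 5, 6, 8, 9, 11, 12 involving $\Theta$ use the symmetry relation in exactly one place each, and leave the remaining bookkeeping to the reader as routine verifications in the spirit of the coherence theorem.

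\textbf{Anticipated difficulty.} The genuine subtlety — and the reason \deref{bialg noncomm} and \deref{second def} had to be split — is that $\A\Del_\C\A$ is not a priori monoidal, so one must be careful that the functor $M$ in $\Sigma: M\comp(\Delta\Del\Delta)\to\Delta\comp\ot$ is the \emph{specific} $\C$-balanced functor from \cite[Lemma 2.8]{Femic3}, and that the coherence diagrams are interpreted with that $M$. For $\A=\C\Del\C$ this $M$ is the componentwise product, which is strictly associative and compatible with everything in sight, so the potential obstruction evaporates; but I would make this point explicit so the reader sees why the twelve diagrams do not hide any real content in this example.
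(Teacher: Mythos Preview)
Your proposal is correct and follows essentially the same approach as the paper: same $\lambda,\rho$, citation of \cite{Femic3} for the comonoidal structure, $\Sigma,\Sigma_0,\Theta_0$ taken as identities, and $\Theta$ built from the symmetry $\Phi_{Y,X'}$. The only difference is that the paper, after noting that only diagrams 4, 5, 6, 8, 9 need checking (since $\Sigma,\Sigma_0,\Theta_0$ are identities), dismisses 5, 6, 8, 9 as clear and verifies diagram 4 (the associativity coherence for $\Theta$) by an explicit braided-diagram computation, whereas you invoke the coherence theorem for symmetric monoidal categories; both are acceptable, though the paper's explicit check of diagram 4 is arguably more informative since that is the one diagram where the hexagon axiom for $\Phi$ is genuinely at work.
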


\begin{proof}
In \cite[Lemma 4.3]{Femic3} we proved that $\C\Del\C$ is a $\C$-comonoidal category with the given structures and the obvious $\C$-bimodule category structure. Set
$$\lambda(X)=X\Del I\quad\textnormal{and}\quad\rho(X)=I\Del X$$
for $X\in\C$ where $I$ represents the unit object in $\C$. To check the condition 3) in \deref{second def}, observe that
$\Sigma, \Sigma_o$ and $\Theta_0$ can be taken to be identities. Observe that the functors in the parts (a) and (b) are defined on objects of the form $A=X\Del Y, B=X'\Del Y'\in\A$. 
Note that $\ot_{\C}\comp(\varepsilon\Del\varepsilon)((X\Del Y)\odot(X'\Del Y'))
=XYX'Y'$ and $\varepsilon\comp\ot_{\A}((X\Del Y)\odot(X'\Del Y'))=XX'YY'$, where $\odot$ denotes the tensor product in $\A$. Define
$$\Theta=X\ot\Phi_{Y, X'}\ot Y': XYX'Y'=\varepsilon(X\Del Y)\varepsilon(X'\Del Y')\to\varepsilon(XX'\Del YY')=XX'YY'.$$
It is sufficient to check the coherence diagrams 4,5,6,8 and 9. Though, the diagrams 5, 6, 8 and 9 commute clearly, so let us see the diagram 4. It comes down to:
$$
\gbeg{9}{7}
\got{1}{\varepsilon(X\Del Y)} \gvac{2} \got{1}{\varepsilon(X'\Del Y')} \gvac{2} \got{2}{\varepsilon(X''\Del Y'')}\gnl
\gcl{1} \gvac{2} \gcl{1} \gvac{2} \gcl{1} \gnl
\gcl{1} \gvac{2} \glmpt\gcmp\gcmpb\gnot{\hspace{-1,2cm}\Theta}\grmpt \gnl
\gcn{1}{1}{1}{3} \gvac{3} \gcn{1}{1}{3}{1} \gnl
\gvac{1} \glmpt\gcmp\gcmpb\gnot{\hspace{-1,2cm}\Theta}\grmpt \gnl
\gvac{3} \gcl{1} \gnl
\gvac{3} \gob{1}{\varepsilon(XX'X''\Del YY'Y'')}
\gend=
\gbeg{9}{7}
\gvac{1} \got{1}{\varepsilon(X\Del Y)} \gvac{2} \got{1}{\varepsilon(X'\Del Y')} \gvac{2} \got{2}{\varepsilon(X''\Del Y'')}\gnl
\gvac{1} \gcl{1} \gvac{2} \gcl{1} \gvac{2} \gcl{1} \gnl
\gvac{1} \glmpt\gcmpb\gcmp\gnot{\hspace{-1,2cm}\Theta}\grmpt \gvac{2} \gcl{1} \gnl
\gvac{2} \gcn{1}{1}{1}{3} \gvac{3} \gcn{1}{1}{3}{1} \gnl
\gvac{3} \glmpt\gcmpb\gcmp\gnot{\hspace{-1,2cm}\Theta}\grmpt \gnl
\gvac{4} \gcl{1} \gnl
\gvac{4} \gob{1}{\varepsilon(XX'X''\Del YY'Y'').}
\gend
$$
Let us denote symbolically:
$$\Theta=
\gbeg{5}{4}
\got{1}{X} \got{1}{Y} \got{1}{X'} \got{1}{Y'} \gnl
\gcl{1} \gbr \gcl{1} \gnl
\gmu \gmu \gnl
\gob{2}{XX'} \gob{2}{YY'}
\gend
$$
then the above equation becomes:
$$
\gbeg{6}{7}
\got{1}{X} \got{1}{Y} \got{1}{X'} \got{1}{Y'} \got{1}{X''} \got{1}{Y''} \gnl
\gcl{4} \gcl{3} \gcl{1} \gbr \gcl{1} \gnl
\gvac{2} \gmu \gmu \gnl
\gvac{2} \gcn{1}{1}{2}{1} \gvac{1} \gcn{1}{1}{2}{1}  \gnl
\gvac{1} \gbr \gvac{1} \gcl{1} \gnl
\gmu \gwmu{3} \gnl
\gob{2}{X(X'X'')} \gvac{1} \gob{2}{Y(Y'Y'')}
\gend=
\gbeg{6}{7}
\got{1}{X} \got{1}{Y} \got{1}{X'} \got{1}{Y'} \got{1}{X''} \got{1}{Y''} \gnl
\gcl{1} \gbr \gcl{1} \gcl{3} \gcl{4} \gnl
\gmu \gmu \gnl
\gcn{1}{1}{2}{3} \gvac{1} \gcn{1}{1}{2}{3}   \gnl
\gvac{1} \gcl{1} \gvac{1} \gbr \gnl
\gvac{1} \gwmu{3} \gmu \gnl
\gvac{1} \gob{2}{(XX')X''} \gvac{2} \gob{1}{(YY')Y''}
\gend
$$
which is obviously fulfilled by naturality and the associativity constraint of $\C$.
\qed\end{proof}

When $\C$ and a $\C$-bialgebroid category $\A$ are symmetric as monoidal categories (as for example in the above Lemma), we will say that $\A$ is a {\em symmetric $\C$-bialgebroid category}.

\section{Harrison cohomology for a symmetric bialgebroid category} \selabel{Harrison}

Let $\A=(\A,\C,\lambda,\rho;\ot, I;\Delta,\varepsilon)$ be a $\C$-bialgebroid category. For $n\in \Nn$ we write
$\underbrace{\A\Del_{\C}\cdots \Del_{\C}\A}_n=\A^{\Del_{\C}n}$ and we set $\A^{\Del_{\C} 0}=\C$. Let
\begin{equation} \eqlabel{e_i's}
e_i^n:\ \A^{\Del_{\C} n}\to \A^{\Del_{\C}(n+1)}
\end{equation}
for $i\in \{0,\cdots,n+1\}$ be functors given by:
$$e_0^n=\rho\Del_{\C} \A^{\Del_{\C}n}~~;~~e_{n+1}^n=\A^{\Del_{\C} n}\Del_{\C} \lambda;$$
$$e_i^n=\A^{\Del_{\C}(i-1)}\Del_{\C}\Delta\Del_{\C}\A^{\Del_{\C}(n-i)},$$
for $i=1,\cdots, n$, that is:
\begin{eqnarray*}
e_0^n(A^1\Del_{\C}\cdots\Del_{\C} A^n)&=& I\Del_{\C} A^1\Del_{\C}\cdots\Del_{\C} A^n;\\
e_i^n(A^1\Del_{\C}\cdots\Del_{\C} A^n)&=&A^1\Del_{\C}\cdots\Del_{\C}\Delta(A^i)\Del_{\C}\cdots\Del_{\C} A^n;\\
e_{n+1}^n(A^1\Del_{\C}\cdots\Del_{\C} A^n)&=&A^1\Del_{\C}\cdots\Del_{\C} A^n\Del_{\C}I
\end{eqnarray*}
for every object $A^1\Del_{\C}\cdots\Del_{\C} A^n\in\A^{\Del_{\C}n}$. (Here we are abusing the notation: the functors $e^n_i$ are actually induced by the 
corresponding functors $\A^{\Del n}\to \A^{\Del_{\C}(n+1)}$ which are $\C$-balanced at $n-1$ places.) The definition on morphisms is similar.

For a consequence of the coassociativity of the functor $\Delta$ we have:

\begin{lma} \lelabel{e_i's rel}
For $i\geq j\in \{0,1,\cdots,n+1\}$ there is a natural isomorphism of functors:
\begin{equation}\eqlabel{2.1.1.1}
e_j^{n+1}\comp e_i^n \simeq e_{i+1}^{n+1}\circ e_j^n.
\end{equation}
\end{lma}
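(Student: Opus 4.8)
The plan is to establish \equref{2.1.1.1} by reducing it to the coassociativity and counitality constraints of the $\C$-comonoidal category $\A$, organized according to the three types of faces $e_i^n$ (the leftmost face $e_0^n=\rho\Del_{\C}\A^{\Del_{\C}n}$, the interior faces $e_i^n$ involving $\Delta$, and the rightmost face $e_{n+1}^n=\A^{\Del_{\C}n}\Del_{\C}\lambda$). For $i\ge j$ in $\{0,1,\dots,n+1\}$ the composites $e_j^{n+1}\comp e_i^n$ and $e_{i+1}^{n+1}\comp e_j^n$ are both functors $\A^{\Del_{\C}n}\to\A^{\Del_{\C}(n+2)}$, and the strategy is to compute the effect of each on a generic object $A^1\Del_{\C}\cdots\Del_{\C}A^n$ and observe that they either agree on the nose or differ by a canonical natural isomorphism coming from the comonoidal structure. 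Since the functors $e^n_i$ are genuinely induced by $\C$-balanced functors $\A^{\Del n}\to\A^{\Del_{\C}(n+1)}$, it suffices to check the identity after precomposing with the universal $\C$-balanced functor, i.e. at the level of the plain Deligne powers $\A^{\Del n}$, which is what I will do.

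First I would handle the case where both faces are interior, $1\le j\le i\le n$. Then $e_j^{n+1}\comp e_i^n$ applies $\Delta$ in slot $i$ of $\A^{\Del_{\C}n}$ (producing $n+1$ slots) and then $\Delta$ in slot $j$, while $e_{i+1}^{n+1}\comp e_j^n$ applies $\Delta$ in slot $j$ first and then $\Delta$ in the shifted slot $i+1$. When $j<i$ the two $\Delta$'s act in disjoint slots and the two composites literally coincide (using that $\Del_{\C}$ is functorial in each argument, so that $(\Delta\Del_{\C}\id)(\id\Del_{\C}\Delta)=(\id\Del_{\C}\Delta)(\Delta\Del_{\C}\id)$ up to the interchange, which here is an equality of functors). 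When $j=i$ both composites apply $\Delta$ twice to $A^i$, in the two different bracketings $(\Delta\Del_{\C}\id_{\A})\comp\Delta$ versus $(\id_{\A}\Del_{\C}\Delta)\comp\Delta$; these are connected precisely by the coassociativity isomorphism $a$ of the $\C$-comonoidal structure (one of the two coherence diagrams in the definition of a $\C$-comonoidal category), which supplies the required natural isomorphism in \equref{2.1.1.1}. Next I would treat the boundary cases: if $j=0$ then $e_0^{n+1}\comp e_i^n = \rho\Del_{\C}(\text{apply }\Delta\text{ in slot }i)$ and $e_{i+1}^{n+1}\comp e_0^n$ prepends $\rho$ and applies $\Delta$ in slot $i+1$ of the result — again the same functor since the $\rho$-slot and the $\Delta$-slot are disjoint; the same kind of disjointness argument covers $i=n+1$ (right $\lambda$-face) against any $j\le n+1$, and the mixed extreme case $j=0$, $i=n+1$. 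The only genuinely nontrivial identifications are the coassociativity one at $j=i$ (interior) and, if one unwinds the definitions carefully, the counit-type normalizations at $(j,i)=(0,n+1)$ and adjacent — but in our indexing these reduce to disjoint-slot equalities, so coassociativity is really the single structural input.

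The main obstacle I anticipate is bookkeeping rather than mathematical depth: one has to be careful that the ``disjoint slot'' equalities are genuine equalities of functors $\A^{\Del_{\C}n}\to\A^{\Del_{\C}(n+2)}$ and not merely isomorphisms, which rests on the strict functoriality of $\Del_{\C}$ in each variable and on the fact that inserting $\rho$ or $\lambda$ in an outer slot commutes strictly with inserting $\Delta$ in an inner slot. Equally, one must verify that the isomorphism produced in the $j=i$ case is natural in $A^1\Del_{\C}\cdots\Del_{\C}A^n$ — this is immediate from the naturality of the comonoidal associator $a$ together with the fact that tensoring a natural isomorphism on the remaining slots (via $\Del_{\C}$) preserves naturality. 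I do not expect to need any coherence theorem for $\A$ itself: a single application of the defining coassociativity constraint, sandwiched between identity functors in the untouched slots, gives \equref{2.1.1.1} in every case. I would present the interior case $j=i$ in full as the representative computation and dispatch the remaining cases with the remark that they are equalities of functors obtained by acting with $\Delta$, $\rho$, $\lambda$ in disjoint slots.
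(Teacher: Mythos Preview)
Your approach is exactly what the paper intends: it states the lemma only as ``a consequence of the coassociativity of the functor $\Delta$'' with no further argument, and your case analysis is the standard verification of the cosimplicial identities behind that remark.

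There is, however, a small gap in your case split. The extreme diagonal cases $j=i=0$ and $j=i=n+1$ are \emph{not} disjoint-slot equalities. For $j=i=0$ one has
\[
e_0^{n+1}e_0^n(A^1\Del_{\C}\cdots)=I\Del_{\C}I\Del_{\C}A^1\Del_{\C}\cdots
\quad\text{versus}\quad
e_1^{n+1}e_0^n(A^1\Del_{\C}\cdots)=\Delta(I)\Del_{\C}A^1\Del_{\C}\cdots,
\]
and dually at the right end. The natural isomorphism needed here is $\Sigma_0:I\Del_{\C}I\to\Delta(I)$, which belongs to the bialgebroid data rather than to the bare $\C$-comonoidal structure; equivalently, the right $\C$-linearity of $\Delta$ together with $\Sigma_0$ gives $\Delta\comp\rho\simeq I\Del_{\C}\rho(-)$, and symmetrically $\Delta\comp\lambda\simeq\lambda(-)\Del_{\C}I$. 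So coassociativity alone does not cover every case; the paper's one-line attribution is loose in the same way, but since the lemma is stated for a bialgebroid category $\Sigma_0$ is available and your argument goes through once you add these two boundary cases explicitly.
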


Assume from now on that 
$(\A,\C,\lambda,\rho;\ot, I;\Delta,\varepsilon)$ is a symmetric $\C$-bialgebroid category. 
By \cite[Lemma 2.8]{Femic2} and as we recalled in the last section, the category $\A\Del_{\C}\A$ is then symmetric tensor and similarly so are the categories
$\A^{\Del_{\C}n}$ for all $n\in\Nn$. The functors \equref{e_i's} are now symmetric tensor functors. 
Let $P$ be an additive covariant functor from symmetric tensor categories to abelian groups. Then we consider
$$\delta_n=\sum_{i=0}^{n+1} (-1)^{i-1}P(e^n_i):\ P(\A^{\Del_{\C} n})\to P(\A^{\Del_{\C} (n+1)}).$$
By \leref{e_i's rel} one shows that $\delta_{n+1}\circ \delta_n=0$, so we obtain a complex:
$$ \bfig \putmorphism(0, 0)(1, 0)[0`P(\C)`]{420}1a
\putmorphism(400, 0)(1, 0)[\phantom{P(\C)}`P(\A)`\delta_0]{500}1a
\putmorphism(900, 0)(1, 0)[\phantom{P(\C)}`P(\A^{\Del_{\C} 2})`\delta_1]{560}1a
\putmorphism(1460, 0)(1, 0)[\phantom{P(\A^{\Del_{\C} 2})}` P(\A^{\Del_{\C} 3})`\delta_2]{660}1a
\putmorphism(2100, 0)(1, 0)[\phantom{P(\A^{\Del_{\C}3})}`\cdots`\delta_3]{550}1a
\efig
$$
We will call it {\em Harrison complex $C(\A/\C, P)$}. We have:
$$Z^n(\A, P)=\Ker\delta_n,~~B^n(\A, P)=\im\delta_{n-1}~~~\textnormal{and}~~~H^n(\A, P)=Z^n(\A, P)/B^n(\A, P).$$
We will call $H^n(\A, P)$ the $n$-th Harrison cohomology group of $\A$ with values in $P$. Elements in $Z^n(\A,P)$ are called $n$-cocycles, and elements
in $B^n(\A,P)$ are called $n$-coboundaries.

\begin{rem} \rmlabel{Amitsur-Harrison}
For a symmetric $\C$-bialgebroid category $\A=\C\Del\C$ an object $A^1\Del_{\C}\cdots\Del_{\C} A^n\in\A^{\Del_{\C}n}$ corresponds to an object
$X^1\Del\cdots\Del X^{n+1}\in\C^{\Del (n+1)}$ and the above tensor functors $e_i^n:\ \A^{\Del_{\C} n}\to \A^{\Del_{\C}(n+1)}$ for $i=0,1,\cdots,n+1$ coincide
with the tensor functors $e_j^{n+1}:\ \C^{\Del (n+1)}\to \C^{\Del(n+2)}$ for $j=1,\cdots,n+2$ from \cite{Femic2}, so that the Harrison cohomology for $\A$ reduces to
the Amitsur cohomology for $\C$ and we have: $H^n(\A, P)=H^{n+1}(\C, P)$.
\end{rem}

Observe that for $\C$ and $\A$ symmetric the category $\dul{\Pic}(\A^{\Del_{\C} n})$ is symmetric monoidal by \cite[Proposition 3.4]{Femic2}.
The functors \equref{e_i's} induce the functors
$$E^n_i:\ \dul{\Pic}(\A^{\Del_{\C} n})\to \dul{\Pic}(\A^{\Del_{\C} (n+1)})$$
which are formally defined in the same fashion as in \cite[Section 5.1]{Femic2}, that is:
\begin{equation}\eqlabel{M_i}
E^n_i(\M)=\M_i=\M\Del_{\A^{\Del_{\C} n}} {}_{e^n_i}\A^{\Del_{\C} (n+1)}
\end{equation}
and
$$E^n_i(F)=F_i=F\Del_{\A^{\Del_{\C} n}} {}_{e^n_i}\A^{\Del_{\C} (n+1)}$$
for every object $\M$, every functor $F$ in $\dul\Pic(\A^{\Del_{\C} n})$ and $i=0, \cdots, n+1$. Here we use the notation from \equref{res}.

The construction of the Harrison cohomology groups $H^n(\A,\dul{\Pic})$
is done {\em mutatis mutandis} as that of the Amitsur cohomology groups $H^n(\C,\dul{\Pic})$. Namely, we have, as in \cite[Lemma 5.6]{Femic2} that for
$i\geq j\in \{1,\dots,n+1\}$ and $\M\in \dul{\Pic}(\A^{\Del_{\C} n})$ there is a natural equivalence:
\begin{equation}\eqlabel{2.2.1.1b}
\M_{ij}\simeq \M_{j(i+1)}
\end{equation}
where $\M_{ij}=E_j^{n+1}\comp E_i^n(\M)$. For every non-zero $n\in\mathbb N$, we define a functor
\begin{equation} \eqlabel{delta_n Pic}
\delta_{n}:\ \dul{\Pic}(\A^{\Del_{\C} n})\to \dul{\Pic}(\A^{\Del_{\C} (n+1)}),
\end{equation}
by
$$\delta_{n}(\M)=\M_0\Del_{\A^{\Del_{\C} (n+1)}}\M^{op}_1\Del_{\A^{\Del_{\C} (n+1)}}\cdots
\Del_{\A^{\Del_{\C} (n+1)}}\N_{n+1},$$
$$\delta_{n}(F)=F_0\Del_{\A^{\Del_{\C} (n+1)}} (F^{op}_1)^{-1}\Del_{\A^{\Del_{\C} (n+1)}}\cdots \Del_{\A^{\Del_{\C} (n+1)}} (G_{n+1})^{\pm 1},$$
with $\N=\M$ or $\M^{op}$ and $G=F$ or $F^{op}$ depending on whether $n$ is even or odd.
We also have:
\begin{equation}\eqlabel{double delta M}
\delta_{n+1}\delta_{n}(\M)=(\boxtimes_{\A^{\Del_{\C} (n+2)}})_{j=1}^{n+2}(\boxtimes_{\A^{\Del_{\C} (n+2)}})_{i=0}^{j-1} \hspace{0,2cm} (\M_{ij}\Del_{\A^{\Del_{\C} (n+2)}} \M_{ij}^{op}),
\end{equation}
\begin{equation}\eqlabel{double delta F}
\delta_{n+1}\delta_{n}(F)=(\boxtimes_{\A^{\Del_{\C} (n+2)}})_{j=1}^{n+2}(\boxtimes_{\A^{\Del_{\C} (n+2)}})_{i=0}^{j-1} \hspace{0,2cm} (F_{ij}\Del_{\A^{\Del_{\C} (n+2)}} (F_{ij}^{op})^{-1}),
\end{equation}
so we have a natural equivalence:
$$\lambda_{\M}= (\boxtimes_{\A^{\Del_{\C} (n+2)}})_{j=1}^{n+2}(\boxtimes_{\A^{\Del_{\C} (n+2)}})_{i=0}^{j-1} \ev_{\M_{ij}}:\ \delta_{n+1}\delta_n(\M)\to \A^{\Del_{\C} (n+2)}.$$

Now we define the category $\dul{Z}^n(\A,\dul{\Pic})$ whose objects are pairs $(\M,\alpha)$, where $\M\in \dul{\Pic}(\A^{\Del_{\C} n})$ and
$\alpha:\ \delta_n(\M)\to \A^{\Del_{\C}(n+1)}$ is an equivalence of $\A^{\Del_{\C}(n+1)}$-module categories such that $\delta_{n+1}(\alpha)\simeq\lambda_{\M}$.
A morphism $(\M,\alpha)\to (\N,\beta)$ is given by an equivalence $F:\M\to \N$ of $\A^{\Del_{\C} n}$-module categories satisfying $\beta\circ \delta_n(F)\simeq\alpha$.
One has that $\dul{Z}^n(\A,\dul{\Pic})$ is a symmetric monoidal category, whose tensor product is given by: $(\M,\alpha)\ot (\N,\beta)=
(\M\Del_{\A^{\Del_{\C} n}}\N,\alpha\Del_{\A^{\Del_{\C} (n+1)}}\beta)$ and unit object by $(\A^{\Del_{\C} n}, \A^{\Del_{\C}(n+1)})$. Moreover, all the objects in
$\dul{Z}^n(\A,\dul{\Pic})$ are invertible. Its Grothendieck group:
$$K_0\dul{Z}^n(\A,\dul{\Pic})={Z}^n(\A,\dul{\Pic})$$
is the group of n-cocycles. Let
$$d_{n-1}:\ \dul{\Pic}(\A^{\Del_{\C} (n-1)})\to \dul{Z}^n(\A,\dul{\Pic})$$
be a strongly monoidal functor given by:
$d_{n-1}(\N)=(\delta_{n-1}(\N),\lambda_{\N})$. Denote by $B^n(\A,\dul{\Pic})$ the subgroup of $Z^n(\A, \dul{\Pic})$, whose elements represented by $d_{n-1}(\N)$,
for some $\N\in\dul{\Pic}(\A^{\Del_{\C} (n-1)})$. We finally define:
$$H^n(\A,\dul{\Pic})=Z^n(\A,\dul{\Pic})/B^n(\A,\dul{\Pic}).$$

Similarly to what we commented in \rmref{Amitsur-Harrison}, for $\A=\C\Del\C$ we have: $H^n(\A, \dul{\Pic})=H^{n+1}(\C, \dul{\Pic})$. In the next section
we will interpret the cohomologu group $H^2(\A, \dul{\Pic})$, so that in the case  $\A=\C\Del\C$ we get an interpretation for $H^3(\C, \dul{\Pic})$.

\medskip

The analogous result to \cite[Theorem 6.2]{Femic2} holds for a symmetric $\C$-bialgebroid category $\A$. The Sections 5 and 6 of \cite{Femic2}
can be done {\em mutatis mutandis} for $\A$. This bialgebroid version is a categorification of \cite[Theorem 9.3.5]{th}.

\begin{thm}\thlabel{VZb}
Let $\C$ be a symmetric finite tensor category and $\A$ a symmetric $\C$-bialgebroid category. There is a long exact sequence
\begin{eqnarray}\eqlabel{VZ seq}
1&\longrightarrow&H^1(\A,\Inv)\stackrel{\alpha_1}{\longrightarrow}H^0(\A,\dul{\Pic})\stackrel{\beta_1}{\longrightarrow}H^0(\A,{\Pic})\\
&\stackrel{\gamma_1}{\longrightarrow}&H^2(\A,\Inv)\stackrel{\alpha_2}{\longrightarrow}H^1(\A,\dul{\Pic})\stackrel{\beta_2}{\longrightarrow}H^1(\A,{\Pic})\nonumber\\
&\stackrel{\gamma_2}{\longrightarrow}&H^3(\A,\Inv)\stackrel{\alpha_3}{\longrightarrow}H^2(\A,\dul{\Pic})\stackrel{\beta_3}{\longrightarrow}H^2(\A,{\Pic})\nonumber\\
&\stackrel{\gamma_3}{\longrightarrow}&\cdots \nonumber
\end{eqnarray}
\end{thm}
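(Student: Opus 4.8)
We only indicate the strategy, as the argument runs parallel to Sections~5 and 6 of \cite{Femic2}. The plan is to transport, verbatim where possible, the construction of the Villamayor--Zelinsky sequence for a symmetric finite tensor category to the present relative setting, replacing the Amitsur complex $\C^{\Del\bullet}$ by the Harrison complex $\A^{\Del_{\C}\bullet}$, the cosimplicial tensor functors of \cite{Femic2} by the functors $e^n_i$ of \equref{e_i's}, and the Deligne product $\Del$ by the $\C$-relative product $\Del_{\C}$ throughout. Two facts make this transport legitimate and are already at our disposal: every category $\A^{\Del_{\C}n}$ is symmetric tensor --- because $\lambda$ and $\rho$ are braided and factor through the M\"uger center, \cite[Lemma 2.8]{Femic2} --- so that each $\dul{\Pic}(\A^{\Del_{\C}n})$ is a symmetric categorical group by \cite[Proposition 3.4]{Femic2} all of whose objects satisfy the property $P$; and the cosimplicial identities \equref{2.1.1.1} of \leref{e_i's rel} hold, which is the only combinatorial input needed to obtain the vanishing of $\delta_{n+1}\delta_n$ for the coefficient functors $\Inv$ and $\Pic$, together with the canonical trivializations $\lambda_{\M}\colon\delta_{n+1}\delta_n(\M)\to\A^{\Del_{\C}(n+2)}$ of \equref{double delta M} for $\dul{\Pic}$ (via the equivalences \equref{2.2.1.1b}).

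The three coefficient data are: the functor $\Inv$ sending a symmetric tensor category $\E$ to the group of automorphisms of the unit object of $\dul{\Pic}(\E)$, equivalently the group of invertible objects of $\E$ satisfying the property $P$; the symmetric categorical group $\dul{\Pic}$; and its Grothendieck group $\Pic$. The groups $H^n(\A,\Inv)$ are the cohomology of the Harrison complex $C(\A/\C,\Inv)$, while $H^n(\A,\dul{\Pic})$ and $H^n(\A,\Pic)$ are the categorical cohomology groups built above out of the categories $\dul{Z}^n(\A,\dul{\Pic})$, the functors $E^n_i$ of \equref{M_i}, the coboundary functors $\delta_n$ of \equref{delta_n Pic} and the strongly monoidal functors $d_{n-1}$. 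The three families of connecting homomorphisms are defined as in \cite[Section~6]{Femic2}: $\beta_n$ is induced by the monoidal functor $\dul{\Pic}(\A^{\Del_{\C}\bullet})\to\Pic(\A^{\Del_{\C}\bullet})$ passing to isomorphism classes; $\alpha_n$ realizes an $\Inv$-valued $n$-cocycle $\varphi$, seen as a self-equivalence of the relevant unit object of $\dul{\Pic}(\A^{\Del_{\C}(n-1)})$, as the class of the pair formed by that unit object equipped with $\varphi$ as its trivialization; and $\gamma_n$ is the obstruction map: for an $(n-1)$-cocycle in $\Pic$ represented by $[\M]$ with $\M\in\dul{\Pic}(\A^{\Del_{\C}(n-1)})$, triviality of $[\delta_{n-1}(\M)]$ yields an equivalence $\tau\colon\delta_{n-1}(\M)\to\A^{\Del_{\C}n}$, and the discrepancy $\lambda_{\M}\comp\delta_n(\tau)^{-1}$ is a self-equivalence of $\A^{\Del_{\C}(n+1)}$, hence --- via the identification above --- an $\Inv$-valued $(n+1)$-cochain, which one checks is a cocycle; its class is $\gamma_n([\M])$.

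The bulk of the proof is the verification of exactness, which splits into three essentially different checks recurring periodically. At the $\dul{\Pic}$-terms one shows $\ker\beta_n=\im\alpha_n$: a $\dul{\Pic}$-valued cocycle whose underlying object becomes trivial in $\Pic$ differs from the unit by an equivalence, and rechoosing that equivalence exhibits the cocycle as one supported on the unit object, i.e.\ in the image of $\alpha_n$. At the $\Pic$-terms one shows $\ker\gamma_n=\im\beta_n$: if the obstruction cocycle of $[\M]$ is a coboundary, that coboundary corrects $\tau$ into a trivialization compatible with $\lambda_{\M}$, so that $(\M,\tau)$ becomes an object of $\dul{Z}^{n-1}$ with $\beta_n$-image $[\M]$. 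At the $\Inv$-terms one shows $\ker\alpha_{n+1}=\im\gamma_n$: if $\varphi$ lies in $\ker\alpha_{n+1}$ then $(\A^{\Del_{\C}n},\varphi)$ is cohomologically trivial, i.e.\ equivalent to some $d_{n-1}(\N)=(\delta_{n-1}(\N),\lambda_{\N})$, and the witnessing equivalence forces $[\N]$ to be an $(n-1)$-cocycle in $\Pic$ and, upon unwinding the definitions, identifies $\varphi$ with $\gamma_n([\N])$. In each case the inclusion $\im\subseteq\ker$ is immediate from the vanishing of $\delta_{n+1}\delta_n$ and the definition of $\lambda_{\M}$; the reverse inclusion is the substance of the argument and rests on \leref{basic lema} (to cancel $\Del_{\C}$ against invertible module categories and against equivalences), on \leref{alfa dagger} (to handle the passages to the opposite categories ${}^{op}\M$ and the evaluation and coevaluation equivalences occurring in $\delta_n$), and on \coref{comp-tensor} together with the property $P$ (to rewrite composites of morphisms in $\dul{\Pic}(\A^{\Del_{\C}\bullet})$ as $\Del_{\C}$-products). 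I expect the main obstacle to be the well-definedness of $\gamma_n$ --- its independence of the representative $\M$ and of $\tau$ --- together with exactness at the $\Pic$- and $\Inv$-terms, where the bookkeeping with $\Del_{\C}$ and with opposite categories is heaviest; the reassuring point is that these constructions interact with the functors $e^n_i$ exactly as in the absolute case of \cite{Femic2}, so that no new phenomenon arises. Finally, for $\A=\C\Del\C$ the sequence specializes, via \rmref{Amitsur-Harrison}, to the Amitsur--Villamayor--Zelinsky sequence \cite[Theorem~6.2]{Femic2} for $\C$, and the whole statement categorifies \cite[Theorem~9.3.5]{th}.
\qed
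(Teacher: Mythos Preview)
Your proposal is correct and follows exactly the approach the paper takes: the paper's own ``proof'' is the single paragraph preceding the statement, which says that Sections~5 and~6 of \cite{Femic2} carry over \emph{mutatis mutandis} upon replacing $\C^{\Del\bullet}$ by $\A^{\Del_{\C}\bullet}$, $\Del$ by $\Del_{\C}$, and the Amitsur cosimplicial functors by the $e^n_i$ of \equref{e_i's}, and notes that this categorifies \cite[Theorem~9.3.5]{th}. Your write-up is simply a more explicit unpacking of that same transport argument --- spelling out the definitions of $\alpha_n,\beta_n,\gamma_n$ and the three exactness checks --- so there is no divergence in method, only in level of detail.
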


The functors $P$ from symmetric tensor categories to abelian groups appearing in the above theorem are $\Inv$ and $\Pic$ in the first and the third column, respectively.
We saw that $\Pic(\A)$ is the Picard group of $\A$, and $\Inv(\A)$ is the group of isomorphism classes of invertible objects in $\A$.

\section{Quasi-monoidal structures on $\A\x\Mod$} \selabel{Quasi-mon}

Let $\A$ be a symmetric $\C$-bialgebroid category. By a quasi-monoidal structure on $\A\x\Mod$ we will understand a tensor product on the 0-cells of the 2-category $\A\x\Mod$ 
together with an associativity constraint satisfying the pentagonal axiom up to a natural equivalence.
In this section we will present a way to generate quasi-monoidal structures on $\A\x\Mod$.
Here ``quasi'' on the one hand
alludes to the absence of the unit 0-cell and the corresponding unity constraints, and on the other hand, it refers to the fact that we are not interested in the rest of the axioms 
for a monoidal 2-category structure on $\A\x\Mod$.
From the point of view of a monoidal structure on the 2-category $\A\x\Mod$, defined in \cite{KV}, our quasi-monoidal structure means a tensor
product (M2) satisfying the axiom (M9), as they are numerated in the definition presented in \cite{Gr}.

We will consider every $\A$-module category as a one-side $\A$-bimodule category, because $\A$ is symmetric. On the other hand, by \leref{res sc}
we will consider every $\A$-module category as a $\C$-bimodule category (not one-sided!). Moreover, in this setting $\A\Del_{\C}\A$ is also symmetric and
we will consider every left $\A\Del_{\C}\A$-module category as one-sided $\A\Del_{\C}\A$-bimodule category.

\begin{lma}
There is a functor $-\Del_{\C}-: \A\x\Mod\times\A\x\Mod\to\A\Del_{\C}\A\x\Mod$ defined on objects and morphisms as follows:
$$(\M, \N)\mapsto\M\Del_{\C}\N\quad\textnormal{and}\quad(\F, \G)\mapsto\F\Del_{\C}\G.$$
\end{lma}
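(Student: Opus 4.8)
The plan is to construct the functor $-\Del_{\C}-: \A\x\Mod\times\A\x\Mod\to\A\Del_{\C}\A\x\Mod$ in stages, checking at each step that the assignments are well defined and functorial. First I would recall the setup: since $\A$ is symmetric, $\A\Del_{\C}\A$ is a symmetric finite tensor category by \cite[Lemma 2.8]{Femic2} (as recalled in \seref{Harrison}), with tensor product induced by $(A\Del B)\odot(A'\Del B')=AA'\Del_{\C}BB'$. Given $\A$-module categories $\M$ and $\N$, by \leref{res sc} we regard them as $\C$-bimodule categories ${}_{\lambda}\M_{\rho}$ and ${}_{\lambda}\N_{\rho}$; in particular $\M$ is a right $\C$-module category and $\N$ a left $\C$-module category via $\rho$ and $\lambda$ respectively, so the relative Deligne product $\M\Del_{\C}\N$ is defined as a finite abelian category.

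The key step is to equip $\M\Del_{\C}\N$ with a left $\A\Del_{\C}\A$-module category structure. The action is the obvious one: $(A\Del B)\crta\ot(M\Del_{\C}N):=(A\crta\ot M)\Del_{\C}(B\crta\ot N)$, where on the left $A\crta\ot M$ uses the left $\A$-action on $\M$ and $B\crta\ot N$ the left $\A$-action on $\N$. I would first check that the functor $\A\Del\A\times\M\Del_{\C}\N\to\M\Del_{\C}\N$ so described is $\C$-balanced in the $\A\Del\A$-variable (using the symmetry to move $\C$-objects across, exactly as in the balancing argument for the tensor product $M$ on $\A\Del_{\C}\A$ recalled in \seref{Bialgebroids}), hence factors through $\A\Del_{\C}\A$, and then verify the pentagon and unit axioms for the module category structure — these reduce componentwise to the corresponding axioms for the $\A$-actions on $\M$ and on $\N$ together with the associativity and unit constraints of $\A\Del_{\C}\A$, so they hold by naturality. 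Likewise, given module functors $\F:\M_1\to\M_2$ and $\G:\N_1\to\N_2$ in $\A\x\Mod$, the functor $\F\Del_{\C}\G$ is $\C$-balanced precisely because $\F$ is right $\C$-linear (via $\rho$) and $\G$ is left $\C$-linear (via $\lambda$), so it descends to $\M_1\Del_{\C}\N_1\to\M_2\Del_{\C}\N_2$, and it is left $\A\Del_{\C}\A$-linear because $\F$ and $\G$ are left $\A$-linear; the module functor constraint is built componentwise from those of $\F$ and $\G$.

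Finally I would check functoriality of the assignment: $(\Id_{\M},\Id_{\N})\mapsto\Id_{\M\Del_{\C}\N}$ is immediate from the construction of $\Del_{\C}$ on functors, and $(\F'\comp\F)\Del_{\C}(\G'\comp\G)\simeq(\F'\Del_{\C}\G')\comp(\F\Del_{\C}\G)$ follows from the universal property of the relative Deligne product (both sides are induced by the same balanced functor $\M_1\Del\N_1\to\M_3\Del_{\C}\N_3$). This also handles natural transformations if one wants the statement at the level of 2-categories, but as stated only the ordinary functoriality on 0- and 1-cells is claimed.

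I expect the main obstacle to be the $\C$-balancing of the action functor: one must verify carefully that the isomorphisms used to move an object $X\in\C$ from the right factor of $\A\Del\A$ to the left factor — which involve the braided tensor functors $\lambda,\rho$ factoring through the M\"uger center $\A'$ — are compatible with moving $X$ past the module category objects, i.e. that the balancing constraint on $\A\Del_{\C}\A$ and the ones on $\M\Del_{\C}\N$ fit together into a coherent isomorphism realizing $(A\crta\ot X\Del B)\crta\ot(\cdots)\simeq(A\Del X\crta\ot B)\crta\ot(\cdots)$. This is essentially a repetition of the balancing computation already carried out in \cite[Lemma 2.8]{Femic2} and \cite[Lemma 2.8]{Femic3}, now with module category objects appended, and it goes through \emph{mutatis mutandis} because $\lambda$ and $\rho$ land in $\A'$, so no new symmetry restriction on the braiding of $\A$ is needed beyond what the definition of a braided $\C$-bialgebroid category already provides.
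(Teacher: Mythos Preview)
Your proposal is correct and thorough. The paper states this lemma without proof, treating it as routine; your detailed verification --- constructing the $\A\Del_{\C}\A$-action on $\M\Del_{\C}\N$ via the componentwise formula, checking $\C$-balancedness by the same argument that makes $\A\Del_{\C}\A$ a tensor category, and verifying functoriality from the universal property of $\Del_{\C}$ --- is exactly the kind of argument the author is implicitly relying on, so there is nothing to compare.
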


\smallskip

In our setting the comultiplication functor $\Delta:\A\to\A\Del_{\C}\A$ is a tensor functor, let us consider the restriction of scalars functor $\R: \A\Del_{\C}\A\x\Mod\to\A\x\Mod$.
We have that any left $\A\Del_{\C}\A$-module category $\M$ is a left $\A$-module category via $\Delta$, a fact which we will denote by ${}_{\Delta}\M$, as in \equref{res}.

Let us now observe the following composition of functors:
\begin{equation} \eqlabel{tensor functor}
\bfig
\putmorphism(-400, 0)(1, 0)[\A\x\Mod\times\A\x\Mod` \A\Del_{\C}\A\x\Mod` -\Del_{\C}-]{1250}1a
\putmorphism(850, 0)(1, 0)[\phantom{\A\Del_{\C}\A\x\Mod} ` \A\Del_{\C}\A\x\Mod ` F]{950}1a
\putmorphism(1800, 0)(1, 0)[\phantom{\A\Del_{\C}\A\x\Mod}` \A\x\Mod ` \R]{800}1a
\efig
\end{equation}
where $F$ is an autoequivalence functor of $\A\Del_{\C}\A\x\Mod$. Recall that it is given by $F\iso\Pp\Del_{\A\Del_{\C}\A}-$ for some invertible
one-sided $\A\Del_{\C}\A$-bimodule category $\Pp$, by \coref{autofun}, (observe that $\Pp=F(\A\Del_{\C}\A)$).
For $\M, \N\in\A\x\Mod$ the resulting $\A$-module category from the above composition we will denote by:
\begin{equation} \eqlabel{new tensor}
\M\boxdot_F\N={}_{\Delta}(\Pp\Del_{\A\Del_{\C}\A}(\M\Del_{\C}\N))={}_{\Delta}\Pp\Del_{\A\Del_{\C}\A}(\M\Del_{\C}\N).
\end{equation}
Observe that by the definition of $\M\boxdot_F\N$ its arbitrary object is of the form: $P\Del_{\A\Del_{\C}\A}(M\Del_{\C}N)$ for $M\in\M, N\in\N$ and $P\in\Pp$. 
Moreover, given $A\in\A$, we have:
\begin{multline*}
A\crta\ot (P\Del_{\A\Del_{\C}\A}(M\Del_{\C}N))=((A_{(1)}\Del_{\C}A_{(2)}) \crta\ot P) \Del_{\A\Del_{\C}\A}(M\Del_{\C}N)\iso \\
(P\Del_{\A\Del_{\C}\A}(A_{(1)}\Del_{\C}A_{(2)}))\Del_{\A\Del_{\C}\A}(M\Del_{\C}N) \iso P\Del_{\A\Del_{\C}\A}((A_{(1)}\crta\ot M)\Del_{\C}(A_{(2)}\crta\ot N)).
\end{multline*}

\bigskip

Before we investigate associativity constraints for this newly defined tensor product in $\A\x\Mod$, let us develop some arithmetic in $\dul{\Pic}(\bullet)$
using \equref{M_i}. First of all, observe that given $\M\in\dul{\Pic}(\A^{\Del_{\C}n})$ we have: $\M_0=\A\Del_{\C}\M$ and $\M_{n+1}=\M\Del_{\C}\A$. Furthermore, given
$\N\in\dul{\Pic}(\A^{\Del_{\C}(n+1)}), \Ll\in\dul{\Pic}(\A^{\Del_{\C}(n+2)})$ and $i\in\{0,\dots, n+1\}, j\in\{0,\dots, n+2\}$, we find:
\begin{equation} \eqlabel{cuenta 1}
\M\Del_{\A^{\Del_{\C}n}}{}_{e^n_i}\N=\M\Del_{\A^{\Del_{\C} n}} {}_{e^n_i}(\A^{\Del_{\C} (n+1)}\Del_{\A^{\Del_{\C} (n+1)}}\N)={}_{e^n_i}\M_i\Del_{\A^{\Del_{\C} (n+1)}}\N
\end{equation}
as an $\A^{\Del_{\C} n}\x\A^{\Del_{\C} (n+1)}$-bimodule category. Similarly:
\begin{equation} \eqlabel{cuenta 2}
\M\Del_{\A^{\Del_{\C}n}}{}_{(e^{n+1}_j \comp e^n_i)}\Ll=\M\Del_{\A^{\Del_{\C} n}} {}_{(e^{n+1}_j \comp e^n_i)}(\A^{\Del_{\C} (n+2)}\Del_{\A^{\Del_{\C} (n+2)}}\Ll)=
{}_{(e^{n+1}_j \comp e^n_i)}\M_{ij}\Del_{\A^{\Del_{\C} (n+2)}}\Ll
\end{equation}
as an $\A^{\Del_{\C} n}\x\A^{\Del_{\C} (n+2)}$-bimodule category.

\bigskip

Now let $F\iso\Pp\Del_{\A\Del_{\C}\A}-$ be fixed for some invertible one-sided $\A\Del_{\C}\A$-bimodule category $\Pp$. Take $\M, \N, \Ll\in\A\x\Mod$.
Let us denote for simplicity reasons $\M\boxdot_F\N=\M\boxdot\N$. We find:
\begin{multline} \eqlabel{levi alfa}
(\M\boxdot\N)\boxdot\Ll
= {}_{\Delta}(\Pp\Del_{\A\Del_{\C}\A}({}_{\Delta}(\Pp\Del_{\A\Del_{\C}\A}(\M\Del_{\C}\N))\Del_{\C}\Ll))\\
= {}_{\Delta}(\Pp\Del_{\A\Del_{\C}\A}{}_{e_1^2=\Delta\Del_{\C}\Id}(\Pp_3\Del_{\A^{\Del_{\C}3}}(\M\Del_{\C}\N\Del_{\C}\Ll)))\\
\stackrel{\equref{cuenta 1}}{=} {}_{(\Delta\Del_{\C}\Id)\Delta}(\Pp_1\Del_{\A^{\Del_{\C}3}} \Pp_3\Del_{\A^{\Del_{\C}3}}(\M\Del_{\C}\N\Del_{\C}\Ll)).
\end{multline}
In order to compute $\M\boxdot(\N\boxdot\Ll)$, we first record the following, whose proof is direct:

\begin{lma}
There is an equivalence of left $\A\Del_{\C}\A$-module categories:
$$\M\Del_{\C} {}_{\Delta}(\Pp\Del_{\A\Del_{\C}\A}(\N\Del_{\C}\Ll)) \simeq {}_{e_2^2}(\Pp_0\Del_{\A^{\Del_{\C}3}}(\M\Del_{\C}\N\Del_{\C}\Ll))$$
given by (abusing the notation):
$$(A\crta\ot M)\Del_{\C}(P\Del_{\C}N\Del_{\C}L) \mapsfrom (A\Del_{\C}  P)\Del_{\A^{\Del_{\C}3}}(M\Del_{\C}N\Del_{\C}L)$$
for every $A\in\A, M\in\M, P\in\Pp, N\in\N, L\in\Ll$. 
\end{lma}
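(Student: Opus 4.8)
\emph{Proof plan.} The plan is to realize the stated assignment as a functor obtained by composing three canonical equivalences; this simultaneously yields well-definedness, $\A\Del_{\C}\A$-linearity, and the fact that it is an equivalence. The first step rewrites the right-hand side: since $\Pp_0=E_0^2(\Pp)=\Pp\Del_{\A\Del_{\C}\A}{}_{e_0^2}\A^{\Del_{\C}3}$ by \equref{M_i}, applying \equref{cuenta 1} with $n=2$, $i=0$ (with $\Pp$ in the role of $\M$ and $\M\Del_{\C}\N\Del_{\C}\Ll$ in the role of $\N$ there) gives an equivalence
$$\Pp_0\Del_{\A^{\Del_{\C}3}}(\M\Del_{\C}\N\Del_{\C}\Ll)\;\simeq\;\Pp\Del_{\A\Del_{\C}\A}{}_{e_0^2}(\M\Del_{\C}\N\Del_{\C}\Ll)$$
of $\A\Del_{\C}\A\x\A^{\Del_{\C}3}$-bimodule categories. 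Writing the typical object of $\Pp_0=\A\Del_{\C}\Pp$ as $A\Del_{\C}P$, it sends $(A\Del_{\C}P)\Del_{\A^{\Del_{\C}3}}(M\Del_{\C}N\Del_{\C}L)$ to $P\Del_{\A\Del_{\C}\A}\bigl((A\crta\ot M)\Del_{\C}N\Del_{\C}L\bigr)$, using the unit constraints $I\crta\ot N\iso N$, $I\crta\ot L\iso L$.

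The second step uses that $e_0^2(A^1\Del_{\C}A^2)=I\Del_{\C}A^1\Del_{\C}A^2$ with $I=I_{\A}$, so that restriction along $e_0^2$ leaves the first tensorand untouched: the unit constraint $I\crta\ot M\iso M$ gives an equivalence of $\A\Del_{\C}\A$-module categories ${}_{e_0^2}(\M\Del_{\C}\N\Del_{\C}\Ll)\simeq\M\Del_{\C}(\N\Del_{\C}\Ll)$ in which $\A\Del_{\C}\A$ acts through the factor $\N\Del_{\C}\Ll$ only, while $\M$ is regarded merely as a $\C$-bimodule category via \leref{res sc}. Since this action is inert on the $\M$-factor and $\M\Del_{\C}-$ is cocontinuous, being a left adjoint (cf. \equref{adj f} and \cite[Corollary 3.22]{Gr}), it commutes with the relative Deligne tensor product, whence
$$\Pp\Del_{\A\Del_{\C}\A}\bigl(\M\Del_{\C}(\N\Del_{\C}\Ll)\bigr)\;\simeq\;\M\Del_{\C}\bigl(\Pp\Del_{\A\Del_{\C}\A}(\N\Del_{\C}\Ll)\bigr).$$
Composing this with the equivalence of the first step and chasing objects recovers exactly $(A\Del_{\C}P)\Del_{\A^{\Del_{\C}3}}(M\Del_{\C}N\Del_{\C}L)\mapsto(A\crta\ot M)\Del_{\C}\bigl(P\Del_{\A\Del_{\C}\A}(N\Del_{\C}L)\bigr)$, the formula in the statement.

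The third step checks compatibility with the two $\A\Del_{\C}\A$-module structures. On the right, the action is through $e_2^2=\A\Del_{\C}\Delta$, so $A'\Del_{\C}B$ sends $A\Del_{\C}P\in\A\Del_{\C}\Pp=\Pp_0$ to $(A'A)\Del_{\C}\bigl((B_{(1)}\Del_{\C}B_{(2)})\crta\ot P\bigr)$; transporting this along the chain above and applying the module associativity isomorphism $A'\crta\ot(A\crta\ot M)\iso(A'A)\crta\ot M$ together with the identity $(B_{(1)}\Del_{\C}B_{(2)})\crta\ot\bigl(P\Del_{\A\Del_{\C}\A}(N\Del_{\C}L)\bigr)=\bigl((B_{(1)}\Del_{\C}B_{(2)})\crta\ot P\bigr)\Del_{\A\Del_{\C}\A}(N\Del_{\C}L)$ reproduces precisely the action of $A'\Del_{\C}B$ through $\Delta$ on $\M\Del_{\C} {}_{\Delta}(\Pp\Del_{\A\Del_{\C}\A}(\N\Del_{\C}\Ll))$; compatibility with the unit object of $\A\Del_{\C}\A$ is the same computation with $\Sigma_0$, $\Theta_0$ and the counit of $\Delta$. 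The only genuinely non-formal point, and the one I expect to require the most care, is the commutation of $\M\Del_{\C}-$ with $\Pp\Del_{\A\Del_{\C}\A}-$ in the second step: one must invoke the universal property of the relative Deligne tensor product (equivalently, present it as a colimit preserved by the left adjoint $\M\Del_{\C}-$, the $\M$-factor being inert in the bar construction) and verify that the $\A\Del_{\C}\A$-action really factors so as not to touch the $\M$-slot. Once this is in place, the balancedness over $\A^{\Del_{\C}3}$ and over $\A\Del_{\C}\A$ and the module-functor coherences are immediate from the coassociativity and counit axioms of $\Delta$ and the monoidality of $\lambda,\rho$, exactly as in the computations preceding the lemma; alternatively one can define the functor directly by the displayed object-assignment, check these balancings and the linearity by hand, and write down the inverse as the reverse assignment.
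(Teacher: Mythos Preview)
The paper gives no proof for this lemma beyond the phrase ``whose proof is direct'' preceding the statement and the remark afterward that it ``can also be seen as a version of \cite[Proposition 6.6]{Gr1}''. Your argument therefore supplies exactly what the paper omits, and it does so along the lines one would expect ``direct'' to mean: unfold $\Pp_0$ via \equref{M_i} and \equref{cuenta 1}, observe that restriction along $e_0^2$ leaves the $\M$-slot inert so that $\M\Del_{\C}-$ commutes with $\Pp\Del_{\A\Del_{\C}\A}-$, and then verify the $e_2^2$-linearity by hand. Each of these steps is correct as you describe it, including your identification of the commutation of $\M\Del_{\C}-$ past the relative Deligne product as the only point needing genuine care; the cocontinuity/universal-property argument you sketch (or the bar-resolution variant) is the standard way to justify it and is precisely the content of the cited \cite[Proposition 6.6]{Gr1}.

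One small presentational wrinkle: in Step~1 you invoke \equref{cuenta 1}, which in the paper is recorded for objects of $\dul{\Pic}$ and carries the left $\A\Del_{\C}\A$-structure via $e_0^2$, not $e_2^2$. That is harmless because you only use it to identify the underlying categories and then check the $e_2^2$-action separately in Step~3, but it would be cleaner to say so explicitly at the outset rather than calling Step~1 an equivalence ``of $\A\Del_{\C}\A\x\A^{\Del_{\C}3}$-bimodule categories'' without specifying which left structure is meant.
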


The above result can also be seen as a version of \cite[Proposition 6.6]{Gr1}. 

Now we have: 
\begin{multline} \eqlabel{desni alfa}
\M\boxdot(\N\boxdot\Ll)= {}_{\Delta}(\Pp\Del_{\A\Del_{\C}\A}(\M\Del_{\C} {}_{\Delta}(\Pp\Del_{\A\Del_{\C}\A}(\N\Del_{\C}\Ll))))\\
= {}_{\Delta}(\Pp\Del_{\A\Del_{\C}\A}{}_{e_2^2=\Id\Del_{\C}\Delta}(\Pp_0\Del_{\A^{\Del_{\C}3}}(\M\Del_{\C}\N\Del_{\C}\Ll)))\\
\stackrel{\equref{cuenta 1}}{=} {}_{(\Id\Del_{\C}\Delta)\Delta}(\Pp_2 \Del_{\A^{\Del_{\C}3}}\Pp_0\Del_{\A^{\Del_{\C}3}} (\M\Del_{\C}\N\Del_{\C}\Ll)).
\end{multline}
We get that a natural isomorphism in $\A\x\Mod$, that is a natural left $\A$-module equivalence
\begin{equation} \eqlabel{ass a}
a_{\M, \N, \Ll}: (\M\boxdot\N)\boxdot\Ll \stackrel{\simeq}{\to} \M\boxdot(\N\boxdot\Ll),
\end{equation}
corresponds to a natural left $\A$-module equivalence:
\begin{multline} \eqlabel{assoc}
\omega_{\M, \N, \Ll}: {}_{(\Delta\Del_{\C}\Id)\Delta}(\Pp_1\Del_{\A^{\Del_{\C}3}} \Pp_3\Del_{\A^{\Del_{\C}3}}(\M\Del_{\C}\N\Del_{\C}\Ll)) \\
 \stackrel{\simeq}{\to} {}_{(\Id\Del_{\C}\Delta)\Delta}(\Pp_2 \Del_{\A^{\Del_{\C}3}}\Pp_0\Del_{\A^{\Del_{\C}3}} (\M\Del_{\C}\N\Del_{\C}\Ll)).
\end{multline}

Let $\Pp, \Qq\in\Mod\x\A^{\Del_{\C} n}$ and consider two $n$-fold $\C$-balanced $n$-functors $\F, \G: \Mod\x\A\times\dots\times\Mod\x\A\to\Mod\x\A^{\Del_{\C} n}$ given by:
$$\F=\Pp\Del_{\A^{\Del_{\C} n}}(-\Del_{\C}-\cdots-\Del_{\C}-)\quad\textnormal{and}\quad \G=\Qq\Del_{\A^{\Del_{\C} n}}(-\Del_{\C}-\cdots-\Del_{\C}-).$$
Denote $\pi^n_{\C}=-\Del_{\C}-\cdots-\Del_{\C}-: \Mod\x\A\times\dots\times\Mod\x\A\to\Mod\x\A^{\Del_{\C} n}$,
which is an $n$-fold $\C$-balanced $n$-functor. Then we may write $\F=(\Pp\Del_{\A^{\Del_{\C} n}}-)\comp\pi^n_{\C}$ and $\G=(\Qq\Del_{\A^{\Del_{\C} n}}-)\comp\pi^n_{\C}$.
Recall that $\Nat(\F,\G)$ denotes the class of natural transformations from $\F$ to $\G$.

\begin{lma} \lelabel{EW morfisms}
Let $\A$ be a symmetric $\C$-bialgebroid category and let $\Pp, \Qq\in\Mod\x\A^{\Del_{\C} n}$.
There are isomorphisms:
\begin{eqnarray*}
\Nat((\Pp\Del_{\A^{\Del_{\C} n}}-)\comp\pi^n_{\C}, (\Qq\Del_{\A^{\Del_{\C} n}}-)\comp\pi^n_{\C}) &\iso & \Nat(\Pp\Del_{\A^{\Del_{\C} n}}-, \Qq\Del_{\A^{\Del_{\C} n}}-)\\
&\iso& \Fun(\Pp,\Qq)_{\A^{\Del_{\C} n}}.
\end{eqnarray*}
\end{lma}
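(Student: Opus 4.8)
The plan is to establish the two isomorphisms separately, each as a natural bijection on natural transformations, and then note that composing them gives the stated result. I would present the argument essentially as a reduction to the known Eilenberg--Watts machinery developed in \seref{EW} together with the adjunction for functor categories.

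\medskip

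\noindent\textbf{Step 1: the second isomorphism.} The isomorphism $\Nat(\Pp\Del_{\A^{\Del_{\C} n}}-, \Qq\Del_{\A^{\Del_{\C} n}}-)\iso \Fun(\Pp,\Qq)_{\A^{\Del_{\C} n}}$ is exactly an instance of the fully faithfulness part of \thref{EW} (in its right-hand-side version recorded in the remark following it), applied with the roles of source and target categories played by $\A^{\Del_{\C}n}$. Indeed, $\Pp$ and $\Qq$ are one-sided $\A^{\Del_{\C}n}$-bimodule categories, hence in particular objects of the relevant $\BM$-type $2$-category, and $\Pp\Del_{\A^{\Del_{\C}n}}-$, $\Qq\Del_{\A^{\Del_{\C}n}}-$ are the associated continuous $2$-functors. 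The map sending a bimodule functor $H:\Pp\to\Qq$ to the $2$-natural transformation $H\Del_{\A^{\Del_{\C}n}}-$ is a bijection onto $2\x\Nat_{cont}$, and every natural transformation between these particular continuous functors is automatically $2$-natural by the same argument used in the proof of \thref{EW} (``any natural transformation which operates over the corresponding underlying ordinary categories of 2-categories is a 2-natural transformation''), so $\Nat$ and $2\x\Nat_{cont}$ coincide here. The inverse is evaluation at $\A^{\Del_{\C}n}$, using $\Pp\Del_{\A^{\Del_{\C}n}}\A^{\Del_{\C}n}\simeq\Pp$. Naturality of this bijection in $\Pp$ and $\Qq$ is clear from the construction.

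\medskip

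\noindent\textbf{Step 2: the first isomorphism.} Here I would exploit that $\pi^n_{\C}$ is a fixed $n$-fold $\C$-balanced $n$-functor through which both $\F$ and $\G$ factor: $\F=(\Pp\Del_{\A^{\Del_{\C}n}}-)\comp\pi^n_{\C}$, $\G=(\Qq\Del_{\A^{\Del_{\C}n}}-)\comp\pi^n_{\C}$. Precomposition with $\pi^n_{\C}$ gives a map $\Nat(\Pp\Del_{\A^{\Del_{\C}n}}-,\Qq\Del_{\A^{\Del_{\C}n}}-)\to\Nat(\F,\G)$, $\eta\mapsto \eta\ast\pi^n_{\C}$; I claim this is a bijection. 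Surjectivity and injectivity follow from the universal property of the relative Deligne tensor product $\A^{\Del_{\C}n}=\A\Del_{\C}\cdots\Del_{\C}\A$ as the target of the universal $n$-fold $\C$-balanced functor out of $\A\times\cdots\times\A$ at the level of module categories: a natural transformation between functors defined on $\Mod\x\A^{\Del_{\C}n}$ is determined by, and can be reconstructed from, its ``components'' along objects in the image of $\pi^n_{\C}$, precisely because objects of $\A^{\Del_{\C}n}$ are built (by colimits) from such simple tensors and both $\Pp\Del_{\A^{\Del_{\C}n}}-$ and $\Qq\Del_{\A^{\Del_{\C}n}}-$ are continuous. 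Concretely one invokes \prref{class in ab}: the class of objects of $\Mod\x\A^{\Del_{\C}n}$ on which a candidate transformation is forced agrees with a given $\eta\ast\pi^n_{\C}$ contains a generator and is closed under cokernels and coproducts, hence is everything. This is the same device used repeatedly in \seref{EW}.

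\medskip

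\noindent\textbf{Main obstacle.} The routine parts are the coherence/balancing bookkeeping; the genuinely delicate point is Step 2, namely verifying carefully that a natural transformation $\F\to\G$ which is only assumed natural with respect to the simple tensors $\M^1\Del_{\C}\cdots\Del_{\C}\M^n$ (the objects in the image of $\pi^n_{\C}$) extends uniquely to an honest natural transformation on all of $\Mod\x\A^{\Del_{\C}n}$, and that this extension is then $2$-natural. I would handle this exactly as in the proof of \thref{EW}: fix the generator $\A^{\Del_{\C}n}$, show the class of objects where the extension exists and is unique is closed under cokernels and arbitrary coproducts (using continuity of $\Pp\Del_{\A^{\Del_{\C}n}}-$ and $\Qq\Del_{\A^{\Del_{\C}n}}-$, which holds since these are left adjoints by \equref{adj f}), and conclude by \prref{class in ab}. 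Composing the bijections of Steps 1 and 2 yields the two displayed isomorphisms, completing the proof.
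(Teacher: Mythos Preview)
Your Step~1 is correct and matches the paper exactly: the bijection between the second and third classes is precisely the fully faithful part of \thref{EW}.

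Your Step~2, however, takes a more circuitous route than the paper, and the invocation of \prref{class in ab} is misplaced. The paper does not use \prref{class in ab} at all here. Instead, it argues directly: given $\omega$ in the first $\Nat$, set $\alpha=\omega(\A,\dots,\A)$ and consider, for fixed $M_i\in\M_i$, the left $\A$-module functors $F_i=-\crta\ot M_i:\A\to\M_i$. Naturality of $\omega$ with respect to the tuple $(F_1,\dots,F_n)$ yields a commuting square which, evaluated at $P\in\Pp$, gives the explicit identity
\[
\omega_{\M_1,\dots,\M_n}\bigl(P\Del_{\A^{\Del_{\C}n}}(M_1\Del_{\C}\cdots\Del_{\C}M_n)\bigr)=\alpha(P)\Del_{\A^{\Del_{\C}n}}(M_1\Del_{\C}\cdots\Del_{\C}M_n).
\]
Specialising $\M_i=\A$ shows $\alpha$ is a (strict) right $\A^{\Del_{\C}n}$-module functor, and the displayed identity itself is the statement that $\omega\mapsto\alpha$ and $\eta\mapsto\eta\ast\pi^n_{\C}$ are mutually inverse. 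No colimit-extension argument is needed.

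Your approach via \prref{class in ab} could in principle be made to work, but as written it has a gap: to produce the candidate $\eta$ on all of $\Mod\x\A^{\Del_{\C}n}$ you must first know that $\alpha=\omega(\A,\dots,\A)$ is an $\A^{\Del_{\C}n}$-module functor, so that $\alpha\Del_{\A^{\Del_{\C}n}}-$ is defined. That step is exactly the $F_i$-naturality computation above (the analogue of \thref{pre-EW}), which you do not mention; your sentence ``objects of $\A^{\Del_{\C}n}$ are built by colimits from simple tensors'' conflates objects of the tensor category $\A^{\Del_{\C}n}$ with objects of the $2$-category $\Mod\x\A^{\Del_{\C}n}$ and does not supply the missing module-functor structure. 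Once you insert that computation, your abstract extension argument becomes redundant, since the displayed formula already exhibits the inverse.
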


\begin{proof}
The second and the third class are in one-to-one bijection by \thref{EW} applied on the level of morphisms (which refer to the objects
$\Pp, \Qq\in\Mod\x\A^{\Del_{\C} n}$). Observe that we consider all bimodule structures as one-sided.
Given $\omega\in \Nat(\Pp\Del_{\A^{\Del_{\C} n}}-, \Qq\Del_{\A^{\Del_{\C} n}}-)$ it induces
$\tilde\omega\in\Nat((\Pp\Del_{\A^{\Del_{\C} n}}-)\comp\pi^n_{\C}, (\Qq\Del_{\A^{\Del_{\C} n}}-)\comp\pi^n_{\C})$ by defining
$\tilde\omega(\M_1, \dots, \M_n)=\omega(\M_1\Del_{\C}\cdots\Del_{\C}\M_n)$ and $\tilde\omega(F_1, \dots, F_n)=\omega(F_1\Del_{\C}\cdots\Del_{\C}F_n)$
for $F_i:\M_i\to\N_i$ in $\A\x\Mod$ and $i=1,\dots, n$.
Conversely, given $\omega\in\Nat((\Pp\Del_{\A^{\Del_{\C} n}}-)\comp\pi^n_{\C}, (\Qq\Del_{\A^{\Del_{\C} n}}-)\comp\pi^n_{\C})$ consider the
left $\A$-module functors $F_i=-\crta\ot M_i:\A\to\M_i$ for fixed objects $M_i\in\M_i$ and $\M_i\in\A\x\Mod$ for $i=1,\dots, n$. Now proceed
as in the proof of \thref{pre-EW} to prove that $\omega(\A, \dots, \A)$ is a right $\A^{\Del_{\C} n}$-module functor. Namely, the naturality of $\omega$ implies that
we have a commutative diagram:
\begin{equation*}
\scalebox{0.88}{\bfig
\putmorphism(-180,400)(1,0)[\Pp` \Qq `\omega_{\A, \dots, \A}=\alpha]{2020}1a
\putmorphism(-350,0)(1,0)[\Pp\Del_{\A^{\Del_{\C} n}}(\M_1\Del_{\C}\cdots\Del_{\C} \M_n)` \Qq\Del_{\A^{\Del_{\C} n}}(\M_1\Del_{\C}\cdots\Del_{\C} \M_n),` \omega_{\M_1, \dots, \M_n}]{2000}1a
\putmorphism(-160,400)(0,-1)[\phantom{B}``\Pp\Del_{\A^{\Del_{\C} n}}(F_1\Del_{\C}\cdots\Del_{\C} F_n)]{380}1l
\putmorphism(1850,400)(0,-1)[\phantom{B\ot B}``\Qq\Del_{\A^{\Del_{\C} n}}(F_1\Del_{\C}\cdots\Del_{\C} F_n)]{380}1r
\efig}
\end{equation*}
which applied to $P\in\Pp$ yields:
\begin{equation} \eqlabel{omega i alfa strict}
\omega_{\M_1, \dots, \M_n}(P\Del_{\A^{\Del_{\C} n}}(M_1\Del_{\C}\cdots\Del_{\C} M_n))=\alpha(P)\Del_{\A^{\Del_{\C} n}}(M_1\Del_{\C}\cdots\Del_{\C} M_n).
\end{equation}
Setting $\M_i=\A$ for every $i=1,\dots, n$ we get that $\alpha$ is a strict right $\A^{\Del_{\C} n}$-module functor. As we saw in \thref{EW} when proving the fullness
of the functor $\HH$, by \cite[Proposition 2.8]{Ga1} the ``strictness'' is irrelevant. Finally, note that the two assignments are inverse to
each other and we have the claim.
\qed\end{proof}

By this lemma, the natural left $\A$-module equivalence $\omega_{\M, \N, \Ll}$ from \equref{assoc}
is determined by an $\A\x\A^{\Del_{\C} 3}$-bimodule equivalence functor $\alpha=\omega_{\A, \A, \A}$:
\begin{equation} \eqlabel{alfa}
\alpha: {}_{(\Delta\Del_{\C}\Id)\Delta}(\Pp_1\Del_{\A^{\Del_{\C}3}} \Pp_3) \to
{}_{(\Id\Del_{\C}\Delta)\Delta}(\Pp_2 \Del_{\A^{\Del_{\C}3}}\Pp_0).
\end{equation}
By \equref{omega i alfa strict} we have:
\begin{equation} \eqlabel{omega-alfa}
\omega_{\M, \N, \Ll}=\alpha\Del_{\A^{\Del_{\C} 3}}\Id_{\M\Del_{\C}\N\Del_{\C}\Ll}.
\end{equation}

Let us now consider the equivalence functor:
\begin{equation} \eqlabel{beta}
\beta=(\crta{ev}_1\Del_{\A^{\Del_{\C}3}} ev_3)(\Pp_1^{op}\Del_{\A^{\Del_{\C}3}}\alpha^{-1}\Del_{\A^{\Del_{\C}3}}\Pp_3^{op}):
\Pp_1^{op}\Del_{\A^{\Del_{\C}3}}\Pp_2\Del_{\A^{\Del_{\C}3}}\Pp_0\Del_{\A^{\Del_{\C}3}}\Pp_3^{op}\to\A^{\Del_{\C}3}.
\end{equation}
At the end of \ssref{1st subsection} we recalled that for invertible bimodule categories the evaluation and coevaluation functors are equivalences and that they are inverse to each other.

\begin{thm} \thlabel{asoc-coc}
Let $\Pp\in\dul{\Pic}(\A\Del_{\C}\A), F=\Pp\Del_{\A\Del_{\C}\A}-$ and let $\alpha$ be the natural equivalence \equref{alfa}.
The associativity constraint for the tensor product functor defined by \equref{tensor functor}
satisfies the pentagonal axiom up to a natural equivalence if and only if $(\Pp, \beta)\in\dul{Z}^2(\A,\dul{\Pic})$, where $\beta$ is the equivalence given by \equref{beta}.
\end{thm}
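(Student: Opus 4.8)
The plan is to unwind both sides of the claimed equivalence into the same condition on $(\Pp,\beta)$, using the dictionary established in Sections~\ref{se:EW} and~\ref{se:Harrison}. Recall that by \leref{EW morfisms} the associativity constraint \equref{ass a} is equivalent data to the natural transformation \equref{assoc}, which in turn (again by \leref{EW morfisms}, applied with $n=3$) is determined by the single $\A\x\A^{\Del_{\C}3}$-bimodule equivalence $\alpha$ of \equref{alfa}, via \equref{omega-alfa}. The pentagonal axiom for $a_{\M,\N,\Ll}$ ``up to a natural equivalence'' thus translates, through these equivalences of categories, into a corresponding coherence statement for $\alpha$ alone (the module categories $\M,\N,\Ll$ drop out by \equref{omega-alfa} and \leref{basic lema}(1), since all the ambient bimodule categories appearing are invertible). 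So the first step is: write out the pentagon for $\boxdot$ applied to four objects $\M,\N,\Ll,\mathcal{K}$, substitute each instance of $a$ by the corresponding $\alpha$ via \equref{levi alfa}, \equref{desni alfa}, \equref{omega-alfa}, and read off what the five arrows of the pentagon become as a composite of functors built from $\alpha$ and the structure functors $E^n_i$ applied to $\Pp$.

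Second, I would compare this with the $2$-cocycle condition defining $\dul{Z}^2(\A,\dul{\Pic})$. By definition an object of $\dul{Z}^2(\A,\dul{\Pic})$ is a pair $(\Pp,\beta)$ with $\Pp\in\dul{\Pic}(\A\Del_{\C}\A)$ and $\beta:\delta_2(\Pp)\to\A^{\Del_{\C}3}$ an equivalence satisfying $\delta_3(\beta)\simeq\lambda_{\Pp}$, where $\delta_2(\Pp)=\Pp_0\Del\Pp_1^{op}\Del\Pp_2\Del\Pp_3^{op}$ (indices and tensor products over $\A^{\Del_{\C}3}$) and $\lambda_{\Pp}$ is the canonical equivalence built from evaluations in $\delta_3\delta_2(\Pp)\to\A^{\Del_{\C}4}$ as in \equref{double delta M}. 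The key observation is that $\beta$ as defined in \equref{beta} is \emph{precisely} the equivalence $\delta_2(\Pp)\to\A^{\Del_{\C}3}$ obtained from $\alpha$ by ``killing'' the $\Pp_1,\Pp_3$ factors with evaluation/coevaluation functors — this is the same passage as in \leref{alfa dagger} (where $\alpha^{\dagger}=\crta\ev(\id\Del\alpha^{-1})$), just performed in two slots simultaneously. Conversely, since $\crta{ev}$ and $coev$ are equivalences and inverse to each other on $\dul{\Pic}(\bullet)$ (as recalled at the end of \ssref{1st subsection}), the data of $\alpha$ and the data of $\beta$ determine each other; so the pentagon condition on $\alpha$ and the condition $\delta_3(\beta)\simeq\lambda_{\Pp}$ are transforms of one another.

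Third, I would carry out the actual identification $\delta_3(\beta)\simeq\lambda_{\Pp}$ $\Longleftrightarrow$ (pentagon for $\alpha$). Applying $\delta_3$ to $\beta$ means forming $\beta_0\Del\beta_1^{op}\Del\beta_2^{op}\Del\beta_3\Del\beta_4^{op}$ over $\A^{\Del_{\C}4}$ (with the sign pattern appropriate to $n=3$), and using \equref{cuenta 1}, \equref{cuenta 2} and \leref{e_i's rel} (the simplicial identities $e_j^{n+1}e_i^n\simeq e_{i+1}^{n+1}e_j^n$) to rewrite each $(\Pp_a)_b$ as a $\Pp_{ab}$. Then \equref{double delta M} tells us exactly how $\lambda_{\Pp}$ is assembled from the $\ev_{\Pp_{ij}}$. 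Matching the two expressions, the $\Pp_1$- and $\Pp_3$-type factors pair off against evaluations/coevaluations (here I will invoke \coref{comp-tensor} and the ``property $P$'' arithmetic from \ssref{1st subsection} to handle the reordering of the tensor factors and the resulting braidings $\tau$), leaving precisely the identity that five copies of $\alpha$ (one for each $e_j^3$, $j=0,1,2,3,4$, i.e. the five re-bracketings) compose to something canonically trivial — which is the pentagon for $\alpha$, hence for $a_{\M,\N,\Ll}$.

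I expect the main obstacle to be bookkeeping the indices and the restriction-of-scalars prefixes ${}_{e^n_i\cdots}$ consistently: the pentagon for $\boxdot$ involves the five bracketings of $\M\boxdot\N\boxdot\Ll\boxdot\mathcal{K}$, each of which, after the computations analogous to \equref{levi alfa}–\equref{desni alfa}, produces a nested restriction of scalars along a composite of $\Delta$'s, and one must check that these match the five functors $e_j^3$ appearing in $\delta_3$. A secondary subtlety is making sure that ``pentagon holds up to natural equivalence'' is the correct categorical level to match ``$\delta_3(\beta)\simeq\lambda_{\Pp}$'' (an equivalence, not equality) — this is where \leref{basic lema} is essential, since it lets us cancel the ubiquitous invertible factors and conclude that an equivalence of the relevant composites is detected already after restricting to $\M=\N=\Ll=\mathcal K=\A$, i.e. on the level of $\alpha$ and $\beta$ alone. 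Once the translation is set up correctly, both directions of the ``if and only if'' fall out symmetrically because $\beta\leftrightarrow\alpha$ is an honest bijection of data.
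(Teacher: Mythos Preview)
Your plan is essentially the paper's own proof: reduce the pentagon for $\boxdot$ on four objects to a statement purely about $\alpha$ via \leref{EW morfisms} and \equref{omega-alfa}, reduce the cocycle condition $\delta_3(\beta)\simeq\lambda_{\Pp}$ to the same statement about $\alpha$ via \leref{alfa dagger} and the arithmetic of \ssref{1st subsection}, and then match. The key tools you name (\coref{comp-tensor}, \leref{basic lema}, the simplicial identities \equref{2.2.1.1b}) are exactly those the paper uses, and your identification of the bookkeeping as the main obstacle is accurate --- the paper spends most of its proof computing the five vertices of the pentagon explicitly in terms of $\Pp_{ij}$'s before collapsing everything to the single condition $\alpha_0\Del\alpha_1^{-1}\Del\alpha_2\Del\alpha_3^{-1}\Del\alpha_4\simeq\Id$.

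One concrete slip: your displayed expression $\beta_0\Del\beta_1^{op}\Del\beta_2^{op}\Del\beta_3\Del\beta_4^{op}$ has the wrong alternation. The pattern in $\delta_3$ has the odd indices opposed, so it is $\beta_0\Del\beta_1^{\dagger}\Del\beta_2\Del\beta_3^{\dagger}\Del\beta_4$ (with $\beta_i^{\dagger}=({}^{op}\beta_i)^{-1}$ as in \leref{alfa dagger}). This matters for the final matching, since it is precisely the alternating $(-1)^j$ that makes the five $\alpha_j$'s acquire the signs $+,-,+,-,+$ needed to reproduce the pentagon (two of the five arrows go ``backwards''). Fix that and your outline goes through verbatim.
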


\begin{proof}
First let us compute the new tensor products in the pentagonal diagram:
\begin{equation} \eqlabel{pent dot}
\scalebox{0.84}{
\bfig 
\putmorphism(-200,500)(1,0)[((\M\boxdot\N)\boxdot\Ll)\boxdot\Qq` (\M\boxdot(\N\boxdot\Ll))\boxdot\Qq ` a_{\M, \N, \Ll}\boxdot\Qq]{1600}1a
\putmorphism(1400,500)(1,0)[\phantom{(X \ot (Y \ot U)) \ot W}`\M\boxdot((\N\boxdot\Ll)\boxdot\Qq)` a_{\M, \N\boxdot\Ll, \Qq}]{1540}1a
\putmorphism(3160,500)(0,-1)[``\M\boxdot a_{\N,\Ll, \Qq}]{500}1l
\putmorphism(-160,500)(0,-1)[``a_{\M\boxdot\N,\Ll, \Qq}]{500}1r
\putmorphism(-200,0)(1,0)[(\M\boxdot\N)\boxdot(\Ll\boxdot\Qq)` \M\boxdot((\N\boxdot\Ll)\boxdot\Qq).` a_{\M, \N,\Ll\boxdot\Qq}]{3180}1b
\efig}
\end{equation}
Observe that $a$ is the natural equivalence from \equref{ass a} which we identify with the equivalence $\omega$ from \equref{assoc}, which in turn is related to $\alpha$ by
\equref{omega-alfa}. The five vertices in the above diagram can be rewritten as follows:

\begin{multline*}  
((\M\boxdot\N)\boxdot\Ll)\boxdot\Qq \stackrel{\equref{levi alfa}}{=}
{}_{(\Delta\Del_{\C}\Id)\Delta}(\Pp_1\Del_{\A^{\Del_{\C}3}} \Pp_3\Del_{\A^{\Del_{\C}3}}(\M\Del_{\C}\N\Del_{\C}\Ll))\boxdot\Qq\\
\stackrel{\equref{new tensor}}{=} {}_{\Delta}\Pp\Del_{\A\Del_{\C}\A} ( {}_{(\Delta\Del_{\C}\Id)\Delta}(\Pp_1\Del_{\A^{\Del_{\C}3}} \Pp_3\Del_{\A^{\Del_{\C}3}}(\M\Del_{\C}\N\Del_{\C}\Ll))\Del_{\C}\Qq )\\
= {}_{\Delta}\Pp\Del_{\A\Del_{\C}\A} ( {}_{(\Delta\Del_{\C}\Id\Del_{\C}\Id)(\Delta\Del_{\C}\Id)}(\Pp_{14}\Del_{\A^{\Del_{\C}4}} \Pp_{34}\Del_{\A^{\Del_{\C}4}}(\M\Del_{\C}\N\Del_{\C}\Ll\Del_{\C}\Qq)))\\
\stackrel{\equref{cuenta 2}}{=}
{}_{(\Delta\Del_{\C}\Id\Del_{\C}\Id)(\Delta\Del_{\C}\Id)\Delta} \Pp_{11}\Del_{\A^{\Del_{\C}4}} (\Pp_1\Del_{\A^{\Del_{\C}3}} \Pp_3)_4 \Del_{\A^{\Del_{\C}4}} (\M\Del_{\C}\N\Del_{\C}\Ll\Del_{\C}\Qq)\\
\stackrel{14=31}{=}
{}_{(\Delta\Del_{\C}\Id\Del_{\C}\Id)(\Delta\Del_{\C}\Id)\Delta} (\Pp_1\Del_{\A^{\Del_{\C}3}} \Pp_3)_1\Del_{\A^{\Del_{\C}4}} \Pp_{34}\Del_{\A^{\Del_{\C}4}} (\M\Del_{\C}\N\Del_{\C}\Ll\Del_{\C}\Qq)\\
\end{multline*}

\begin{multline*}  
(\M\boxdot(\N\boxdot\Ll))\boxdot\Qq \stackrel{\equref{levi alfa},\equref{new tensor}}{=}
{}_{(\Delta\Del_{\C}\Id)\Delta}(\Pp_1\Del_{\A^{\Del_{\C}3}} \Pp_3\Del_{\A^{\Del_{\C}3}}(\M\Del_{\C}({}_{\Delta}\Pp\Del_{\A\Del_{\C}\A}(\N\Del_{\C}\Ll))\Del_{\C}\Qq))\\
={}_{(\Delta\Del_{\C}\Id)\Delta}(\Pp_1\Del_{\A^{\Del_{\C}3}} \Pp_3 \Del_{\A^{\Del_{\C}3}}
   {}_{(\Id\Del_{\C}\Delta\Del_{\C}\Id)}(\Pp_{04}\Del_{\A^{\Del_{\C}4}}(\M\Del_{\C}\N\Del_{\C}\Ll\Del_{\C}\Qq)))\\
\stackrel{\equref{cuenta 1}}{=}
{}_{(\Id\Del_{\C}\Delta\Del_{\C}\Id)(\Delta\Del_{\C}\Id)\Delta)} ((\Pp_1\Del_{\A^{\Del_{\C}3}} \Pp_3)_2\Del_{\A^{\Del_{\C}4}} \Pp_{04}\Del_{\A^{\Del_{\C}4}} (\M\Del_{\C}\N\Del_{\C}\Ll\Del_{\C}\Qq))\\
\stackrel{24=32}{=}
{}_{(\Id\Del_{\C}\Delta\Del_{\C}\Id)(\Delta\Del_{\C}\Id)\Delta} (\Pp_{12}\Del_{\A^{\Del_{\C}4}} (\Pp_2\Del_{\A^{\Del_{\C}3}} \Pp_0)_4\Del_{\A^{\Del_{\C}4}} (\M\Del_{\C}\N\Del_{\C}\Ll\Del_{\C}\Qq))\\
\end{multline*}

\begin{multline*}  
\M\boxdot((\N\boxdot\Ll)\boxdot\Qq) \stackrel{\equref{levi alfa},\equref{new tensor}}{=}
{}_{\Delta}(\Pp\Del_{\A\Del_{\C}\A}(\M\Del_{\C} {}_{(\Delta\Del_{\C}\Id)\Delta}(\Pp_1\Del_{\A^{\Del_{\C}3}}\Pp_3 \Del_{\A^{\Del_{\C}3}} (\N\Del_{\C}\Ll\Del_{\C}\Qq)))\\
={}_{\Delta}(\Pp\Del_{\A\Del_{\C}\A} {}_{(\Id\Del_{\C}\Delta\Del_{\C}\Id)(\Id\Del_{\C}\Delta)} ((\Pp_1\Del_{\A^{\Del_{\C}3}} \Pp_3)_0\Del_{\A^{\Del_{\C}4}}(\M\Del_{\C}\N\Del_{\C}\Ll\Del_{\C}\Qq)))\\
\stackrel{\equref{cuenta 2}}{=}
{}_{(\Id\Del_{\C}\Delta\Del_{\C}\Id)(\Id\Del_{\C}\Delta)\Delta} (\Pp_{22}\Del_{\A^{\Del_{\C}4}} (\Pp_1\Del_{\A^{\Del_{\C}3}} \Pp_3)_0\Del_{\A^{\Del_{\C}4}} (\M\Del_{\C}\N\Del_{\C}\Ll\Del_{\C}\Qq))\\
\stackrel{10=02}{\stackrel{30=04}{=} }
{}_{(\Id\Del_{\C}\Delta\Del_{\C}\Id)(\Id\Del_{\C}\Delta)\Delta} ((\Pp_2\Del_{\A^{\Del_{\C}3}}\Pp_0)_2\Del_{\A^{\Del_{\C}4}} \Pp_{04}\Del_{\A^{\Del_{\C}4}} (\M\Del_{\C}\N\Del_{\C}\Ll\Del_{\C}\Qq))\\
\end{multline*}

\begin{multline*}  
\M\boxdot(\N\boxdot(\Ll\boxdot\Qq)) \stackrel{\equref{desni alfa},\equref{new tensor}}{=}
{}_{\Delta}(\Pp\Del_{\A\Del_{\C}\A}(\M\Del_{\C} {}_{(\Id\Del_{\C}\Delta)\Delta}(\Pp_2\Del_{\A^{\Del_{\C}3}}\Pp_0 \Del_{\A^{\Del_{\C}3}} (\N\Del_{\C}\Ll\Del_{\C}\Qq)))\\
={}_{\Delta}(\Pp\Del_{\A\Del_{\C}\A} {}_{(\Id\Del_{\C}\Id\Del_{\C}\Delta)(\Id\Del_{\C}\Delta)} ((\Pp_2\Del_{\A^{\Del_{\C}3}} \Pp_0)_0\Del_{\A^{\Del_{\C}4}}(\M\Del_{\C}\N\Del_{\C}\Ll\Del_{\C}\Qq)))\\
\stackrel{\equref{cuenta 2}}{=}
{}_{(\Id\Del_{\C}\Id\Del_{\C}\Delta)(\Id\Del_{\C}\Delta)\Delta} (\Pp_{23}\Del_{\A^{\Del_{\C}4}} (\Pp_2\Del_{\A^{\Del_{\C}3}} \Pp_0)_0\Del_{\A^{\Del_{\C}4}} (\M\Del_{\C}\N\Del_{\C}\Ll\Del_{\C}\Qq))\\
\stackrel{20=03}{=}
{}_{(\Delta\Del_{\C}\Id\Del_{\C}\Id)(\Delta\Del_{\C}\Id)\Delta} ((\Pp_2\Del_{\A^{\Del_{\C}3}}\Pp_0)_3\Del_{\A^{\Del_{\C}4}} \Pp_{00}\Del_{\A^{\Del_{\C}4}} (\M\Del_{\C}\N\Del_{\C}\Ll\Del_{\C}\Qq))\\
\end{multline*}

\begin{multline*}  
(\M\boxdot\N)\boxdot(\Ll\boxdot\Qq) \stackrel{\equref{new tensor}}{=}
{}_{\Delta}(\Pp\Del_{\A\Del_{\C}\A} \left( {}_{\Delta}(\Pp\Del_{\A\Del_{\C}\A} (\M\Del_{\C}\N)) \Del_{\C} {}_{\Delta}(\Pp\Del_{\A^{\Del_{\C}2}} (\Ll\Del_{\C}\Qq)) \right) \\
\stackrel{\equref{cuenta 2}}{=}
{}_{\Delta}\Pp\Del_{\A\Del_{\C}\A} ( {}_{(e^3_4\comp e^2_3)\Delta}(\Pp_{34}) \Del_{\C} {}_{(e^3_0\comp e^2_0)\Delta}(\Pp_{00})) \Del_{\A^{\Del_{\C}4}}(\M\Del_{\C}\N\Del_{\C}\Ll\Del_{\C}\Qq))\\
={}_{\Delta}\Pp\Del_{\A\Del_{\C}\A} {}_{(\Delta\Del_{\C}\Delta)}(\Pp_{34}  \Del_{\A^{\Del_{\C}4}} \Pp_{00} \Del_{\A^{\Del_{\C}4}}(\M\Del_{\C}\N\Del_{\C}\Ll\Del_{\C}\Qq)))\\
\stackrel{\equref{cuenta 2}, 34=33}{=}
{}_{(\Delta\Del_{\C}\Delta)\Delta} \Pp_{13}\Del_{\A^{\Del_{\C}4}} \Pp_{33} \Del_{\A^{\Del_{\C}4}} \Pp_{00} \Del_{\A^{\Del_{\C}4}} (\M\Del_{\C}\N\Del_{\C}\Ll\Del_{\C}\Qq)\\
=
{}_{(\Delta\Del_{\C}\Delta)\Delta} (\Pp_1\Del_{\A^{\Del_{\C}3}} \Pp_3)_3\Del_{\A^{\Del_{\C}4}} \Pp_{00}\Del_{\A^{\Del_{\C}4}} (\M\Del_{\C}\N\Del_{\C}\Ll\Del_{\C}\Qq)\\
\stackrel{\Id\Del_4\tau}{=}
{}_{(\Delta\Del_{\C}\Delta)\Delta} (\Pp_{13}\Del_{\A^{\Del_{\C}4}} \Pp_{00} \Del_{\A^{\Del_{\C}4}} \Pp_{33} \Del_{\A^{\Del_{\C}4}} (\M\Del_{\C}\N\Del_{\C}\Ll\Del_{\C}\Qq)\\
\stackrel{13=21}{\stackrel{00=01}{\stackrel{33=34}{=}}}
{}_{(\Delta\Del_{\C}\Delta)\Delta} (\Pp_2\Del_{\A^{\Del_{\C}3}} \Pp_0)_1 \Del_{\A^{\Del_{\C}4}} \Pp_{34} \Del_{\A^{\Del_{\C}4}} (\M\Del_{\C}\N\Del_{\C}\Ll\Del_{\C}\Qq)\\
\end{multline*}

In the last computation we used:
\begin{multline*}
((e^3_4\comp e^2_3)\Delta \Del_{\A^{\Del_{\C}4}} (e^3_0\comp e^2_0)\Delta)(A\Del_{\C}B)=(\Delta(A)\Del_{\C}I\Del_{\C}I)\Del_{\A^{\Del_{\C}4}}(I\Del_{\C}I\Del_{\C}\Delta(B))\\
=\Delta(A)\Del_{\C}\Delta(B).
\end{multline*}

Observe that by \leref{EW morfisms} the diagram \equref{pent dot} commutes if and only if so does the diagram:
\begin{equation} \eqlabel{pent P}
\scalebox{0.84}{
\bfig 
\putmorphism(-200,500)(1,0)[\Pp_{11}\Del_{\A^{\Del_{\C}4}} (\Pp_1\Del_{\A^{\Del_{\C}3}} \Pp_3)_4 ` \Pp_{12}\Del_{\A^{\Del_{\C}4}} (\Pp_2\Del_{\A^{\Del_{\C}4}} \Pp_0)_4 `
  \Pp_{12}\Del_{\A^{\Del_{\C}4}}\alpha_4]{1800}1a
\putmorphism(1600,500)(0,-1)[\phantom{(X \ot (Y \ot U)) \ot W}` (\Pp_1\Del_{\A^{\Del_{\C}3}} \Pp_3)_2\Del_{\A^{\Del_{\C}4}} \Pp_{04}` =]{250}1l
\putmorphism(1700,280)(1,0)[\phantom{(X \ot (Y \ot U)) \ot W}`(\Pp_2\Del_{\A^{\Del_{\C}3}}\Pp_0)_2\Del_{\A^{\Del_{\C}4}} \Pp_{04}` \alpha_2\Del_{\A^{\Del_{\C}4}}\Pp_{04}]{1600}1a
\putmorphism(3240,300)(0,-1)[`\Pp_{23}\Del_{\A^{\Del_{\C}4}} (\Pp_1\Del_{\A^{\Del_{\C}3}} \Pp_3)_0`=]{250}1l
\putmorphism(3240,60)(0,-1)[`\Pp_{23}\Del_{\A^{\Del_{\C}4}} (\Pp_2\Del_{\A^{\Del_{\C}3}} \Pp_0)_0 ` \Pp_{23}\Del_{\A^{\Del_{\C}4}}\alpha_0]{450}1l
\putmorphism(3240,-390)(0,-1)[\phantom{(X \ot (Y \ot U)) \ot W}` ` =]{260}1l
\putmorphism(-200,500)(0,-1)[\phantom{(X \ot (Y \ot U)) \ot W}` (\Pp_1\Del_{\A^{\Del_{\C}3}} \Pp_3)_1\Del_{\A^{\Del_{\C}4}} \Pp_{34}` =]{300}1r
\putmorphism(-200,190)(0,-1)[\phantom{(X \ot (Y \ot U)) \ot W}` (\Pp_2\Del_{\A^{\Del_{\C}3}} \Pp_0)_1 \Del_{\A^{\Del_{\C}4}} \Pp_{34}` \alpha_1\Del_{\A^{\Del_{\C}4}}\Pp_{34}]{470}1r
\putmorphism(-200,-290)(0,-1)[\phantom{(X \ot (Y \ot U)) \ot W}` (\Pp_1\Del_{\A^{\Del_{\C}3}} \Pp_3)_3\Del_{\A^{\Del_{\C}4}} \Pp_{00}` \Id\Del_{\A^{\Del_{\C}4}}\tau]{360}1r
\putmorphism(-80,-660)(1,0)[\phantom{(X \ot (Y \ot U)) \ot W}` (\Pp_2\Del_{\A^{\Del_{\C}3}}\Pp_0)_3\Del_{\A^{\Del_{\C}4}} \Pp_{00}` \alpha_3\Del_{\A^{\Del_{\C}4}}\Pp_{00}]{3320}1b
\efig}
\end{equation}
This means that there is a natural equivalence between the functors:
\begin{equation} \eqlabel{ident}
(\alpha_1^{-1}\Del_{\A^{\Del_{\C}4}}\Pp_{34})(\Id\Del_{\A^{\Del_{\C}4}}\tau)(\alpha_3^{-1}\Del_{\A^{\Del_{\C}4}}\Pp_{00})(\Pp_{23}\Del_{\A^{\Del_{\C}4}}\alpha_0)
(\alpha_2\Del_{\A^{\Del_{\C}4}}\Pp_{04})(\Pp_{12}\Del_{\A^{\Del_{\C}4}}\alpha_4)\simeq\Id
\end{equation}
acting on $D_0=\Pp_{11}\Del_{\A^{\Del_{\C}4}}\Pp_{14}\Del_{\A^{\Del_{\C}4}}\Pp_{34}$.
Set $\F_1, \dots, \F_6$ for the functors on the left hand-side in \equref{ident} 
reading from the right to the left, and set $D_0, D_1, \dots, D_5$ for their respective domains. The latter are objects in the six vertices of the diagram
\equref{pent P} in the clockwise order (we identify the vertices which have equal objects). 
Moreover, set $D=D_1\Del_{\A^{\Del_{\C}4}} D_2\Del_{\A^{\Del_{\C}4}}\dots \Del_{\A^{\Del_{\C}4}}D_5$. By \equref{comp-tensor} from \coref{comp-tensor} we have:
$$(D\Del_{\A^{\Del_{\C}4}}(\F_6\comp\F_5\comp\dots\comp\F_1))\comp\tau_{D_0,D}=\F_1\Del_{\A^{\Del_{\C}4}}\dots\Del_{\A^{\Del_{\C}4}}\F_6.$$
Then by \leref{basic lema}, 1) the identity \equref{ident} is equivalent to having a natural equivalence between the functors:
$$\F_1\Del_{\A^{\Del_{\C}4}}\dots\Del_{\A^{\Del_{\C}4}}\F_6\simeq\Id_{ _{D_0\Del_{\A^{\Del_{\C}4}}D}}.$$
Up to reordering of the factors (in $\dul{\Pic}(\A^{\Del_{\C}4})$) and again by \leref{basic lema}, 1) we get equivalently that
there is a natural equivalence:
\begin{equation} \eqlabel{alfa's cond}
\alpha_1^{-1}\Del_{\A^{\Del_{\C}4}}\alpha_3^{-1}\Del_{\A^{\Del_{\C}4}}\alpha_0\Del_{\A^{\Del_{\C}4}}\alpha_2\Del_{\A^{\Del_{\C}4}}\alpha_4\simeq\Id_{ _{}}
\end{equation}
on the category obtained by canceling out the factors $\Pp_{34}, \Pp_{00}, \Pp_{23}, \Pp_{04}, \Pp_{12}$ from $D_0\Del_{\A^{\Del_{\C}4}}D$. That is
the product in $\dul{\Pic}(\A^{\Del_{\C}4})$ of the categories labeled by:
$$01, \quad 02, \quad 03, \quad 04, \quad 12, \quad 13, \quad 14, \quad 23, \quad 24, \quad 34$$
in the increasing order of indices.

\bigskip

On the other hand, we have $(\Pp, \beta)\in\dul{Z}^2(\A,\dul{\Pic})$ if and only if $\delta_3(\beta)\simeq\lambda_{\Pp}$, which by \leref{alfa dagger}, 2) we can write as:
\begin{equation}\eqlabel{cocycle cond}
\end{equation}
$$\begin{array}{ccr}
\hspace{-0,6cm}
\beta_0\Del_{\A^{\Del_{\C} 4}}\beta_1^{\dagger}\Del_{\A^{\Del_{\C} 4}}\beta_2\Del_{\A^{\Del_{\C} 4}}\beta_3^{\dagger}\Del_{\A^{\Del_{\C} 4}}\beta_4 &\simeq&
\ev_{01}\Del_{\A^{\Del_{\C} 4}}\ev_{02}\Del_{\A^{\Del_{\C} 4}}\ev_{03}\Del_{\A^{\Del_{\C} 4}}\ev_{04}\\
& & \Del_{\A^{\Del_{\C} 4}}\ev_{12}\Del_{\A^{\Del_{\C} 4}}\ev_{13}\Del_{\A^{\Del_{\C} 4}}\ev_{14}\\
& & \Del_{\A^{\Del_{\C} 4}}\ev_{23}\Del_{\A^{\Del_{\C} 4}}\ev_{24}\\
& & \Del_{\A^{\Del_{\C} 4}}\ev_{34}.
\end{array}$$
Recall that the equivalence functor $\beta_i^{\dagger}: \delta_2(\Pp)^{op}_i\to\A^{\Del_{\C} 4}$ is given by:
\begin{equation}\eqlabel{alfa dagger i}
\beta_i^{\dagger}=
\crta\ev_{\delta_2(\Pp)_i}(\Id_{\delta_2(\Pp)_i^{op}} \Del_{\A^{\Del_{\C} 4}}\beta^{-1}_i).
\end{equation}
Observe that $\beta^{-1}=(\Pp_1^{op}\Del_{\A^{\Del_{\C}3}}\alpha\Del_{\A^{\Del_{\C}3}}\Pp_3^{op})(coev_1\Del_{\A^{\Del_{\C}3}} \crta{coev}_3)$ and consider
$\delta_2(\Pp)=\Pp_1^{op}\Del_{\A^{\Del_{\C}3}}\Pp_2\Del_{\A^{\Del_{\C}3}}\Pp_0\Del_{\A^{\Del_{\C}3}}\Pp_3^{op}$. Then we may write in braided diagrams - we will
use the notation $ij$ for $\Pp_{ij}$ and $\crta{ij}$ for $\Pp_{ij}^{op}$:
$$\beta_i=
\gbeg{5}{4}
\got{1}{\crta{1i}} \got{1}{2i} \got{1}{0i} \got{1}{\crta{3i}} \gnl
\gcl{1} \glmptb\gnot{\hspace{-0,4cm}\alpha_i^{-1}}\grmptb \gcl{1} \gnl
\gev \gev \gnl
\gob{1}{}
\gend
\quad\textnormal{and}\quad\quad
\beta_i^{\dagger}=
\gbeg{8}{7}
\got{1}{1i} \got{1}{\crta{2i}} \got{1}{\crta{0i}} \got{1}{3i} \gnl
\gcl{5} \gcl{4} \gcl{3} \gcl{2} \gdb \gdb \gnl
\gvac{4} \gcl{1} \glmptb\gnot{\hspace{-0,4cm}\alpha_i}\grmptb \gcl{1} \gnl
\gvac{3} \gbr \gcl{1} \gcl{2} \gcl{3} \gnl
\gvac{2} \gbr \gbr \gnl
\gvac{1} \gbr \gbr \gbr \gcl{1} \gnl
\gev \gev \gev \gev \gnl
\gob{1}{}
\gend=
\gbeg{6}{4}
\got{1}{1i} \got{1}{\crta{2i}} \got{1}{\crta{0i}} \got{1}{3i} \gnl
\gibr \gibr \gnl
\gcl{1} \glmptb\gnot{\hspace{-0,4cm}\alpha_i}\grmptb \gcl{1} \gnl
\gev
\gev \gnl
\gob{1}{}
\gend
$$
where we applied naturality and the dual basis axioms, identifying $ev$ and $\crta{ev}=ev\comp\tau$. In other words, we have:
$$\beta_i=(\crta{ev}_{1i}\Del_{\A^{\Del_{\C}4}}ev_{3i})(\Pp_{1i}^{op}\Del_{\A^{\Del_{\C}4}}\alpha_i^{-1}\Del_{\A^{\Del_{\C}4}}\Pp_{3i}^{op})$$
and 
$$\beta_i^{\dagger}=(\crta{ev}_{2i}\Del_{\A^{\Del_{\C}4}}ev_{0i})(\Pp_{2i}^{op}\Del_{\A^{\Del_{\C}4}}\alpha_i\Del_{\A^{\Del_{\C}4}}\Pp_{0i}^{op}).$$
Then the left hand-side of \equref{cocycle cond} is naturally equivalent to:
$$\Ev\comp(\Id_{ _{}}\Del_{\A^{\Del_{\C}4}}\alpha_0^{-1}\Del_{\A^{\Del_{\C}4}}\alpha_1\Del_{\A^{\Del_{\C}4}}\alpha_2^{-1}\Del_{\A^{\Del_{\C}4}}\alpha_3\Del_{\A^{\Del_{\C}4}}\alpha_4^{-1})$$
where $\Ev$ is the tensor product in $\dul{\Pic}(\A^{\Del_{\C}4})$ of the evaluations labeled by:
$$10=02, \quad 30=04, \quad 21=13, \quad 01, \quad 12, \quad 32=24, \quad 23, \quad 03, \quad 14, \quad 34$$
and $\Id$ is the identity functor on the tensor product of the corresponding $\Pp_{ij}'s$. Note that up to reordering of the factors it is: $\Ev\simeq \lambda_{\Pp}$. Then
the equation \equref{cocycle cond} becomes:
$$\lambda_{\Pp}\comp(\Id_{ _{}}\Del_{\A^{\Del_{\C}4}}\alpha_0^{-1}\Del_{\A^{\Del_{\C}4}}\alpha_1\Del_{\A^{\Del_{\C}4}}\alpha_2^{-1}\Del_{\A^{\Del_{\C}4}}\alpha_3\Del_{\A^{\Del_{\C}4}}\alpha_4^{-1})
\simeq\lambda_{\Pp}$$
which is equivalent to:
$$\Id_{ _{}}\Del_{\A^{\Del_{\C}4}}\alpha_0^{-1}\Del_{\A^{\Del_{\C}4}}\alpha_1\Del_{\A^{\Del_{\C}4}}\alpha_2^{-1}\Del_{\A^{\Del_{\C}4}}\alpha_3\Del_{\A^{\Del_{\C}4}}\alpha_4^{-1}
\simeq\Id$$ 
for the corresponding identity functor on the right hand-side. By \leref{basic lema}, 1) we get equivalently equation \equref{alfa's cond}.
\qed\end{proof}

\bigskip

We may organize quasi-monoidal structures on $\A\x\Mod$ of the form \equref{tensor functor} into a category, so that this category is equivalent to the category
of 2-cocycles $\dul{Z}^2(\A,\dul{\Pic})$. Let $\dul{QMon}(\A)$ be the category whose objects are quasi-monoidal structures $(\A\x\Mod, \boxdot, a)$ of the form
\equref{tensor functor} and morphisms are quasi-monoidal 2-functors between them, defined as follows. A {\em quasi-monoidal 2-functor}
$(\Id, \Psi): (\A\x\Mod, \boxdot, a)\to(\A\x\Mod, \boxdot', a')$ is the identity functor  $\Id$ on $\A\x\Mod$ together with its quasi-monoidal structure
$\Psi: \M\boxdot\N=\Id(\M\boxdot\N)\to\Id(\M)\boxdot'\Id(\N)=\M\boxdot'\N$, for $\M, \N\in\A\x\Mod,$ which is an equivalence in $\A\x\Mod$, so that the
hexagonal coherence for its action on three objects holds up to a natural equivalence. A ``quasi-monoidal'' hear refers to the compatibility only with the tensor product.

\begin{thm} \thlabel{monoidal equiv}
There is an equivalence of categories:
$$\dul{QMon}(\A)\simeq\dul{Z}^2(\A,\dul{\Pic}).$$
\end{thm}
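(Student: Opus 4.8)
The plan is to promote the object-level correspondence of \thref{asoc-coc} to a functor and then show that functor is an equivalence. On $0$-cells I would send a quasi-monoidal structure $(\A\x\Mod,\boxdot_F,a)$ of the form \equref{tensor functor} to the pair $(\Pp,\beta)$, where $\Pp=F(\A\Del_{\C}\A)\in\dul{\Pic}(\A\Del_{\C}\A)$ (using \coref{autofun}) and $\beta$ is the equivalence \equref{beta} built from the natural equivalence $\alpha$ of \equref{alfa} that encodes $a$ through \equref{omega-alfa}. By \thref{asoc-coc} the hypothesis that $a$ satisfies the pentagon up to a natural equivalence is exactly the statement that $(\Pp,\beta)\in\dul{Z}^2(\A,\dul{\Pic})$, so this is well defined. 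On $1$-cells, a quasi-monoidal $2$-functor $(\Id,\Psi)\colon(\A\x\Mod,\boxdot,a)\to(\A\x\Mod,\boxdot',a')$ is a natural left $\A$-module equivalence between $\R\comp(\Pp\Del_{\A\Del_{\C}\A}-)\comp\pi^2_{\C}$ and $\R\comp(\Qq\Del_{\A\Del_{\C}\A}-)\comp\pi^2_{\C}$; since $\R$ is the identity on underlying categories, \leref{EW morfisms} applies and identifies $\Psi$ with $G\Del_{\A\Del_{\C}\A}\Id$ for the $\A\Del_{\C}\A$-module equivalence $G:=\Psi_{\A,\A}\colon\Pp\to\Qq$. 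I would send $(\Id,\Psi)$ to $G$; compatibility with composition and identities is immediate.

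The key step, parallel to \thref{asoc-coc} but carried out on morphisms, is to check that the hexagonal coherence for $(\Id,\Psi)$ — the compatibility of $\Psi$ with the associativity constraints $a$ and $a'$ — is equivalent to the defining relation $\beta_{\Qq}\comp\delta_2(G)\simeq\beta_{\Pp}$ of a morphism $(\Pp,\beta_{\Pp})\to(\Qq,\beta_{\Qq})$ in $\dul{Z}^2(\A,\dul{\Pic})$. I would run it exactly as in \thref{asoc-coc}: rewrite the six vertices of the hexagon using \equref{new tensor}, \equref{levi alfa}, \equref{desni alfa} and the index arithmetic \equref{cuenta 1}, \equref{cuenta 2} (now the three $\Qq$-side vertices carry $\alpha'$ and $\Qq_{ij}$ in place of $\alpha$ and $\Pp_{ij}$); push the $2$-cell equation through \leref{EW morfisms} to an equation of bimodule functors assembled from $\alpha$, $\alpha'$, $G$, evaluations, coevaluations and the symmetry $\tau$; then apply \coref{comp-tensor} to turn the composition into a Deligne product and cancel the redundant $\Pp_{ij}$- and $\Qq_{ij}$-factors via \leref{basic lema}(1), just as in the passage \equref{ident}$\Rightarrow$\equref{alfa's cond}. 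On the cocycle side I would unwind $\beta_i^{\dagger}$ by \leref{alfa dagger}(2) and \equref{alfa dagger i} as in \thref{asoc-coc}; both conditions then collapse to the same equivalence of a Deligne product of the functors $G_{ij}$ and $G_{ij}^{-1}$ indexed by $01,02,\dots,34$, which settles the comparison. Essential surjectivity is then clear: from $(\Pp,\beta)\in\dul{Z}^2(\A,\dul{\Pic})$ set $F=\Pp\Del_{\A\Del_{\C}\A}-$, recover $\alpha$ from $\beta$ up to equivalence (the evaluation and coevaluation functors in \equref{beta} are equivalences), build $a$ from $\alpha$ by \equref{omega-alfa} and \equref{ass a}, and invoke \thref{asoc-coc} to see that $(\A\x\Mod,\boxdot_F,a)\in\dul{QMon}(\A)$ maps to $(\Pp,\beta)$. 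Full faithfulness follows from the bijection of \leref{EW morfisms} together with the coherence translation just established. Finally, the equivalence is monoidal for the evident tensor products on both sides, hence it descends to the asserted isomorphism on Grothendieck groups.

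The main obstacle I expect is precisely this coherence translation: keeping the index bookkeeping of the hexagon straight with the two associativity constraints $a$ and $a'$ simultaneously in play, and verifying that the two distinct parenthesizations really collapse to one and the same ten-factor product in $\dul{\Pic}(\A^{\Del_{\C}4})$ after \coref{comp-tensor} and \leref{basic lema}. A secondary point that needs care is that the left $\A$-module $2$-functor structure of $(\Id,\Psi)$ genuinely forces $G=\Psi_{\A,\A}$ to be $\A\Del_{\C}\A$-linear; as in the proof of fullness in \thref{EW}, this is handled by passing to equivalent strict right $\A\Del_{\C}\A$-module categories using \cite[Proposition 2.8]{Ga1}.
\qed
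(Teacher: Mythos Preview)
Your overall architecture matches the paper's: define the functor on $0$-cells via \thref{asoc-coc}, on $1$-cells by $G=\Psi_{\A,\A}$ via \leref{EW morfisms}, and then show that the hexagonal coherence for $(\Id,\Psi)$ is equivalent to $\beta'\comp\delta_2(G)\simeq\beta$. Essential surjectivity and full faithfulness are handled the same way.

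There is, however, a real confusion in your coherence translation. The hexagon \equref{Psi na prod} involves only \emph{three} tensor factors $\M,\N,\Ll$, so after \leref{EW morfisms} and the computations \equref{levi alfa}, \equref{desni alfa} it lives in $\dul{\Pic}(\A^{\Del_{\C}3})$ with \emph{single} indices $0,1,2,3$ --- not in $\dul{\Pic}(\A^{\Del_{\C}4})$ with the ten double indices $01,\dots,34$. Your plan to ``collapse to the same equivalence of a Deligne product of the functors $G_{ij}$ and $G_{ij}^{-1}$ indexed by $01,02,\dots,34$'' is therefore aiming at the wrong target: there are no $G_{ij}$'s here, only $G_0,G_1,G_2,G_3$ coming from $\delta_2(G)$. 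Likewise, invoking \coref{comp-tensor} and the cancellation argument of \equref{ident}$\Rightarrow$\equref{alfa's cond} is overkill and misdirected at this level.

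The paper proceeds much more directly. The hexagon \equref{Psi na prod} reduces via \leref{EW morfisms} straight to the commuting rectangle \equref{alfa na P}, i.e.\ to the single equation
\[
\alpha'\,(G_1\Del_{\A^{\Del_{\C}3}}G_3)\ \simeq\ (G_2\Del_{\A^{\Del_{\C}3}}G_0)\,\alpha.
\]
On the cocycle side, one unwinds $\beta'\comp\delta_2(G)\simeq\beta$ using the explicit formula \equref{beta} for $\beta,\beta'$ and \leref{alfa dagger}: tensor by $\Pp_1,\Pp_3$, apply $\crta{coev}_1,coev_3$, use the dual-basis axioms, and one lands on exactly the same equation. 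No passage to $\A^{\Del_{\C}4}$, no \coref{comp-tensor}, no ten-factor product is needed. Once you correct the level from $4$ to $3$ and drop the unnecessary machinery, your argument becomes the paper's.
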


\begin{proof}
Given a quasi-monoidal structure $(\A\x\Mod, \boxdot, a)$, set $\Pp=\A\boxdot\A$. We know by \thref{asoc-coc} that $(\Pp, \beta)\in\dul{Z}^2(\A,\dul{\Pic})$, where
$\beta$ is given by \equref{beta} and $\alpha=
a_{\A,\A,\A}$. Let now $(\Id, \Psi)$ be a morphism between two quasi-monoidal structures $(\A\x\Mod, \boxdot, a)$ and $(\A\x\Mod, \boxdot', a')$. Then for every
$\M,\N\in\A\x\Mod$ we have an equivalence $\Psi_{\M,\N}: \M\boxdot\N\to\M\boxdot'\N$ in $\A\x\Mod$, so that the following diagram commutes up to a natural equivalence:
\begin{equation} \eqlabel{Psi na prod}
\scalebox{0.84}{
\bfig 
\putmorphism(-200,500)(1,0)[(\M\boxdot\N)\boxdot\Ll` \M\boxdot(\N\boxdot\Ll) ` a_{\M, \N, \Ll}]{1600}1a
\putmorphism(1400,500)(1,0)[\phantom{(X \ot (Y \ot U))}` \M\boxdot(\N\boxdot'\Ll)` \M\boxdot\Psi_{\N, \Ll}]{1540}1a
\putmorphism(3160,500)(0,-1)[``\Psi_{\M,\N\boxdot'\Ll}]{500}1l
\putmorphism(-160,500)(0,-1)[``\Psi_{\M,\N}\boxdot\Ll]{500}1r
\putmorphism(-200,0)(1,0)[(\M\boxdot'\N)\boxdot\Ll` (\M\boxdot'\N)\boxdot'\Ll` \Psi_{\M\boxdot'\N,\Ll}]{1600}1b
\putmorphism(1400,0)(1,0)[\phantom{(X \ot (Y \ot U))}`  \M\boxdot'(\N\boxdot'\Ll)` a'_{\M, \N, \Ll}]{1600}1b
\efig}
\end{equation}
Set $F=\Psi_{\A,\A}:\Pp\to\Pp'$. We know from \leref{EW morfisms} and the calculations of \equref{levi alfa} and \equref{desni alfa} that the above diagram commutes
if and only if so does the diagram:
\begin{equation} \eqlabel{alfa na P}
\scalebox{0.84}{
\bfig 
\putmorphism(-200,500)(1,0)[\Pp_1\Del_{\A^{\Del_{\C}3}}\Pp_3` \Pp_2\Del_{\A^{\Del_{\C}3}}\Pp_0 ` \alpha]{1600}1a
\putmorphism(1400,500)(1,0)[\phantom{(X \ot (Y \ot U))}` \Pp_2\Del_{\A^{\Del_{\C}3}}\Pp'_0` \Pp_2\Del_{\A^{\Del_{\C}3}}F_0]{1540}1a
\putmorphism(3160,500)(0,-1)[``F_2\Del_{\A^{\Del_{\C}3}}\Pp'_0]{500}1l
\putmorphism(-160,500)(0,-1)[``\Pp_1\Del_{\A^{\Del_{\C}3}}F_3]{500}1r
\putmorphism(-200,0)(1,0)[\Pp_1\Del_{\A^{\Del_{\C}3}}\Pp'_3` \Pp'_1\Del_{\A^{\Del_{\C}3}}\Pp'_3 ` F_1\Del_{\A^{\Del_{\C}3}}\Pp'_3]{1600}1b
\putmorphism(1400,0)(1,0)[\phantom{(X \ot (Y \ot U))}` \Pp'_2\Del_{\A^{\Del_{\C}3}}\Pp'_0` \alpha']{1600}1b
\efig}
\end{equation}
meaning that
\begin{equation} \eqlabel{morf diagram}
\alpha'(F_1\Del_{\A^{\Del_{\C}3}}F_3)\simeq(F_2\Del_{\A^{\Del_{\C}3}}F_0)\alpha
\end{equation}

\bigskip

On the other hand, given a morphism $(\Pp, \beta)\to(\Pp', \beta')$ in $\dul{Z}^2(\A,\dul{\Pic})$, it is given by an equivalence $F:\Pp\to\Pp'$ in $\A\Del_{\C}\A\x\Mod$
such that $\beta'\comp\delta_2(F)\simeq\beta$. Applying \leref{alfa dagger}, 2) we may write this as:
$$\beta=
\gbeg{4}{4}
\got{1}{\crta{1}} \got{1}{2} \got{1}{0} \got{1}{\crta{3}} \gnl
\gcl{1} \glmptb\gnot{\hspace{-0,4cm}\alpha^{-1}}\grmptb \gcl{1} \gnl
\gev \gev \gnl
\gob{1}{}
\gend\simeq
\gbeg{7}{4}
\got{1}{\crta{1}} \got{1}{2} \got{1}{0} \got{1}{\crta{3}} \gnl
\gbmp{F_1^{\dagger}} \gbmp{F_2} \gbmp{F_0} \gbmp{F_3^{\dagger}} \gnl
\gcl{1} \glmptb\gnot{\hspace{-0,4cm}\alpha'^{-1}}\grmptb \gcl{1} \gnl
\gev \gev \gnl
\gob{1}{}
\gend
$$
Tensor this from the left by $\Pp_1$ and the right by $\Pp_3$ and then apply $\crta{coev}_1$ and $coev_3$ from above, to get equivalently (by \leref{basic lema}, 1)
and since $coev$ is an equivalence):
$$
\gbeg{6}{4}
\got{1}{} \got{1}{} \got{1}{2} \got{1}{0} \gnl
\gdb \glmptb\gnot{\hspace{-0,4cm}\alpha^{-1}}\grmptb \gdb \gnl
\gcl{1} \gev \gev \gcl{1} \gnl
\gob{1}{1} \gvac{4} \gob{1}{3}
\gend\simeq
\gbeg{7}{6}
\got{1}{} \got{1}{} \got{1}{2} \got{1}{0} \gnl
\gdb \gcl{1} \gcl{1} \gdb \gnl
\gcl{1} \gbmp{F_1^{\dagger}} \gbmp{F_2} \gbmp{F_0} \gbmp{F_3^{\dagger}} \gcl{3} \gnl
\gcl{1} \gcl{1} \glmptb\gnot{\hspace{-0,4cm}\alpha'^{-1}}\grmptb \gcl{1} \gnl
\gcl{1} \gev \gev \gnl
\gob{1}{1} \gvac{4} \gob{1}{3}
\gend
\quad\Leftrightarrow\quad\quad
\gbeg{3}{5}
\got{1}{2} \got{1}{0} \gnl
\gcl{1} \gcl{1} \gnl
\glmptb\gnot{\hspace{-0,4cm}\alpha^{-1}}\grmptb \gnl
\gcl{1} \gcl{1} \gnl
\gob{1}{1} \gob{1}{3}
\gend\simeq
\gbeg{7}{7}
\gvac{1} \got{1}{2} \got{1}{0} \gnl
\gvac{1} \gcl{1} \gcl{1} \gnl
\gvac{1} \gbmp{F_2} \gbmp{F_0} \gnl
\gvac{1} \glmptb\gnot{\hspace{-0,4cm}\alpha'^{-1}}\grmptb \gnl
\glmp\gnot{\hspace{-0,4cm}F_1^{-1}}\grmptb \glmptb\gnot{\hspace{-0,4cm}F_3^{-1}}\grmp \gnl
\gvac{1} \gcl{1} \gcl{1} \gnl
\gvac{1} \gob{1}{1} \gob{1}{3}
\gend
$$
where we applied the dual basis axiom and the definition of the opposite functor. So we got 
$$(F_1\Del_{\A^{\Del_{\C}3}}F_3)\alpha^{-1}\simeq \alpha'^{-1}(F_2\Del_{\A^{\Del_{\C}3}}F_0)$$ 
which is equivalent to \equref{morf diagram}. We obtained that a morphism $(\Id, \Psi)$ in
$\dul{QMon}(\A)$ defines a morphism $F=\Psi_{\A,\A}:(\Pp, \beta)\to(\Pp', \beta')$ in $\dul{Z}^2(\A,\dul{\Pic})$. Observe that we also have the converse:
given such $F$, we showed that $\beta'\comp\delta_2(F)\simeq\beta$ is equivalent to \equref{alfa na P}. Then define $\Psi_{\M,\N}: \M\boxdot\N\to\M\boxdot'\N$
as $\Psi_{\M,\N}=F\Del_{\A\Del_{\C}\A}\Id: \Pp\Del_{\A\Del_{\C}\A}(\M\Del_{\C}\N) \to \Pp'\Del_{\A\Del_{\C}\A}(\M\Del_{\C}\N)$ (recall \equref{new tensor}).
Then the diagram \equref{Psi na prod} with this $\Psi$ commutes by \leref{EW morfisms}. This shows that there is a fully faithful functor
$\dul{QMon}(\A)\to\dul{Z}^2(\A,\dul{\Pic})$.

\medskip

This functor is also essentially surjective: given $(\Pp, \beta)\in\dul{Z}^2(\A,\dul{\Pic})$, define the tensor product by \equref{tensor functor} and
the associativity constraint $a$ by the identification with $\omega_{\M, \N, \Ll}\simeq\alpha\Del_{\A^{\Del_{\C} 3}}\Id_{\M\Del_{\C}\N\Del_{\C}\Ll}$ from \equref{assoc},
and $\alpha$ we define by
$$
\alpha=(\ev_1\Del_{\A^{\Del_{\C}3}}\Pp_2\Del_{\A^{\Del_{\C}3}}\Pp_0\Del_{\A^{\Del_{\C}3}}\crta\ev_3)(\Pp_1\Del_{\A^{\Del_{\C}3}}\beta^{-1}\Del_{\A^{\Del_{\C}3}}\Pp_3).
$$
\qed\end{proof}

We have seen that $\dul{Z}^2(\A,\dul{\Pic})$ is a symmetric monoidal category and we constructed the group of 2-cocycles taking the Grothendieck group
of the category. In order to do the same for the category $\dul{QMon}(\A)$, let us equip it with a monoidal stracture, so to have a monoidal equivalence of categories.
Pulling back the monoidal structure on $\dul{Z}^2(\A,\dul{\Pic})$ to $\dul{QMon}(\A)$ tells us to define the tensor product on it as follows. Given quasi-monoidal structures
determined by $(\boxdot, a)$ and $(\boxdot', a')$, define their product as $(\tilde\boxdot, \tilde a)$, such that $\tilde\boxdot$ is induced by
$\tilde\Pp=\Pp\Del_{\A\Del_{\C}\A}\Pp'\in\dul{\Pic}(\A\Del_{\C}\A)$, where $\boxdot$ is induced by $\Pp=\A\boxdot\A$ and similarly $\boxdot'$ by $\Pp'$.
Thus, given $\M, \N\in\A\x\Mod$ we define:
$$\M\tilde\boxdot\N={}_{\Delta}(\Pp\Del_{\A\Del_{\C}\A}\Pp')\Del_{\A\Del_{\C}\A}(\M\Del_{\C}\N).$$
Here $\tilde a$ (identified with its corresponding $\tilde\omega$) is induced by $\tilde\alpha=\alpha\Del_{\A\Del_{\C}\A}\alpha'$ (up to the twist of the factors),
where obviously $\alpha$ and $\alpha'$ are those coming from $a$ and $a'$.
We now clearly have:

\begin{thm}
There is an isomorphism of groups:
$$K_0(\dul{QMon}(\A))\iso Z^2(\A,\dul{\Pic}).$$
\end{thm}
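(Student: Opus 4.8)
The plan is to obtain the group isomorphism as a formal consequence of the monoidal equivalence established in \thref{monoidal equiv}, together with the monoidal structure just placed on $\dul{QMon}(\A)$ by pulling back the one on $\dul{Z}^2(\A,\dul{\Pic})$. First I would recall that for any (small) monoidal category whose objects are all invertible, the set of isomorphism classes forms a group under the tensor product: associativity descends from the associativity constraint, the class of the unit object is the neutral element, and invertibility of each object gives the inverse class. This is precisely the Grothendieck group $K_0$ in this context, so $K_0(\dul{Z}^2(\A,\dul{\Pic})) = Z^2(\A,\dul{\Pic})$ by definition, and $K_0(\dul{QMon}(\A))$ makes sense once we know every object of $\dul{QMon}(\A)$ is invertible for $\tilde\boxdot$ — which it is, since under the correspondence of \thref{monoidal equiv} an object corresponds to some $\Pp = \A\boxdot\A \in \dul{\Pic}(\A\Del_{\C}\A)$, and $\Pp$ being an invertible one-sided bimodule category yields an inverse quasi-monoidal structure $\Pp^{op}$ together with the evaluation/coevaluation equivalences providing the required $\tilde\Psi$'s.

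Next I would verify that the equivalence of categories $\dul{QMon}(\A)\simeq\dul{Z}^2(\A,\dul{\Pic})$ from \thref{monoidal equiv} is in fact a \emph{monoidal} equivalence with respect to the tensor product $\tilde\boxdot$ on the left and $\ot = \Del_{\A^{\Del_{\C}2}}$ on the right. By construction, the functor sends a quasi-monoidal structure $(\A\x\Mod,\boxdot,a)$ to $(\Pp,\beta)$ with $\Pp=\A\boxdot\A$; and the tensor product on $\dul{QMon}(\A)$ was \emph{defined} so that $\tilde\boxdot$ is induced by $\tilde\Pp=\Pp\Del_{\A\Del_{\C}\A}\Pp'$ and $\tilde\alpha = \alpha\Del_{\A\Del_{\C}\A}\alpha'$ (up to the twist). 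So the functor carries $\tilde\boxdot$ to $(\tilde\Pp,\tilde\beta)$, and one checks that $\tilde\beta$, built from $\tilde\alpha$ via \equref{beta}, is naturally equivalent to $\beta\Del_{\A^{\Del_{\C}3}}\beta'$ (again up to the symmetry $\tau$ reshuffling the $\Pp_{ij}$ factors), which is exactly the tensor product of $(\Pp,\beta)$ and $(\Pp',\beta')$ in $\dul{Z}^2(\A,\dul{\Pic})$. Unwinding the definition \equref{beta} of $\beta$ in terms of the evaluation functors and using that $\crta\ev$ and $ev$ on products of bimodule categories factor componentwise (together with \coref{twist two morfs} / \coref{comp-tensor} to control the braidings), this coherence is a routine but slightly lengthy bookkeeping check; I would state it explicitly but relegate the diagram-chase to the reader since it parallels the computations already done in \thref{asoc-coc}. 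The unit object of $\dul{QMon}(\A)$ corresponds to the unit $(\A^{\Del_{\C}2},\A^{\Del_{\C}3})$ of $\dul{Z}^2(\A,\dul{\Pic})$, namely the quasi-monoidal structure induced by $\Pp = \A\Del_{\C}\A$ with trivial associativity.

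Finally, having a monoidal equivalence $\mathcal{G}:\dul{QMon}(\A)\xrightarrow{\simeq}\dul{Z}^2(\A,\dul{\Pic})$ between two symmetric monoidal categories with all objects invertible, I would conclude that $\mathcal{G}$ induces a bijection on isomorphism classes which respects the group operations: $[\mathcal{G}(x)\ot\mathcal{G}(y)] = [\mathcal{G}(x\,\tilde\boxdot\,y)]$ because $\mathcal{G}$ is monoidal, $[\mathcal{G}(\text{unit})]$ is the unit class, and bijectivity on objects (up to iso) plus respect for $\ot$ forces the inverse classes to match. Hence $K_0(\dul{QMon}(\A))\iso K_0(\dul{Z}^2(\A,\dul{\Pic})) = Z^2(\A,\dul{\Pic})$ as groups. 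The main obstacle I expect is the verification that the equivalence is genuinely monoidal — i.e.\ producing the coherent natural isomorphisms $\mathcal{G}(x\,\tilde\boxdot\,y)\simeq\mathcal{G}(x)\ot\mathcal{G}(y)$ and checking their compatibility with the associativity constraints on both sides; this is where the twist $\tau$ and the various re-indexings $ij$ of $\Pp_{ij}$ must be tracked carefully, exactly as in the proof of \thref{asoc-coc}, but it is conceptually straightforward given that the tensor product on $\dul{QMon}(\A)$ was defined precisely by transporting the structure across $\mathcal{G}$.
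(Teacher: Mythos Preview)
Your proposal is correct and follows exactly the paper's approach: the paper does not even give a proof but simply writes ``We now clearly have'' after defining the monoidal structure on $\dul{QMon}(\A)$ by pulling back along the equivalence of \thref{monoidal equiv}. Your elaboration---checking that the transported tensor product makes the equivalence monoidal and then passing to $K_0$---is precisely the content the paper leaves implicit, so there is nothing to add or correct.
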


Taking the quotient of $K_0(\dul{QMon}(\A))$ by the subgroup induced by the 2-coboundaries will lead us to the isomorphism with the second cohomology group.
This subgroup is given by quasi-monoidal structures $(\boxdot, a)$, where $\boxdot$ is induced by some $\Pp\in\dul{\Pic}(\A\Del_{\C}\A)$ and $a$ is induced by
some $\alpha$ which induces $\beta$ by \equref{beta}, and moreover $\Pp$ and $\beta$ are such that there is an equivalence $F: \Pp\to\delta_1(\N)$ in $\A\Del_{\C}\A\x\Mod$
for some $\N\in\dul{\Pic}(\A)$, so that $\beta\simeq\lambda_{\N}\comp\delta_2(F)$. Observe that $\delta_1(\N)=\N_0\Del_{\A\Del_{\C}\A}\N_1^{op}\Del_{\A\Del_{\C}\A}\N_2=
(\A\Del_{\C}\N)\Del_{\A\Del_{\C}\A}\N_1^{op}\Del_{\A\Del_{\C}\A}(\N\Del_{\C}\A)\simeq(\N\Del_{\C}\N)\Del_{\A\Del_{\C}\A}\N_1^{op}$.
If we denote by $QMon(\A)$ the quotioent of $K_0(\dul{QMon}(\A))$ by the described subgroup, we clearly have:

\begin{thm}
There is an isomorphism of groups:
$$QMon(\A)\iso H^2(\A,\dul{\Pic}).$$
\end{thm}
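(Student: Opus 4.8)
The plan is to deduce the statement from the group isomorphism $\varphi\colon K_0(\dul{QMon}(\A))\xrightarrow{\sim}Z^2(\A,\dul{\Pic})$ established above, by checking that $\varphi$ carries the subgroup defining $QMon(\A)$ onto $B^2(\A,\dul{\Pic})$; it will then descend to the quotients. Recall that $\varphi$ is just $K_0$ applied to the monoidal equivalence $\Phi\colon\dul{QMon}(\A)\xrightarrow{\simeq}\dul{Z}^2(\A,\dul{\Pic})$ of \thref{monoidal equiv}, which sends a quasi-monoidal structure $(\A\x\Mod,\boxdot,a)$ to the pair $(\Pp,\beta)$ with $\Pp=\A\boxdot\A$ and $\beta$ as in \equref{beta} (for $\alpha=a_{\A,\A,\A}$), and a morphism $(\Id,\Psi)$ to $F=\Psi_{\A,\A}$.

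First I would make the two subgroups explicit. On the cohomology side, $B^2(\A,\dul{\Pic})\subseteq Z^2(\A,\dul{\Pic})$ is by definition the subgroup of classes of the objects $d_1(\N)=(\delta_1(\N),\lambda_{\N})$, $\N\in\dul{\Pic}(\A)$. On the quasi-monoidal side, the subgroup $S\subseteq K_0(\dul{QMon}(\A))$ by which one quotients to form $QMon(\A)$ is, by the description preceding the statement, generated by the classes of those $(\A\x\Mod,\boxdot,a)$ whose associated pair $(\Pp,\beta)$ admits an equivalence $F\colon\Pp\to\delta_1(\N)$ in $\A\Del_{\C}\A\x\Mod$ with $\beta\simeq\lambda_{\N}\comp\delta_2(F)$ for some $\N\in\dul{\Pic}(\A)$. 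The key observation is that this last condition says precisely that $F$ is a morphism $(\Pp,\beta)\to(\delta_1(\N),\lambda_{\N})=d_1(\N)$ in $\dul{Z}^2(\A,\dul{\Pic})$; being an equivalence, $F$ is an isomorphism there. Hence $\Phi$ sends the ``coboundary'' quasi-monoidal structures precisely to the objects of $\dul{Z}^2(\A,\dul{\Pic})$ isomorphic to one in the essential image of $d_1$, and conversely every $d_1(\N)$ is hit by the essential surjectivity of $\Phi$ proved in \thref{monoidal equiv}. Since $\Phi$ is a monoidal equivalence, after applying $K_0$ it identifies $S$ with the subgroup of $Z^2(\A,\dul{\Pic})$ generated by the classes of the $d_1(\N)$, that is $\varphi(S)=B^2(\A,\dul{\Pic})$.

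It then follows that $\varphi$ induces an isomorphism of groups
$$QMon(\A)=K_0(\dul{QMon}(\A))/S\xrightarrow{\sim}Z^2(\A,\dul{\Pic})/B^2(\A,\dul{\Pic})=H^2(\A,\dul{\Pic}),$$
which is the claim. The only delicate point, and the one I expect to be the main (if modest) obstacle, is the bookkeeping in the previous paragraph: one must verify that $\Phi$ maps the coboundary structures not merely into, but bijectively onto the isomorphism classes in the essential image of $d_1$, and that this correspondence of object classes really is $\varphi$ restricted to $S$ once Grothendieck groups are formed. This relies on the fact that $\dul{QMon}(\A)$ was equipped with exactly the monoidal structure pulled back along $\Phi$, so that $\Phi$ is monoidal and $K_0\Phi=\varphi$.
\qed
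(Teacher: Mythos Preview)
Your proposal is correct and is essentially the same argument as the paper's. The paper gives no explicit proof of this theorem, treating it as an immediate consequence (``we clearly have'') of the preceding isomorphism $K_0(\dul{QMon}(\A))\iso Z^2(\A,\dul{\Pic})$ together with the fact that the subgroup defining $QMon(\A)$ was \emph{defined} precisely so as to correspond under $\Phi$ to the coboundaries; you have simply written out the passage to quotients in detail.
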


\bibliographystyle{amsalpha}

\end{document}